\documentclass[11pt,a4paper]{amsart}
\usepackage[utf8]{inputenc} 
\usepackage{tikz}
\usepackage{tikz-cd}
\usepackage{geometry}
\geometry{
	a4paper,
	left=30mm,
	top=30mm,
	bottom=30mm,
	right=30mm,
}
\usetikzlibrary{decorations.markings,decorations.pathmorphing,cd}
\usetikzlibrary{arrows}
\usetikzlibrary{calc}
\usepackage{pgfplots}
\pgfplotsset{every axis/.append style={
                    axis x line=middle,    
                    axis y line=middle,    
                    axis line style={-,color=blue}, 
                    xlabel={$x$},          
                    ylabel={$y$},          
            }}

\usepackage{amsmath}
\usepackage{amsthm}
\usepackage{amsfonts}
\usepackage{amssymb}
\usepackage{amsrefs}
\usepackage{enumerate}
\usepackage{enumitem}
\usepackage{graphicx}
\usepackage{hyperref}
\usepackage{nicefrac}
\usepackage{cancel}
\usepackage{colortbl}

\definecolor{grisclarito}{rgb}{.9,.9,.9}

\DeclareMathOperator{\orb}{orb}

\DeclareMathOperator{\Hom}{Hom}

\DeclareMathOperator{\Deck}{Deck}
\DeclareMathOperator{\im}{Im}

\def\cP{{\mathcal P}}

\def\cC{\mathcal{C}}

\def\A{\mathbb{A}}
\def\ZZ{\mathbb{Z}}
\def\TT{\mathbb{T}}
\def\BB{\mathbb{B}}

\def\CC{\mathbb{C}}

\def\QQ{\mathbb{Q}}
\def\FF{\mathbb{F}}
\def\QQ{\mathbb{Q}}

\def\RR{\mathbb{R}}

\def\PP{\mathbb{P}}
\def\GG{\mathbb{G}}

\def\j{s}

\def\k{k}

\newcommand{\one}{\mathbf{1}}
\def\cco{{\sc{cco}}}
\def\NINF{{\sc{ninf}}}

\def\rightmap#1{\smash{\mathop{\rightarrow}\limits^{#1}}}

\newtheorem{thm}{Theorem}[section]  
\newtheorem{main-thm}{Theorem}  
\newtheorem{prop}{Proposition}[section]%
\newtheorem{main-conj}{Conjecture}[section]%
\newtheorem{cor}{Corollary}[section]
\newtheorem{lemma}{Lemma}[section]

\theoremstyle{remark}
\newtheorem{rem}{Remark}[section]

\theoremstyle{definition}
\newtheorem{dfn}{Definition}[section]
\newtheorem{exam}{Example}[section]
\newtheorem{lst}{List}[section]
\newtheorem{problem}{Problem}[section]

\numberwithin{figure}{section}

\makeatletter
\let\c@lemma\c@thm
\let\c@prop\c@thm
\let\c@propdef\c@thm
\let\c@proper\c@thm
\let\c@problem\c@thm
\let\c@conj\c@thm
\let\c@cor\c@thm
\let\c@rem\c@thm
\let\c@dfn\c@thm
\let\c@notation\c@thm
\let\c@exam\c@thm
\makeatother

\title
{Geometric realizability of epimorphisms to curve orbifold groups}

\author[Jos\'e I. Cogolludo-Agust{\'i}n and Eva Elduque]{J.I.~Cogolludo-Agust{\'i}n and Eva Elduque}
\address{Departamento de Matem\'aticas, IUMA\\
Universidad de Zaragoza \\
C.~Pedro Cerbuna 12 \\
50009 Zaragoza, Spain.} 
\email{jicogo@unizar.es} 

\address{Departamento de Matem\'aticas, ICMAT\\ 
Universidad Aut\'onoma de Madrid \\
28049 Madrid, Spain.}
\email{eva.elduque@uam.es}

\begin{document}

\thanks{The authors are partially supported by PID2020-114750GB-C31, funded by 
MCIN/AEI/10.13039/501100011033. The first author is also partially funded by the Departamento de Ciencia, 
Universidad y Sociedad del Conocimiento of the Gobierno de Arag\'on 
(Grupo de referencia E22\_20R ``\'Algebra y Geometr\'{\i}a''). 
The second author is partially supported by the Ram\'on y Cajal Grant RYC2021-031526-I funded by
MCIN/AEI/10.13039/501100011033 and by the European Union NextGenerationEU/PRTR
}

\subjclass[2020]{Primary 32S25, 32S20, 14F35, 14C21; Secondary 32S50, 20H10}

\maketitle

\begin{abstract}
\begin{sloppypar}
	Given a connected dense Zariski open set of a compact K\"ahler manifold~$U$, we address the general problem of the existence of surjective holomorphic maps ${F:U\to C}$ to smooth complex quasi-projective curves from properties of
	$\pi_1(U)$. It is known that, if such $F$ exists, then there exists a finitely generated normal subgroup $K\trianglelefteq\pi_1(U)$ such that $\pi_1(U)/K$ is isomorphic to a curve orbifold group $G$ (i.e. the orbifold fundamental group of a smooth complex quasi-projective curve endowed with an orbifold structure). In this paper, we address the converse of that statement in the case where the orbifold Euler characteristic of $G$ is negative, finding a (unique) surjective holomorphic map $F:U\to C$ which realizes the quotient
	$\pi_1(U)\twoheadrightarrow \pi_1(U)/K\cong G$ at the level of (orbifold) fundamental groups.
	We also prove that our theorem is sharp, meaning that the result does not hold for any curve orbifold group
	with non-negative orbifold Euler characteristic.
	Furthermore, we apply our main theorem to address Serre's question of which orbifold fundamental groups of smooth
	quasi-projective curves can be realized as fundamental groups of complements of curves in~$\PP^2$.
\end{sloppypar}
\end{abstract}


\section{Introduction}

Unless otherwise stated, every variety appearing in this paper is over $\CC$, every complex analytic variety is assumed to be connected and reduced, and every Zariski open set inside of a complex analytic variety is assumed to be non-empty.

Consider the following
well-known problem, which we will refer to as the Geometric Realizability Problem.

\begin{problem}[Geometric Realizability Problem]
\label{pbm:geometricmorphism}
Let $U$ be a Zariski open set of a compact K\"ahler manifold, let $u\in U$, and let $\psi:\pi_1(U,u)\to G$
be an epimorphism  with finitely generated kernel, where $G$ is a group which is isomorphic to the fundamental
group of a smooth quasi-projective curve.

Determine under what conditions
there exists an admissible map (see Definition~\ref{dfn:admissible} below)
$F:U\to C$ to a smooth quasi-projective curve $C$ realizing $\psi$, that is, such that $\psi$ and
$$F_*:\pi_1(U,u)\to \pi_1(C, F(u))$$ coincide up to isomorphism in the target.
\end{problem}

Related questions regarding the existence of maps from smooth varieties onto curves from
properties of their fundamental groups have been considered by other authors
(cf.~\cite{Arapura-fundamentalgroups,Arapura-geometry,ACM-characteristic,Bauer-irrational,Catanese-Fibred,Green-Lazarsfled-higher,Gromov-fundamental,Jost-Yau-Harmonic,Hillman-Complex})

The purpose of this paper is to study a generalization of the Geometric Realizability Problem
(see Problem~\ref{pbm:orbifoldgeometricmorphism} below) to a broader class of groups.
Our main motivation is the study of quasi-projective groups, that is, fundamental groups of smooth
quasi-projective varieties. Indeed, smooth quasi-projective varieties have smooth projective compactifications,
and thus they are Zariski open sets inside of compact K\"ahler manifolds.

The morphisms to curves appearing in Problem~\ref{pbm:geometricmorphism}
are admissible maps in the following sense (see~\cite{Arapura-geometry}).

\begin{dfn}[Admissible map]\label{dfn:admissible}
	Let $U$ be a Zariski open set inside a compact K\"ahler manifold. Let $C$ be a smooth quasi-projective curve. A map $f:U\to C$ is called \emph{admissible} if it satisfies the following
	conditions:
	\begin{itemize}
		\item It is holomorphic and surjective.
		\item It has a holomorphic enlargement $\widehat f:\widehat X\to \overline C$ with connected fibers, where 
		$\widehat X$ is a K\"ahler compactification of $U$ such that $\widehat X\setminus U$ is a simple
		normal crossings divisor and $\overline C$ is a smooth compactification of $C$.
	\end{itemize} 
\end{dfn}

\begin{rem}[Geometric Realizability Problem in the algebraic setting]
	\label{rem:GMPalgebraic}
	Let $C$ be a smooth quasi-projective curve. If $U$ is a smooth quasi-projective variety, any admissible map
	$f:U\to C$ is algebraic by Theorem~\ref{thm:holomorphicExt} below and by \cite[Thm. 8.5]{Shafarevich2}. Conversely,
	any surjective algebraic morphism $f:U\to C$ with connected generic fibers is admissible. Thus, in the
	quasi-projective setting, Problem~\ref{pbm:geometricmorphism} asks about the algebro-geometric realization of
	epimorphisms of quasi-projective groups to curve groups with finitely generated kernel.
\end{rem}

\begin{rem}
	Let us clarify the use of the phrase ``up to isomorphism in the target'' in Problem~\ref{pbm:geometricmorphism}.
	Note that $G$ is an abstract group which is not canonically identified with the fundamental group of a particular
	curve with respect to a particular base point. Also note that not any identification between $G$ and the
	fundamental group of a curve will yield an algebraically realizable epimorphism. For example, the general curve
	of genus $g\geq 3$ is automorphism free. However, the extended mapping class group of a smooth projective curve
	$C$ of genus $g\geq 3$ (which is isomorphic to $\mathrm{Out}(\pi_1(C))$ by the Dehn-Nielsen-Baer theorem) is an
	infinite group. Thus, there are automorphisms of the fundamental group of a general curve of genus $g\geq 3$
	which cannot be realized algebraically.
\end{rem}

Since the spaces are path connected, a change of base point in $U$ induces isomorphisms that are compatible
with the geometric homomorphism $F_*$, so there is no need to specify the base point in the
Geometric Realizability Problem~\ref{pbm:geometricmorphism} (see Remark~\ref{rem:basepoint}).

Let $C_{g,r}$ denote a smooth quasi-projective curve of genus $g$ with $r$ punctures.
The following result, due to Catanese \cite{Catanese-Fibred}, which builds on contributions
by other authors (see~\cite{Siu-strong,Bauer-irrational}), summarizes the state of the art regarding the
Geometric Realizability Problem~\ref{pbm:geometricmorphism}
for compact K\"ahler manifolds with respect to fundamental groups of compact curves $C_{g,0}$ of general
type ($g\geq 2$) and for proper Zariski open sets in compact K\"ahler manifolds with respect to free groups of
rank~$s\geq 2$.

\begin{thm}[Catanese]\label{thm:summaryCatanese}
Let $U$ be a compact K\"ahler manifold (resp. a proper Zariski open set in a compact K\"ahler manifold) and
let $\psi:\pi_1(U)\to G$ be an epimorphism with finitely generated kernel, where $G$ is the fundamental group
of a smooth projective curve of genus $g\geq 2$ (resp. a free group $\FF_s$ with $s\geq 2$). Then there
exists an admissible map $F:U\to C_{g}$, where $C_{g}$ is a smooth projective curve of genus $g$ (resp.
$F:U\to C_{g,r}$, where $C_{g,r}$ is a smooth quasi-projective curve of genus $g$ with $r=s+1-2g$ points removed)
with no multiple fibers such that $\psi$ coincides with $F_*$ up to isomorphism in the target.
\end{thm}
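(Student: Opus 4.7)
The strategy in both cases is first to produce a holomorphic map $F:U\to C$ realizing $\psi$ up to post-composition with holomorphic maps of curves, using Siu's harmonic map technique in the compact case and its logarithmic generalization due to Bauer in the quasi-projective case; and then to use the finite generation of $\ker\psi$ to pin down the target curve and to rule out multiple fibers.

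For the compact K\"ahler case with $G=\pi_1(C_g)$, $g\geq 2$, I would invoke Siu's theorem: since $C_g$ carries a hyperbolic K\"ahler metric, the classifying map $U\to C_g=K(\pi_1(C_g),1)$ induced by $\psi$ is homotopic to a harmonic map, which by Siu's rigidity is holomorphic (or antiholomorphic, in which case I compose with complex conjugation on $C_g$ after a choice of complex structure). Taking the Stein factorization yields a surjective holomorphic map $F_0:U\to C'$ with connected fibers together with a holomorphic map $h:C'\to C_g$ through which $\psi$ factors at the level of fundamental groups. Surjectivity of $\psi$ and the finite generation of $\ker\psi$ force $h$ to be an isomorphism, so $F:=F_0$ works and $F_*=\psi$ up to isomorphism of the target.

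For the quasi-projective case with $G=\FF_s$, $s\geq 2$, I would use Bauer's theorem on irrational pencils on non-compact algebraic manifolds. Fix a smooth K\"ahler compactification $U\subset\widehat X$ with simple normal crossing boundary $D$, and realize $\FF_s\cong\pi_1(C_{g,r})$ with $r=s+1-2g$ for some $g\geq 0$. Bauer's framework produces an isotropic subspace of closed logarithmic $1$-forms in $H^0(\widehat X,\Omega^1_{\widehat X}(\log D))$ of the correct dimension, pulled back from $\overline{C_{g,r}}$, and the associated logarithmic Castelnuovo--de Franchis type theorem produces the admissible map $F:U\to C_{g,r}$ realizing $\psi$ up to isomorphism of the target.

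The technical heart of the proof, and the content of Catanese's contribution on top of the earlier work of Siu and Bauer, is to upgrade these constructions so that $F_*=\psi$ exactly (up to automorphism of $G$) and $F$ has no multiple fibers. The finite generation of $\ker\psi$ is used to rule out both factoring $F$ through a nontrivial finite cover $\widetilde C\to C$ and the appearance of a multiple fiber of multiplicity $m>1$: the latter would naturally force $F_*$ to factor through the orbifold group $\pi_1^{\mathrm{orb}}(C,\underline{m})$, introducing $m$-th power relations incompatible with mapping to $G=\pi_1(C)$ via a surjection with finitely generated kernel. Making this rigidity argument precise, in a way uniform across the compact and open settings, is the main obstacle I would expect to spend the most effort on.
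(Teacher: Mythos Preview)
Your plan follows Catanese's original route (Siu's harmonic map rigidity in the compact case, Bauer's logarithmic isotropic subspace theorem in the open case), and that is a legitimate way to prove the statement. However, the present paper does not reprove Theorem~\ref{thm:summaryCatanese} this way: it merely cites \cite{Catanese-Fibred} for the statement, and then in Theorem~\ref{thm:mainNoMultiple} gives an independent proof (in fact of a slight generalization) that deliberately avoids the isotropic subspace theorem. Instead, the paper uses Arapura's structure theorem for characteristic varieties (Proposition~\ref{prop:Arapura}): the epimorphism $\psi$ embeds $(\CC^*)^l=\Sigma_1(G)$ into $\Sigma_1(U)$, so there is a positive-dimensional component $W\ni 1$ of $\Sigma_1(U)$, and Arapura furnishes an admissible map $F:U\to C_{g',r'}$ with $W=F^*H^1(C_{g',r'},\CC^*)$. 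This single step replaces both your harmonic-map construction and your logarithmic Castelnuovo--de Franchis argument, and it treats the compact and non-compact cases uniformly.

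Where your sketch is genuinely incomplete is the last step, which you yourself flag: showing that $F_*$ equals $\psi$ up to isomorphism and that $F$ has no multiple fibers. The paper handles this cleanly via the \NINF\ machinery. First one shows $F_*(K)=\{1\}$ in $\pi_1(C_{g',r'})$ by a character-variety argument (if $F_*(K)$ were nontrivial it would have finite index by \NINF, and then pulling back a non-torsion character of $C_{g',r'}$ along $K\hookrightarrow\pi_1(U)$ contradicts $\psi\circ\iota=1$). This forces $F_*^{\orb}(K)$ to lie in the kernel of $\pi_1^{\orb}(C_{g',(r',\bar m)})\twoheadrightarrow\pi_1(C_{g',r'})$, hence to have infinite index, hence to be trivial by \NINF\ for curve orbifold groups (Lemma~\ref{lem:FgroupNINF}). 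Then Lemma~\ref{lem:NINF} identifies $F_*^{\orb}$ with $\psi$, and Theorem~\ref{thm:CCOnotFreeProduct} reads off $g'=g$, $r'=r$, $\bar m=\emptyset$, giving the absence of multiple fibers. Your heuristic that a multiple fiber ``introduces $m$-th power relations incompatible with mapping to $G$'' is the right intuition, but the actual mechanism is this \NINF/rigidity-of-invariants package, not an ad hoc relation count.
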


The proof if $U$ is compact can be found in \cite[Theorem 4.3]{Catanese-Fibred}, and the non-compact case
follows from \cite[Lemma 3.2, Theorem 5.4]{Catanese-Fibred}. Note that Theorem~\ref{thm:summaryCatanese}
does not address the Geometric Realizability Problem~\ref{pbm:geometricmorphism} in the mixed case
where $U$ is non-compact and $G$ is the fundamental group of a smooth projective curve of genus $g\geq 2$,
where an epimorphism $\psi:\pi_1(U)\to G$ with finitely generated kernel could in principle be realized by an
admissible map. Also note that Theorem~\ref{thm:summaryCatanese} does not provide a relationship between the
existence of surjective algebraic morphisms to curves which have multiple fibers and certain quotients of~$\pi_1(U)$.

The goal of this paper is to address the following generalization of the Geometric Realizability
Problem~\ref{pbm:geometricmorphism}, which uses the language of orbifold fundamental groups and orbifold morphisms.
This generalization includes morphisms with multiple fibers and morphisms from non-compact manifolds to compact
curves.

\begin{problem}[Orbifold Geometric Realizability Problem]
\label{pbm:orbifoldgeometricmorphism}
Let $U$ be a (non-empty, connected) Zariski open subset of a compact K\"ahler manifold and let
$\psi:\pi_1(U)\to G$ be an epimorphism with finitely generated kernel, where $G$ is a group which is
isomorphic to the orbifold fundamental group of a smooth quasi-projective curve with an extra orbifold structure.

Determine under what conditions there exists an admissible map
$F:U\to C$ to a smooth quasi-projective curve $C$ realizing $\psi$, that is, such that if $C$ is endowed with
its maximal orbifold structure with respect to $F$, $\psi$ and $F_*:\pi_1(U)\to G=\pi_1^{\orb}(C)$ coincide up
to isomorphism in the target.
\end{problem}

By an orbifold structure on a smooth quasi-projective curve we simply mean a choice of a finite number of points
$M_+=\{P_1,...,P_n\}\subset C$ and an integer label $m_i\geq 2$ at each point $P_i\in M_+$.
The orbifold fundamental group $\pi_1^{\orb}(C)$ is defined based on these choices (see Definition~\ref{dfn:orbipi1}
and Remark~\ref{rem:orbipi1open}).
A morphism from a quasi-projective manifold $U$ to an orbifold curve $C$ is a way to encode the
multiple fibers of $F$, that is, $F$ is required to have a multiple fiber at $P_i$ of multiplicity
(a multiple of) $m_i$ (see Definition~\ref{dfn:orbimorphism}). The finitely generated kernel assumption
on $\psi$ is necessary for the existence of one such $F$, as seen in Remark~\ref{rem:fg}.

\begin{rem}\label{rem:converse}
It is known that, if $F:U\to C$ is an admissible map to a smooth quasi-projective curve, then
there exists a finitely generated normal subgroup $K\trianglelefteq\pi_1(U)$ such that
$\pi_1(U)/K\cong\pi_1^{\orb}(C)$ for some orbifold structure on $C$
(see Remarks~\ref{rem:SteinAdmissible},~\ref{rem:inducedorb} and~\ref{rem:fg} in the preliminaries
Section~\ref{s:preliminaries}). The Orbifold Geometric Realizability Problem~\ref{pbm:orbifoldgeometricmorphism}
addresses not only the converse of this statement, but also whether or not the quotient map
$\pi_1(U)\twoheadrightarrow \pi_1(U)/K$ is realized by an admissible map to a curve.
\end{rem}

If $G$ is a finitely generated free product of cyclic groups (i.e. the orbifold fundamental group of a
non-compact curve), Problem~\ref{pbm:orbifoldgeometricmorphism} was addressed in \cite{ji-Eva-orbifold} by the authors in the case
where $\psi$ is an isomorphism.

\begin{thm}[{\cite[Theorem 1.3, Remark 1.5]{ji-Eva-orbifold}}]\label{thm:ji-Eva}
	Let $U$ be a connected proper Zariski open set in a compact K\"ahler manifold. Let $s,n\geq 0$, $m_1,\ldots,m_n\geq 2$. Suppose that $\pi_1(U)\cong\FF_s*\ZZ_{m_1}*\ldots*\ZZ_{m_n}$, and that $\pi_1(U)$ is an infinite group. Then, there exists a smooth quasi-projective curve $C$ of genus $g$ with $r$ points removed and an admissible map $F:U\to C$ such that
	\begin{itemize}
		\item $2g+r-1=s$,
		\item $F$ has exactly $n$ multiple fibers, with multiplicities $m_1,\ldots,m_n$.
		\item $F_*:\pi_1(U)\to\pi_1^{\orb}(C)$ is an isomorphism, where $C$ is endowed with its maximal
		orbifold structure with respect to $F$, corresponding to the tuple of multiplicities $\bar m=(m_1,\ldots,m_n)$.
	\end{itemize}
\end{thm}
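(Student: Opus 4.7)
\emph{Proof sketch.} The plan is to reduce to Theorem~\ref{thm:summaryCatanese} by passing to a finite étale cover of $U$ that trivializes the torsion of $\pi_1(U)$, and then descending the resulting admissible map along the deck transformation action. Set $G := \FF_s * \ZZ_{m_1} * \cdots * \ZZ_{m_n} \cong \pi_1(U)$ and consider the natural surjection $\phi \colon G \twoheadrightarrow M := \ZZ_{m_1} \times \cdots \times \ZZ_{m_n}$ that kills the free factor and restricts to the identity on each cyclic factor. Every torsion element of $G$ is conjugate into some $\ZZ_{m_i}$, each of which injects into $M$, so $K := \ker\phi$ is torsion-free; by Kurosh's subgroup theorem, $K \cong \FF_{s'}$ for some $s'$. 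Comparing orbifold Euler characteristics yields $1 - s' = \bigl(\prod_i m_i\bigr)\bigl(1 - s - n + \sum_i 1/m_i\bigr)$. The hypothesis that $\pi_1(U)$ is infinite forces $s' \geq 1$, with $s' = 1$ only in the Euclidean cases (notably $G \cong \ZZ_2 * \ZZ_2$), which I would treat by hand—typically by factoring a generator of $H^1$ of a suitable cover through a morphism to $\CC^*$.

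Let $p \colon \tilde U \to U$ be the finite étale cover corresponding to $K$; extending $p$ to a branched cover of a good compactification of $U$ and resolving, $\tilde U$ becomes a Zariski open subset of a compact Kähler manifold (branched covers of Kähler manifolds are Kähler after desingularization). Applying Theorem~\ref{thm:summaryCatanese} to $\pi_1(\tilde U) \cong \FF_{s'}$ in the range $s' \geq 2$ produces an admissible map $\tilde F \colon \tilde U \to \tilde C = C_{\tilde g, \tilde r}$ with $2\tilde g + \tilde r - 1 = s'$ and $\tilde F_*$ an isomorphism.

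The crux is the $M$-equivariant descent. For $\gamma \in M$, the composition $\tilde F \circ \gamma$ is another admissible map inducing an isomorphism on $\pi_1$. Passing to an equivariant holomorphic enlargement $\widehat{\tilde f} \colon \widehat{\tilde X} \to \overline{\tilde C}$ so that the deck transformation extends to $\widehat{\tilde X}$, one invokes uniqueness of the Stein factorization for fibrations with connected fibers over a curve of negative Euler characteristic to conclude that $\tilde F \circ \gamma = \gamma' \circ \tilde F$ for a unique $\gamma' \in \mathrm{Aut}(\tilde C)$. This yields a homomorphism $M \to \mathrm{Aut}(\tilde C)$ covering the deck action, and one sets $C := \tilde C / M$ and defines $F\colon U = \tilde U / M \to C$ as the induced map. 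Admissibility of $F$ follows from that of $\tilde F$ after resolving the cyclic quotient singularities possibly introduced in the quotient compactification.

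Finally I would match the invariants. By Kurosh, the finite-order conjugacy classes in $G$ are exactly those of $\ZZ_{m_1}, \ldots, \ZZ_{m_n}$, so the nontrivial stabilizers of the $M$-action on $\tilde C$ are cyclic of orders $m_1, \ldots, m_n$ at $n$ distinct points; these descend to exactly $n$ multiple fibers of $F$ of the claimed multiplicities, and Riemann--Hurwitz for $\tilde C \to C$ combined with the formula for $s'$ gives $2g + r - 1 = s$. The isomorphism $F_* \colon \pi_1(U) \to \pi_1^{\orb}(C)$ follows from the five lemma applied to the morphism of short exact sequences
\[
1 \to K \to G \to M \to 1 \quad\longrightarrow\quad 1 \to \pi_1(\tilde C) \to \pi_1^{\orb}(C) \to M \to 1,
\]
in which the left vertical arrow is $\tilde F_*$ and the right is the identity. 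The principal obstacle is the rigidity underpinning the descent: extracting a sufficiently canonical form of Catanese's admissible map so that source automorphisms induce target automorphisms. Everything else is standard orbifold bookkeeping.
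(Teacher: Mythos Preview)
This theorem is quoted from \cite{ji-Eva-orbifold} rather than proved in the present paper, so there is no proof here to compare against directly. However, the paper does prove the more general Theorem~\ref{thm:GOP}, whose argument (via Lemma~\ref{lem:cover}) follows exactly the strategy you propose: pass to the finite \'etale cover of $U$ killing torsion, obtain an admissible map to a curve of general type on the cover, show the deck group acts compatibly on the target curve, and take the quotient. So your outline is correct and aligned with the paper's approach.

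Where your sketch is thin is precisely where you flag it: the rigidity needed for equivariant descent. Your appeal to ``uniqueness of the Stein factorization for fibrations with connected fibers over a curve of negative Euler characteristic'' is not the right tool---Stein factorization is unique for a \emph{given} map, but you need to compare the two distinct maps $\tilde F$ and $\tilde F\circ\gamma$ and conclude they differ by an automorphism of $\tilde C$. The paper supplies this via Arapura's Proposition~\ref{prop:Arapura}: in Lemma~\ref{lem:cover}\eqref{item:deckcharacteristic} it is shown, by tracking the deck action through the commutative square of fundamental groups and abelianizing, that $f^*$ and $(f\circ d)^*$ have the same image in $\Sigma_1(\tilde U)$; Arapura's uniqueness statement then yields the required algebraic automorphism of the target. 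This is the missing ingredient in your sketch.

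A second point: your endgame via the five lemma presupposes that the quotient $\tilde C/M$ carries exactly the orbifold structure $\bar m$, i.e.\ that the $M$-action on $\tilde C$ has stabilizers of orders $m_1,\ldots,m_n$ at $n$ points. Your justification (``by Kurosh, the finite-order conjugacy classes in $G$ are\ldots'') controls torsion in $G$, not stabilizers of the abelian group $M$ acting on $\tilde C$. The paper sidesteps this by comparing with a reference branched cover $D'\to C_{g,r}$ built from Lemma~\ref{lem:Fenchel} and then, after descent, invoking the \NINF\ property (Lemma~\ref{lem:FgroupNINF}) together with Theorem~\ref{thm:CCOnotFreeProduct} to pin down $(g,r,\bar m)$. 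Finally, note that the Euclidean cases $\ZZ$ and $\ZZ_2*\ZZ_2$ genuinely fall outside the cover-and-descend argument (the cover has $s'\le 1$, so Theorem~\ref{thm:summaryCatanese} does not apply); your ``treat by hand'' is correct in spirit but is where the actual work in \cite{ji-Eva-orbifold} lies.
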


In order to identify a class of groups for which to solve the Orbifold Geometric Realizability Problem~\ref{pbm:orbifoldgeometricmorphism}, we consider the following definition.

\begin{dfn}[Orbifold Euler characteristic]\label{dfn:orbifoldEulerChar}
	Let $C_g$ be a smooth projective curve of genus $g$, and let $C_{g,(r,\bar m)}$
	denote the orbifold structure on the curve $C_g$ with $r$ punctures and $n$ multiple points of
	multiplicities $\bar m=(m_1,\ldots,m_n)$. The orbifold Euler characteristic of $C_{g,(r,\bar m)}$
	is the quantity
	$$\chi_{g,(r,\bar m)}:=2-2g-r-\sum_{i=1}^{n}\left( 1-\frac{1}{m_i}\right).$$
\end{dfn}

Let $g,r,n\geq 0$, and let $\GG_{g,(r,\bar m)}$ be the orbifold fundamental group of $C_{g,(r,\bar m)}$.
We refer to these groups as \emph{curve orbifold groups}. The class of curve orbifold groups
contains interesting families of groups, such as all finitely generated free products of cyclic groups
(for $r\geq 1$, see Remark~\ref{rem:freeProduct}) and all triangle groups (for $g=0$, $r=0$ and $n=3$,
see Remark~\ref{rem:triangle}).

\begin{rem}\label{rem:chiInvariant}
	If $\GG_{g,(r,\bar m)}$ is not a finite cyclic group, the quantity $\chi_{g,(r,\bar m)}$ is an invariant
	of the isomorphism class of the group $\GG_{g,(r,\bar m)}$. This follows from  Remark~\ref{rem:freeProduct} and
	Theorem~\ref{thm:CCOnotFreeProduct}. Moreover, if $\GG_{g,(r,\bar m)}$ is a finite cyclic group, then
	$\chi_{g,(r,\bar m)}>0$ (see List~\ref{list}). Hence, the sign of $\chi_{g,(r,\bar m)}$ is an invariant
	of the isomorphism class of the group $\GG_{g,(r,\bar m)}$.
\end{rem}

The main result of the paper is the following theorem (see Theorem~\ref{thm:GOP} for more details), which unifies and extends Theorems~\ref{thm:summaryCatanese} and \ref{thm:ji-Eva}.
\begin{thm}\label{thm:GOPintro}
The Orbifold Geometric Realizability Problem~\ref{pbm:orbifoldgeometricmorphism} has a positive answer for the curve orbifold groups $\GG_{g,(r,\bar m)}$ such that
$\chi_{g,(r,\bar m)}<0$. Moreover, the multiplicities of the multiple fibers of $F$ coincide with the entries of $\bar m$, and one such $F$ is uniquely determined up to isomorphism of algebraic varieties in the target.
\end{thm}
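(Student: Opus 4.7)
The strategy is to reduce to Catanese's Theorem~\ref{thm:summaryCatanese} by passing to a torsion-free finite-index subgroup of $G=\GG_{g,(r,\bar m)}$. The assumption $\chi_{g,(r,\bar m)}<0$ makes $G$ a Fuchsian-type group (a cocompact lattice in $\mathrm{PSL}_2(\RR)$ when $r=0$, virtually free of rank $\geq 2$ when $r\geq 1$), hence residually finite, so a torsion-free normal subgroup $H\trianglelefteq G$ of finite index exists. Set $\Gamma=G/H$ and let $\tilde C\to C$ be the corresponding Galois orbifold cover with deck group $\Gamma$. Because $H$ is torsion-free, $\tilde C$ carries no orbifold structure and is an honest smooth quasi-projective curve with $\pi_1(\tilde C)\cong H$; by orbifold Riemann--Hurwitz, $\chi(\tilde C)=|\Gamma|\cdot\chi_{g,(r,\bar m)}<0$, so $\tilde C$ is either projective of genus $\geq 2$ or has non-abelian free fundamental group.

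Next I would pull $U$ back along $\psi$: let $\tilde U\to U$ be the \'etale cover corresponding to the finite-index normal subgroup $\psi^{-1}(H)\trianglelefteq\pi_1(U)$. It inherits a K\"ahler compactification $\widetilde{\widehat X}$ with simple normal crossings boundary from $\widehat X\supset U$, and the restricted epimorphism $\tilde\psi\colon\pi_1(\tilde U)\twoheadrightarrow H=\pi_1(\tilde C)$ still has the finitely generated kernel $K=\ker\psi$. Applying Theorem~\ref{thm:summaryCatanese} to $\tilde\psi$ produces an admissible map $\tilde F\colon\tilde U\to\tilde C$ realizing $\tilde\psi$, unique up to automorphism of $\tilde C$. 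In the ``mixed'' case ($\tilde U$ non-compact, $\tilde C$ projective of genus $\geq 2$) not covered by Catanese, I would first show that $\tilde\psi$ factors through $\pi_1(\widetilde{\widehat X})$: every meridian around a boundary divisor of $\widetilde{\widehat X}\setminus\tilde U$ must lie in $K$, because otherwise it would produce a non-trivial infinite cyclic subgroup of the torsion-free surface-type group $H$ that a local analysis at the boundary, together with the finite generation of $K$, should preclude. Catanese's compact statement applied to $\widetilde{\widehat X}$ then supplies $\tilde F$ as the restriction of a fibration.

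Finally, I would make $\tilde F$ equivariant and descend. The deck action of $\Gamma$ on $\tilde U$ interacts with its Galois action on $\tilde C$ via the outer action of $\Gamma$ on $H$ coming from its conjugation in $G$, and the uniqueness of $\tilde F$ up to $\mathrm{Aut}(\tilde C)$ yields a homomorphism $\sigma\colon\Gamma\to\mathrm{Aut}(\tilde C)$ with $\tilde F\circ\gamma=\sigma_\gamma\circ\tilde F$; after correcting $\tilde F$ by a suitable automorphism of $\tilde C$, $\sigma$ coincides with the Galois action, so $\tilde F$ descends to an admissible map $F\colon U\to C$. The multiple fibers of $F$ are the images of the branch points of $\tilde C\to C$, with ramification indices equal to $m_1,\dots,m_n$ by construction of $H$; hence $F$ realizes $\psi$ with respect to the maximal orbifold structure on $C$, and uniqueness up to isomorphism in the target follows by applying the same descent to Catanese's uniqueness of the lifts of any alternative realization. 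The hardest step is the mixed case: ruling out non-trivial meridian images under $\tilde\psi$ is the only place where the negativity $\chi_{g,(r,\bar m)}<0$ enters essentially, via the hyperbolicity of the surface-type group $H$.
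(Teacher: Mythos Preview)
Your overall architecture---pass to a torsion-free finite-index normal subgroup, realize the cover by an admissible map without multiple fibers, then descend via equivariance---is exactly the route the paper takes. However, there is a genuine gap at the step you yourself flag as hardest.

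\textbf{The mixed case.} Your proposed argument that every boundary meridian of $\tilde U$ must land in $K$ does not work as stated. A meridian whose image in $H\cong\GG_{g'}$ is non-trivial simply generates an infinite cyclic subgroup of a surface group of genus $g'\geq 2$; such subgroups are ubiquitous, and neither ``local analysis at the boundary'' nor finite generation of $K$ gives any obstruction to this. There is no a priori reason for $\tilde\psi$ to factor through $\pi_1(\widetilde{\widehat X})$, and in fact in the eventual realization the factorization through the compactification is a \emph{consequence} of the existence of $\tilde F$, not an input to it. The paper resolves this by abandoning Catanese's isotropic-subspace argument altogether and reproving the base case (their Theorem~\ref{thm:mainNoMultiple}) via Arapura's structure theorem for characteristic varieties (Proposition~\ref{prop:Arapura}): the pullback $\tilde\psi^*$ embeds $\Sigma_1(H)\cong(\CC^*)^{2g'}$ into $\Sigma_1(\tilde U)$, forcing a positive-dimensional component through $1$, which Arapura identifies with $F^*(H^1(C',\CC^*))$ for an admissible $F:\tilde U\to C'$ of general type. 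One then shows $F_*(K)=1$ by a short characteristic-variety argument using \NINF, and the \NINF\ property of curve orbifold groups (Lemma~\ref{lem:FgroupNINF}) promotes this to $F_*^{\orb}(K)=1$. This works uniformly whether $\tilde U$ is compact or not.

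\textbf{Descent.} Your plan to correct $\tilde F$ so that $\sigma$ matches a predetermined Galois action on a fixed $\tilde C$ is more delicate than necessary. The paper instead uses Arapura's uniqueness to get the compatible $\Gamma$-action on the \emph{constructed} target $D$ (Lemma~\ref{lem:cover}\eqref{item:algebraic}), forms the quotient curve $D/\Gamma$, normalizes and Stein-factorizes to obtain an admissible $F:U\to C$, and then identifies $\ker F_*$ with $K$ a posteriori using the generic-fiber exact sequence (Lemma~\ref{lem:exact}) together with the fact that $\GG_{g,(r,\bar m)}$ has no non-trivial finite normal subgroups. Uniqueness is handled by lifting two candidate realizations to the cover (Lemma~\ref{lem:Steincover}) and invoking the uniqueness in Theorem~\ref{thm:mainNoMultiple}. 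Your equivariance sketch is in the right spirit but would need these ingredients to be made precise.
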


Note that, unlike Theorem~\ref{thm:GOPintro}, Theorem~\ref{thm:ji-Eva} also includes the cases where the orbifold fundamental group is isomorphic to $\ZZ$ or to $\ZZ_2*\ZZ_2$ (which have $0$ orbifold Euler characteristic), but it needs the extra assumption that $\psi$ is an isomorphism.

\begin{rem}
	All the curve orbifold groups satisfying that $\chi_{g,(r,\bar m)}<0$ are infinite groups (see Remarks~\ref{rem:freeProduct} and~\ref{rem:infinite}).
	 The condition $\chi_{g,(r,\bar m)}<0$ is quite general, see List~\ref{list} in Section~\ref{ss:cogroups} for the list of groups $\GG_{g,(r,\bar m)}$ not satisfying that
	$\chi_{g,(r,\bar m)}<0$ if $r=0$, and Remark~\ref{rem:freeProduct} if $r\geq 1$. 
	
	In particular, $\chi_{g,(r,\bar m)}<0$ for every finitely generated free product of cyclic groups other than $\ZZ$,
	$\ZZ_2*\ZZ_2$, or the finite cyclic groups. Also, $\chi_{g,(r,\bar m)}<0$ for every hyperbolic triangle group (the triangle groups not mentioned in List~\ref{list}).
\end{rem}

\begin{rem}\label{rem:general_type}
	Note that, if $C$ is a smooth quasi-projective curve of genus $g$ with $r$ punctures without an
	orbifold structure ($\bar m=\emptyset$), then the condition $\chi_{g,(r,\emptyset)}=\chi(C)<0$ is known in
	the literature as a curve of \emph{general type}.
\end{rem}

The structure of this paper is as follows. Section~\ref{s:preliminaries} contains some preliminaries in both geometry (K\"ahler manifolds, characteristic varieties, orbifolds and the exact sequence in fundamental groups associated to an admissible map) and group theory (curve orbifold groups and the \NINF\ property) which are necessary for the proof of the main theorem. As we already mentioned, the main result of this paper, Theorem~\ref{thm:GOPintro}, yields that the Orbifold Geometric Realizability Problem~\ref{pbm:orbifoldgeometricmorphism} has a positive answer for curve orbifold groups with negative orbifold Euler characteristic. This is proved in Section~\ref{s:main} (Theorem~\ref{thm:GOP}). 

In Section~\ref{s:sharp} we show that the main theorem is sharp, in the sense that the Orbifold Geometric Realizability Problem~\ref{pbm:orbifoldgeometricmorphism} does not have a positive answer for any curve orbifold group with non-negative orbifold Euler characteristic. More concretely, for each curve orbifold group $G$ with non-negative orbifold Euler characteristic, we find a quasi-projective variety $U$ and an epimorphism $\psi:\pi_1(U)\to G$ with finitely generated kernel which cannot be realized geometrically by an admissible map. 
In Section~\ref{s:examples} we discuss some interesting examples satisfying the hypotheses of the main theorem, as well as their realizations by admissible maps. 

Finally, in Section~\ref{s:planecurves}, Theorems~\ref{thm:realizable-spherical-euclidean} and~\ref{thm:upToSextic}, we apply the main theorem to study 
the following problem for the class of curve orbifold groups.

\begin{problem}[Serre's question in $\PP^2$]\label{prob:serre}
Given a group $G$, can $G$ be realized as the fundamental group of a curve complement in $\PP^2$? That is,
is there a plane curve $C\subset\PP^2$ such that $G\cong\pi_1(\PP^2\setminus C)$?
\end{problem}

Note that all curve orbifold groups are quasi-projective (see Proposition~\ref{prop:realization}).
The answer to Problem~\ref{prob:serre} for the class of curve orbifold groups is given in Corollary~\ref{cor:Serre} except for a special
class of hyperbolic triangle groups, as shown below.

\begin{center}
	\begin{figure}[ht]
		\begin{tabular}{|c|c|c|c|}
			\hline
			\cellcolor{grisclarito}$\GG_{g,(r,\bar m)}$ & \cellcolor{grisclarito}\textsc{yes} & \cellcolor{grisclarito}\textsc{open problem} & \cellcolor{grisclarito}\textsc{no} \\\hline
			\cellcolor{grisclarito}$r\geq 1$ &
			$\array{c}\FF_s*\ZZ_p*\ZZ_q\\
			{\scriptscriptstyle{p,q\geq 1,\ \gcd(p,q)=1,\ s\geq 0}}\endarray$ & \textsc{none}
			& \textsc{otherwise}\\\hline
			\cellcolor{grisclarito}$r=0$ & $\ZZ_p$, ${\scriptscriptstyle{p\geq 1}}$ and $\ZZ^2$ & 
			$\array{c}{\scriptscriptstyle{g=0,\ \bar m=(m_1,m_2,m_3),\ \chi_{g,(r,\bar m)}<0,}}\\
			{\scriptscriptstyle{\gcd(m_1,m_2)=1,\ \gcd(m_1m_2,m_3)\geq 7,}}\\
			{\scriptscriptstyle{\textrm{ and not satisfying the hypotheses of Lemma~\ref{lem:someEvenDegree}}}}
			\endarray$
			& \textsc{otherwise}\\\hline
		\end{tabular}
		\caption{Serre's question for curve complements in $\PP^2$ and curve orbifold groups}
		\label{fig:serre}
	\end{figure}
\end{center}

\subsection*{Acknowledgments}
The authors would like to thank Moisés Herradón Cueto, Anatoly Libgober and Leo Margolis for useful conversations.

\section{Preliminaries}\label{s:preliminaries}
\subsection{Zariski open sets in compact K\"ahler manifolds}

In this section we summarize some well-known properties of Zariski open sets inside compact K\"ahler manifolds and
of morphisms between them which will be used throughout the paper. We include references and/or justifications
for completeness and in order to fix notation.

\begin{rem}[Resolution of singularities]\label{rem:resolution}
Hironaka's resolution of singularities is also available in the complex analytic setting (also in the embedded case),
see \cite[Theorems 2.0.1 and 2.0.2]{Wlodarczyk}. In particular, if $U$ is a Zariski open subset inside of a compact
K\"ahler manifold, then it can also be seen as the complement of a simple normal crossings divisor in another compact
K\"ahler manifold.
\end{rem}

\begin{thm}\label{thm:holomorphicExt}
	Let $U$ be a Zariski open set inside a compact K\"ahler manifold $X$, let $C$ be a smooth quasi-projective
	curve, and let $f:U\to C$ be a surjective holomorphic map. Let $\overline C$ be a smooth (projective)
	compactification of $C$. Then, there exists a compactification $\widehat X$ of $U$ such that $\widehat X$
	is a compact K\"ahler manifold, $\widehat X\setminus U$ is a simple normal crossings divisor, and $f$ extends
	to a holomorphic map $\widehat f:\widehat X\to\overline C$.
\end{thm}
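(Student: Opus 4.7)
The plan is to arrange $X\setminus U$ to be simple normal crossings, extend $f$ to a meromorphic map on this new model, and then resolve the resulting indeterminacies by further blowups supported on the boundary, all while preserving the Kähler property.

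First, by Hironaka's embedded resolution in the complex analytic category (Remark~\ref{rem:resolution}), one may replace $X$ by a bimeromorphic modification, obtained through a finite sequence of blowups in smooth centres contained in $X\setminus U$, so that $D:=X\setminus U$ becomes a simple normal crossings divisor; since each blowup of a compact Kähler manifold along a smooth centre is again compact Kähler (Blanchard's theorem), the new $X$ is still compact Kähler, and $f:U\to C$ is unaffected. Next, I would extend $f$ to a meromorphic map $\tilde f:X\dashrightarrow\overline C$ by taking the closure $\Gamma$ of the graph $G(f)\subset U\times\overline C$ inside the compact Kähler manifold $X\times\overline C$: since $\overline C$ is a smooth projective (hence Kähler) curve and $D$ is simple normal crossings, standard extension theorems for holomorphic maps into projective targets imply that $\Gamma$ is analytic and that the first projection $\pi_1:\Gamma\to X$ is a proper bimeromorphic modification, isomorphic to the identity over $U$; the second projection $\pi_2:\Gamma\to\overline C$ restricts to $f$ over $U$ and supplies the desired meromorphic extension.

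Finally, I would resolve the indeterminacy of $\tilde f$. Its indeterminacy locus is a closed analytic subset of $X$ of codimension at least two, and it is contained in $D$ since $f$ is holomorphic on $U$. Applying Hironaka's resolution of indeterminacies together with embedded log-resolution for the total transform of $D$ produces a finite sequence of blowups $\pi:\widehat X\to X$ in smooth centres contained in $D$ such that $\widehat X\setminus U=\pi^{-1}(D)$ is a simple normal crossings divisor and $\widehat f:=\tilde f\circ\pi:\widehat X\to\overline C$ is holomorphic; iterated application of Blanchard's theorem ensures that $\widehat X$ is compact Kähler, finishing the construction. The delicate step is the meromorphic extension: one must verify that the graph closure is analytic and that its projection to $X$ is a bimeromorphic modification rather than contracting a positive-dimensional fibre, and for this the projectivity of $\overline C$ together with the simple normal crossings structure of $D$ are essential.
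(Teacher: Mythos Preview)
Your proof is correct and follows the same strategy as the paper's: obtain a meromorphic extension of $f$ to the compact model and then resolve indeterminacies by blowups supported in the boundary, keeping the K\"ahler property throughout. The only real difference is that the paper invokes \cite[Corollary~A]{Griffiths-ZariskiOpen} directly for the meromorphic extension $X\dashrightarrow\overline C$ (without first arranging $X\setminus U$ to be SNC) and cites Remmert for the codimension-$\geq 2$ indeterminacy locus; Griffiths' result is exactly the precise reference behind your phrase ``standard extension theorems for holomorphic maps into projective targets,'' so you should cite it rather than leave that step---which you yourself flag as delicate---as a black box.
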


\begin{proof}
By \cite[Corollary A]{Griffiths-ZariskiOpen}, $f$ defines a meromorphic map $f:X\dashrightarrow \overline C$.
By \cite{Remmert} (see also \cite[Theorem 2.5]{Ueno-book}), the set of points of indeterminacy of
$f:X\dashrightarrow \overline C$ has codimension at least 2 on $X$. The proof of the resolution of indeterminacies
in the algebraic setting also works for $f$ by Remark~\ref{rem:resolution}.
\end{proof}

\begin{thm}[Stein factorization for maps to curves]\label{thm:Stein}
	Let $X,C$ be smooth compact complex manifolds, where $C$ is a (necessarily projective) curve, and let  $f:X\to C$ be a surjective holomorphic map. Then, there exists a smooth projective curve $D$ and holomorphic maps $g:X\to D$ and $h:D\to C$ such that $f=h\circ g$, $h$ is a finite map, $g$ is surjective and all the fibers of $g$ are connected. 
\end{thm}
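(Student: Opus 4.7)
The plan is to apply the general Stein factorization theorem for proper holomorphic maps and then use the fact that $C$ is one-dimensional and $X$ is smooth to upgrade the intermediate space to a smooth projective curve.

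First, since $X$ is compact, the surjective holomorphic map $f$ is proper, so Grauert's direct image theorem ensures that $f_*\mathcal{O}_X$ is a coherent sheaf of $\mathcal{O}_C$-algebras. I would set $D := \mathrm{Specan}(f_*\mathcal{O}_X)$; the analytic Stein factorization then produces a finite holomorphic map $h:D\to C$ and a proper holomorphic map $g:X\to D$ with connected fibers such that $f = h\circ g$. Surjectivity of $g$ is immediate: the image $g(X)$ is closed in $D$ by properness, and its image under the finite map $h$ equals $C$, which forces $g(X)=D$.

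Second, I would verify that $D$ is a smooth projective curve. Since $h$ is finite with target the compact $1$-dimensional manifold $C$, the space $D$ is compact of pure dimension $1$. Because $X$ is smooth and hence normal, $f_*\mathcal{O}_X$ is integrally closed in the pushforward of the sheaf of meromorphic functions on $X$, which geometrically means that $D$ is a normal complex analytic space. A normal complex analytic space of dimension $1$ is automatically smooth, so $D$ is a compact Riemann surface; and every compact Riemann surface is projective, yielding the desired smooth projective curve.

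The main obstacle, more technical than conceptual, is the justification of normality of $D$ in the analytic category. An alternative route that avoids appealing to the preservation of integral closure under proper direct image is to introduce the normalization $\nu:\widetilde D\to D$, lift $g$ (using that the smooth $X$ is normal) to a proper holomorphic map $\widetilde g:X\to\widetilde D$, and then invoke the universal property of the Stein factorization to identify $\widetilde D$ with $D$; once smoothness of $D$ is in hand, projectivity is the classical fact that every compact Riemann surface is projective algebraic.
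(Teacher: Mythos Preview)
Your proposal is correct and takes essentially the same route as the paper: the paper's proof is a one-line citation to the general analytic Stein factorization in Grauert--Remmert, \emph{Coherent analytic sheaves}, p.~219, and you are simply unpacking that reference and supplying the extra observation that the intermediate space is a smooth projective curve (normal because $X$ is, one-dimensional and compact because $h$ is finite over $C$, hence a compact Riemann surface).
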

\begin{proof}
	This is a consequence of the more general statement in \cite[p. 219]{GR-sheaves}.
\end{proof}

\begin{rem}\label{rem:SteinAdmissible}
	Let $f:U\to C$ be a surjective holomorphic map between a Zariski open subset $U$ of a compact K\"ahler manifold and a smooth quasi-projective curve $C$. Let $\widehat f:\widehat X\to \overline C$ be an extension of $f$ as in Theorem~\ref{thm:holomorphicExt}. Let $\widehat f=h\circ g$ be its Stein factorization given by Theorem~\ref{thm:Stein}. Then, $g_|:U\to g(U)$ is an admissible map. 
\end{rem}

\begin{thm}[Verdier's generic fibration theorem for maps to curves]\label{thm:Verdier}
	Let $U$ be a Zariski open set in a compact K\"ahler manifold $X$, let $C$ be a smooth quasi-projective curve, and let $f:U\to C$ be a surjective holomorphic map. Then, there exists a finite set $B_f\subset C$, called the set of atypical values of $f$, such that $f_|: f^{-1}(C\setminus B_f)\to C\setminus B_f$ is a locally trivial fibration.
\end{thm}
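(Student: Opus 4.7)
The strategy is to extend $f$ to a proper holomorphic map between compact complex manifolds and apply a stratified version of Ehresmann's fibration theorem. The statement is essentially Verdier's generic fibration theorem in the algebraic setting, transported to the Kähler setting via the extension theorem already established as Theorem~\ref{thm:holomorphicExt}.

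First I would use Theorem~\ref{thm:holomorphicExt} to obtain a proper holomorphic extension $\widehat f:\widehat X\to\overline C$, where $\widehat X$ is a compact Kähler manifold and $D:=\widehat X\setminus U$ is a simple normal crossings divisor. I would then equip $\widehat X$ with the canonical Whitney stratification determined by $D$: the open stratum is $U$, and the closed strata are the connected components of intersections of irreducible components of $D$ minus their deeper intersections. This stratification has only finitely many strata and is Whitney regular, as is standard for SNC divisors.

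Next I would identify the finite set of ``bad'' base values. For each stratum $S$, the restriction $\widehat f|_S:S\to\overline C$ is a holomorphic map from a smooth complex manifold. If $\widehat f|_S$ is constant, its image is a single point of $\overline C$. Otherwise, the critical locus $\Sigma_S=\{x\in S:d(\widehat f|_S)_x=0\}$ is a proper analytic subset of $S$, and its closure in $\widehat X$ is a compact analytic set. By Remmert's proper mapping theorem, the image $\widehat f(\overline{\Sigma_S})$ is a proper analytic subset of the curve $\overline C$, hence finite. Taking the union over the finitely many strata, together with $\overline C\setminus C$ and the image points of constant strata, yields a finite set $B_f'\subset\overline C$. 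I would then define $B_f:=B_f'\cap C$.

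Over $\overline C\setminus B_f'$, the map $\widehat f$ is a proper submersion when restricted to each stratum; by Thom's first isotopy lemma applied to the Whitney stratification, $\widehat f$ is a stratified locally trivial fibration over $\overline C\setminus B_f'$ that preserves each stratum. Restricting to the top stratum $U$ gives exactly that $f:f^{-1}(C\setminus B_f)\to C\setminus B_f$ is a locally trivial fibration, as desired. The main technical step is the finiteness of $B_f'$, which is where the 1-dimensionality of $\overline C$ is used crucially: it allows Sard-type information to be upgraded, via properness of the closures of stratum critical loci and Remmert's theorem, to actual finiteness of the set of atypical values.
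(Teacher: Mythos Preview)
Your approach is essentially the paper's: extend $f$ to a proper map via Theorem~\ref{thm:holomorphicExt}, take the Whitney stratification induced by the SNC divisor, and invoke the stratified Bertini--Sard/Thom isotopy machinery---the paper simply cites \cite[Corollaire~5.1]{Verdier-Sard} for this last step while you unpack it. One minor technical point: the closure $\overline{\Sigma_S}$ in $\widehat X$ is not obviously analytic in general, but in the SNC case each closed stratum $\overline S$ is itself a smooth compact submanifold, so you can instead take the critical locus of $\widehat f|_{\overline S}$, which is genuinely compact analytic and has the same image under~$\widehat f$.
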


\begin{proof}
	This is a consequence of \cite[Corollaire 5.1]{Verdier-Sard} applied to a holomorphic enlargement $\widehat f: \widehat X\to \overline C$ as in Theorem~\ref{thm:holomorphicExt}, where $\widehat X\setminus U$ is a simple normal crossings divisor. Indeed, \cite[Corollaire 5.1]{Verdier-Sard} is stated in the algebraic setting, but the proof given therein works in this setting using the Whitney stratification of $\widehat X$ given by $U$ (the open stratum) and the natural stratification of the simple normal crossings divisor $\widehat X\setminus U$.
\end{proof}

\begin{thm}\label{thm:finitecover}
	Let $U$ be a Zariski open subset inside a compact K\"ahler manifold $X$, and let $p:\widetilde U\to U$ be a finite covering map. Then, there exists a compact K\"ahler manifold $Y$ such that $\widetilde U\subset Y$ is a Zariski open subset. 
\end{thm}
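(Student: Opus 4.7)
The plan is to extend $p$ to a finite ramified cover of $X$ and then resolve singularities, being careful to verify the K\"ahler property at the end. First, using Remark~\ref{rem:resolution}, I would replace $X$ (without changing $U$) by a bimeromorphic model in which $D:=X\setminus U$ is a simple normal crossings divisor inside a compact K\"ahler manifold. This reduces the problem to the case of an SNC boundary, which is the natural setting for extending finite \'etale covers.

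Second, I would promote $p$ to a holomorphic map and extend it. The topological finite covering $p:\widetilde U\to U$ endows $\widetilde U$ with a unique complex structure making $p$ a finite \'etale holomorphic map. By the Grauert--Remmert theorem on normal extensions of finite \'etale covers of complex manifolds, there exists a (unique up to isomorphism) finite surjective holomorphic map $\bar p:\widetilde X\to X$ with $\widetilde X$ a normal compact complex analytic space, $\bar p^{-1}(U)=\widetilde U$, and $\bar p|_{\widetilde U}=p$; the branch locus of $\bar p$ is contained in $D$.

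Third, I would resolve the singularities of $\widetilde X$ using Hironaka's theorem in the analytic category (Remark~\ref{rem:resolution}), obtaining a compact complex manifold $Y$ together with a proper bimeromorphic morphism $\pi:Y\to\widetilde X$ that is an isomorphism over the smooth locus of $\widetilde X$ and for which $Y\setminus\widetilde U$ is a simple normal crossings divisor. Since $\widetilde U$ is smooth (being an \'etale cover of the smooth manifold $U$), $\pi$ restricts to an isomorphism over $\widetilde U$; hence $\widetilde U$ sits inside $Y$ as a Zariski open subset.

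The last and hardest step is to show that $Y$ is K\"ahler. The idea is that $\widetilde X$ acquires the structure of a compact K\"ahler space in the sense of Grauert by pulling back local K\"ahler potentials from $X$ along the finite map $\bar p$ (the pullbacks are continuous strictly plurisubharmonic functions on $\widetilde X$ whose $i\partial\bar\partial$ glue to a K\"ahler form on $\widetilde X$ as a singular space). One then invokes Varouchas' theorem (\emph{K\"ahler spaces and proper open morphisms}, Math.~Ann.~\textbf{283} (1989)) that every smooth resolution of a compact K\"ahler space is itself K\"ahler, applied to $\pi:Y\to\widetilde X$, to conclude that $Y$ is a compact K\"ahler manifold. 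The main obstacle is this K\"ahler step: one cannot remain in the category of smooth K\"ahler manifolds throughout, and must pass through the theory of singular K\"ahler spaces and Varouchas' bimeromorphic stability result.
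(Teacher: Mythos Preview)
Your proposal is correct and follows essentially the same route as the paper: reduce to an SNC boundary via Remark~\ref{rem:resolution}, extend the cover using Grauert--Remmert to a normal branched cover, resolve singularities, and then invoke Varouchas to conclude the result is K\"ahler. The paper compresses your final K\"ahler step into a single citation of \cite[Proposition 1.3.1 (ii), (v), (vi)]{Varouchas}, whereas you spell out that the intermediate space is a K\"ahler space and that resolutions of compact K\"ahler spaces are K\"ahler---but the content is the same.
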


\begin{proof}
	By Remark~\ref{rem:resolution}, we may assume that $X\setminus U$ is a simple normal crossings divisor. By the Grauert-Remmert theorem~\cite{GrauertRemmert} (see the statement in \cite[Theorem 1]{NambaGerms}), $p$ extends to a branched covering $\overline p: Z\to X$, where $Z$ is a normal variety containing $\widetilde U$ as a Zariski open subset. Let $\pi:Y\to Z$ be a resolution of singularities of $Z$ as in Remark~\ref{rem:resolution}. Then, by \cite[Proposition 1.3.1 (ii), (v), (vi)]{Varouchas}, $Y$ is a compact K\"ahler manifold containing $\widetilde U$ as a Zariski open subset.
\end{proof}

\subsection{Characteristic varieties}\label{ss:characteristic}
	Let $G$ be a finitely presented group, and let $X$ be a connected finite CW-complex such that $G\cong\pi_1(X)$.
	Let us denote $H:=H_1(X;\ZZ)=G/G'$. The space of characters on $G$ is a complex torus
$
\TT_G:=\Hom(G,\CC^*)=\Hom(H,\CC^*)=H^1(X;\CC^*).
$
This $\TT_G$ can have multiple connected components, but it only contains one connected torus, which we
denote by $\TT_G^{\one}$.

\begin{dfn}
	\label{def-char-var}
The $k$-th~\emph{characteristic variety}~of $G$ is defined by:
	\[
	\Sigma_{k}(G):=\{ \xi \in \TT_G\mid \dim H^1(X,\CC_{\xi}) \ge k \},
	\]
	where $H^1(X,\CC_{\xi})$ is classically called the 
	\emph{twisted cohomology of $X$ with coefficients in the local system $\xi\in \TT_G$}.
	It is also customary to use $\Sigma_{k}(Y)$ for $\Sigma_{k}(G)$ whenever $\pi_1(Y)\cong G$.
\end{dfn}

In order to prove Theorem~\ref{thm:GOPintro}, we will use the following result of Arapura relating the
(first) characteristic variety of $U$ with the existence of admissible maps from $U$ to curves of general type
(see Remark~\ref{rem:general_type}).

\begin{prop}[{\cite[Prop V.1.7]{Arapura-geometry}}]\label{prop:Arapura}
Let $U$ be a Zariski open set of a compact K\"ahler manifold.
\begin{itemize}
	\item If $F:U\to C$ is an admissible map to a smooth quasi-projective curve $C$ of general type,
	then $F^*(H^1(C,\CC^*))$ is a component of $\Sigma_1(U)$. Moreover, if there exists another admissible map
	$F':U\to C'$  to a smooth quasi-projective curve $C'$ of general type such that $F^*(H^1(C,\CC^*))=(F')^*(H^1(C',\CC^*))$, then there exists a (necessarily unique) algebraic isomorphism $\phi:C\to C'$ for which $F'=\phi\circ F$.
	\item Conversely, any positive dimensional
	component of $\Sigma_1(U)$ containing $1$ is of the form $F^*(H^1(C,\CC^*))$ for some admissible map $F:U\to C$ to a smooth quasi-projective curve $C$ of general type.
\end{itemize}
\end{prop}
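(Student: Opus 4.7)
My plan is to prove the two directions separately. The direct containment $F^*H^1(C,\CC^*)\subseteq\Sigma_1(U)$ follows from a Leray spectral sequence computation together with a vanishing argument specific to curves of general type. The converse, which is the harder half, proceeds by first showing that every positive dimensional component of $\Sigma_1(U)$ is a subtorus, then identifying its tangent space at $\one$ with an isotropic subspace of logarithmic $1$-forms on the compactification, and finally applying a logarithmic Castelnuovo--de Franchis theorem to produce the desired admissible map.

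For the direct part, I fix an enlargement $\widehat F:\widehat X\to\overline C$ of $F$ with connected fibers (as in Definition~\ref{dfn:admissible}) and, for every $\xi\in H^1(C,\CC^*)$, consider the twisted Leray spectral sequence
$$E_2^{p,q}=H^p(C,R^qF_*\CC_{F^*\xi})\Rightarrow H^{p+q}(U,\CC_{F^*\xi}).$$
Connectivity of the fibers of $F$ (Theorem~\ref{thm:Stein} and Remark~\ref{rem:SteinAdmissible}), together with the projection formula, gives $R^0F_*\CC_{F^*\xi}\cong \CC_\xi$, and the edge morphism produces an injection $H^1(C,\CC_\xi)\hookrightarrow H^1(U,\CC_{F^*\xi})$. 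Since $\chi(C)<0$, a standard Euler characteristic computation for twisted cohomology on a smooth curve yields $\dim H^1(C,\CC_\xi)\geq -\chi(C)\geq 1$ for every $\xi$, so $F^*H^1(C,\CC^*)\subseteq\Sigma_1(U)$. To upgrade this to being an irreducible component, I note that $F^*H^1(C,\CC^*)$ is already an irreducible subtorus of $\TT_{\pi_1(U)}$; were it strictly contained in some component $W$, the converse direction below would exhibit a second admissible map $G:U\to D$ through which $F$ necessarily factors, and the connectivity of fibers together with $\dim H^1(D,\CC)>\dim H^1(C,\CC)$ would force a contradiction. For the uniqueness clause, if $F$ and $F'$ have the same pullback torus, they pull back the same subspace of logarithmic $1$-forms in $H^0(\widehat X,\Omega^1_{\widehat X}(\log(\widehat X\setminus U)))$, so by integration they have the same connected fibers, producing a unique algebraic isomorphism $\phi:C\to C'$ with $F'=\phi\circ F$.

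For the converse, let $W\subseteq\Sigma_1(U)$ be a positive dimensional irreducible component with $\one\in W$. By the structure theorem of Arapura (building on Beauville, Simpson and Green--Lazarsfeld), any such component is a translate of a subtorus of $\TT_{\pi_1(U)}$, so $W$ itself is a subtorus. Its tangent space $V:=T_{\one}W\subseteq H^1(U,\CC)$ has the property that its projection to $H^0(\widehat X,\Omega^1_{\widehat X}(\log(\widehat X\setminus U)))$ via the mixed Hodge structure is at least two-dimensional and totally isotropic under wedge product. Picking two linearly independent logarithmic $1$-forms $\omega_1,\omega_2$ in this image with $\omega_1\wedge\omega_2=0$, the logarithmic Castelnuovo--de Franchis theorem (Catanese's quasi-projective version, in the form used by Arapura) yields a smooth quasi-projective curve $C$, an admissible map $F:U\to C$, and logarithmic $1$-forms $\eta_i$ on $\overline C$ with $\omega_i=F^*\eta_i$. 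The target $C$ must be of general type: otherwise $C$ would be rational, affine or elliptic, and the torus $F^*H^1(C,\CC^*)$ would have dimension strictly smaller than $\dim W$, or the pair $(\eta_1,\eta_2)$ could not exist on $\overline C$. The direct part then forces $F^*H^1(C,\CC^*)\subseteq W$, and a dimension count combined with the maximality of $W$ as a component of $\Sigma_1(U)$ gives the equality $W=F^*H^1(C,\CC^*)$.

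The main obstacle is clearly the converse, and within it the three-step passage from the abstract subtorus $W\subseteq\Sigma_1(U)$ to a bona fide admissible map. The algebraic step (i), classifying positive dimensional components of $\Sigma_1(U)$ as translated subtori, rests on mixed Hodge theory for the quasi-projective manifold $U$ together with Simpson's non-abelian Hodge correspondence on $\widehat X$; this fails without the Kähler hypothesis. The Hodge-theoretic step (ii), identifying $T_{\one}W$ with an isotropic subspace of logarithmic $1$-forms, requires a careful Green--Lazarsfeld--style deformation computation adapted to the logarithmic setting. Finally, the analytic step (iii), logarithmic Castelnuovo--de Franchis, demands not only the classical construction of the fibration from isotropic $1$-forms but also the verification that the resulting meromorphic map extends to an admissible map in the sense of Definition~\ref{dfn:admissible} and that its target is of general type, both of which are where the positive dimensionality of $W$ is essentially used.
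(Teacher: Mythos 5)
This proposition is not proved in the paper at all: it is quoted verbatim from Arapura (\cite[Prop.\ V.1.7]{Arapura-geometry}) and used as a black box, so there is no internal proof to compare against. Your sketch is, in outline, a faithful reconstruction of Arapura's actual strategy (Leray spectral sequence plus the twisted Euler characteristic $\chi(C)<0$ for the forward inclusion; the structure of cohomology jump loci plus a logarithmic Castelnuovo--de Franchis argument for the converse), so as a summary of where the result comes from it is on target.

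Treated as a standalone proof, however, it has genuine gaps at exactly the points you flag as "obstacles.'' First, you assert that the projection of $T_{\one}W$ to $H^0(\widehat X,\Omega^1_{\widehat X}(\log(\widehat X\setminus U)))$ is at least two-dimensional and isotropic; a positive-dimensional $W$ only gives you one tangent direction $\omega$ a priori, and producing a second class $\eta$ with $\omega\wedge\eta=0$ and $\eta\notin\CC\omega$ requires the Green--Lazarsfeld deformation computation (the middle cohomology of the cup-product complex $H^0\to H^1\to H^2$), plus a separate Hodge-theoretic argument to replace the resulting classes in $H^1(U,\CC)$ by honest holomorphic logarithmic forms. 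Neither step is routine in the open setting. Second, your argument that $F^*(H^1(C,\CC^*))$ is a full component is circular as written: you invoke the converse direction to produce a map $G:U\to D$ through which $F$ factors via some $h:D\to C$, but a surjection of general-type curves with $h^*$ non-surjective on $H^1$ is not by itself contradictory, so the dimension count you gesture at does not close the loop; Arapura's proof handles this by computing $\dim H^1(U,\CC_{F^*\xi})$ exactly for generic $\xi$. Finally, the converse leans on "the structure theorem of Arapura'' for the subtorus claim, which in the original reference is established jointly with this very proposition, so the logical order needs care. Since the paper simply cites the result, the cleanest course is to do the same rather than to reprove it.
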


\subsection{Orbifolds and orbifold fundamental groups.}\label{ss:orbifold}
Consider a smooth projective curve $C_g$ of genus $g$ and choose a labeling map
$\varphi:C_g\to\ZZ_{\geq 0}$ such that $\varphi(P)\neq 1$ only for a finite number of points.
In this context, we will refer to $\varphi$ as an \emph{orbifold structure on} $C_g$.

Given the orbifold structure $\varphi$ on $C_g$, let $M:=\{P\in C_g\mid \varphi(P)\neq 1\}=M_0\cup M_+$, where
$M_0:=\{P\in C_g\mid \varphi(P)= 0\}$ and $M_+=\{P\in C_g\mid \varphi(P)\geq 2\}$. This orbifold structure will
also be denoted by $C_{g,(r,\bar m)}$, where $r=\# M_0$, and, if $M_+=\{P_1,\ldots,P_n\}$,
$\bar{m}=(\varphi(P_1),\ldots,\varphi(P_n))$.

\begin{dfn}[Orbifold fundamental group]\label{dfn:orbipi1}
The \emph{orbifold fundamental group} associated with $C_{g,(r,\bar m)}$,
denoted by $\pi_1^{\orb}(C_{g,(r,\bar m)})$, is the quotient of
$
\pi_1(C_g\setminus M)$ by the normal closure of the subgroup
$
\langle \mu_P^{\varphi(P)}, P\in M_+\rangle,
$
where $\mu_P$ is a meridian in $C_g\setminus M$ around $P\in M$.
\end{dfn}

\begin{rem}\label{rem:orbipi1open}
Let $C'$ be a smooth quasi-projective curve, and let $\varphi: C'\to\ZZ_{\geq 1}$ be a labeling map such that
$\varphi (P)\neq 1$ for only a finite number of points. Let $\bar m$ be the tuple whose entries are the
$\varphi(P)$ that are greater or equal than $2$ for $P\in C'$. Let $C$ be a smooth compactification of $C'$.
Let $g$ be the genus of $C$, and let $r=\# C\setminus C'$, so, in the notation used before, $C=C_g$. Let $\widehat\varphi: C\to\ZZ_{\geq 0}$
be the extension of $\varphi$ to $ C$ such that $\varphi(P)=0$ for all $P\in C\setminus C'$, and let
$C_{g,(r,\bar m)}$ be the corresponding orbifold structure on $C=C_g$. In such a case, where the genus of $C$ is known, the notation $\pi_1^{\orb}(C'_{\bar m})$ or $\pi_1^{\orb}(C_{r,\bar m})$ will refer to $\pi_1^{\orb}(C_{g,(r,\bar m)})$.
\end{rem}

Note that $\pi_1^{\orb}(C_{g,(r,\bar m)})$ has a presentation with generators
$
\{a_i,b_i\}_{i=1,\dots,g} \cup \{\mu_P\}_{P\in M}
$
and relations
\begin{equation}
	\label{eq:rels}
	\mu_P^{m_P}=1, \quad \textrm{ for } P\in M_+, \quad \textrm{ and } \quad
	\prod_{P\in M}\mu_P=\prod_{i=1,\dots,g}[a_i,b_i]
\end{equation}
for appropriately chosen
$\{a_i,b_i\}_{i=1,\dots,g}$ and meridians $\{\mu_P\}_{P\in M}$.

\begin{dfn}[Orbifold morphism]\label{dfn:orbimorphism}
Let $U$ be a Zariski open subset of a compact K\"ahler manifold, and let $C_g$ be a smooth projective curve
of genus $g$. Let $F:U\to C_g$ be a dominant holomorphic map.
Consider the orbifold structure $C_{g,(r,\bar m)}$ given by a certain $\varphi:C_g\to\ZZ_{\geq 0}$ on
$C_g$. We say that $F$ defines an \emph{orbifold morphism} $F:U\to C_{g,(r,\bar m)}$ if, for all
$P\in C_g$ the divisor $F^*(P)$ is a $\varphi(P)$-multiple.

The orbifold $C_{g,(r,\bar m)}$ is said to be \emph{maximal} with respect to $F$ if $M_0=C_g\setminus F(U)$
and for all $P\in F(U)$ the divisor $F^*(P)$ is not an $n$-multiple for any $n>\varphi(P)$.
\end{dfn}

Note that the maximal orbifold structure associated to a surjective holomorphic map as in
Definition~\ref{dfn:orbimorphism} always exists by Theorem~\ref{thm:Verdier}.

The following result is well known (see for instance~\cite[Prop. 1.4]{ACM-multiple-fibers}).
\begin{rem}\label{rem:inducedorb}
Let $F:U\to C_{g,(r,\bar m)}$ be an orbifold morphism. Then, $F$ induces a morphism
$F_*:\pi_1(U)\to\pi_1^{\orb}(C_{g,(r,\bar m)}),$
such that $F_*:\pi_1(U)\to\pi_1(C_g\setminus M_0)$ factors through it and through the natural projection
$\pi_1^{\orb}(C_{g,(r,\bar m)})\twoheadrightarrow \pi_1(C_g\setminus M_0)$. Moreover, if $F$ is admissible,
then $F_*:\pi_1(U)\to\pi_1^{\orb}(C_{g,(r,\bar m)})$ is surjective.
\end{rem}

\subsection{Curve orbifold groups}\label{ss:cogroups}

\begin{dfn}[Curve orbifold group]
A curve orbifold group is a group which admits a presentation of the form
\begin{equation}\label{eq:Fgroup}
    \GG_{g,(r,\bar m)}:=
	\left\langle \array{cc} a_i,b_i,x_j,y_k, 
	{\tiny{\array{c}i=1,...,g\\j=1,..,n\\k=1,...,r\endarray}}\endarray \left|\quad
	\prod_{i=1}^{g}[a_i,b_i]=\prod_{j=1}^{n} x_j\prod_{k=1}^r y_k,\, x_1^{m_1}=\ldots=x_n^{m_n}=1\right.\right\rangle
\end{equation}
for some $g,n,r\geq 0$ and $m_1,\ldots,m_n\geq 2$.
\end{dfn}

Since permutations in $\bar m=(m_1,\ldots,m_n)$ yield isomorphic groups, we will assume $m_1\leq ...\leq m_n$
unless otherwise stated. By Definition~\ref{dfn:orbipi1}, curve orbifold groups are exactly the groups that
appear as orbifold fundamental groups of smooth quasi-projective curves, hence the name. More concretely, in the
notation of Remark~\ref{rem:orbipi1open}, $\GG_{g,(r,\bar m)}\cong\pi_1^{\orb}\left((C_{g,r})_{\bar m}\right)$, where $C_{g,r}$
is a smooth quasi-projective genus $g$ curve with $r$ punctures, and the orbifold structure is given by
$n$ marked points of multiplicities $m_1,\ldots,m_n\geq 2$. Hence, we give the following definition.

\begin{dfn}[Compact and open curve orbifold groups]
If $r=0$, we say that the group $\GG_{g,(r,\bar m)}$ is a \emph{compact curve orbifold group} (\cco ~group for short),
and denote it by $\GG_{g,\bar m}$. Moreover, if $\bar m$ is empty ($n=0$), we will sometimes denote $\GG_{g,-}$ simply
by $\GG_g$. That is, $\GG_g$ is the fundamental group of a smooth projective curve of genus $g$.

If $r\geq 1$, we say that the group $\GG_{g,(r,\bar m)}$ is an \emph{open curve orbifold  group}.
\end{dfn}

\begin{rem}
By Theorem~\ref{thm:CCOnotFreeProduct} below, a group cannot be both isomorphic to a \cco ~group and to an open
curve orbifold group unless it is a finite cyclic group.
\end{rem}

\begin{rem}\label{rem:freeProduct}
Suppose that $r\geq 1$. Then, $\GG_{g,(r,\bar m)}\cong \FF_{2g+r-1}*\ZZ_{m_1}*\ldots*\ZZ_{m_n}$. Hence, the class
of open curve orbifold groups coincides with the class of finitely generated free products of cyclic groups. Moreover,
$$
\chi_{g,(r,\bar m)}>0\quad\Leftrightarrow\quad g=0, r=1, n=0,1 \quad\Leftrightarrow\quad \GG_{g,(r,\bar m)}
\text{ is a finite cyclic group, }
$$
and also
\begin{align*}
\chi_{g,(r,\bar m)}=0\quad&\Leftrightarrow\quad g=0, r=1, n=2, m_1=m_2=2\text{ or }g=0, r=2, n=0 \\
&\Leftrightarrow\quad \GG_{g,(r,\bar m)}\cong \ZZ_2*\ZZ_2 \text{ or }\ZZ.
\end{align*}

In particular, $\chi_{g,(r,\bar m)}\leq 0$ if and only if $\GG_{g,(r,\bar m)}$ is infinite.
\end{rem}

\begin{rem}\label{rem:triangle}
\cco ~groups have the following presentation:
\begin{equation}\label{eq:Ggm}
		\GG_{g,\bar m}:=\left\langle \array{cc} a_i,b_i,x_j,
		{\tiny{\array{c} i=1,...,g\\j=1,...,n\endarray}}\endarray
		\left|\quad \prod_{i=1}^g[a_i,b_i]=\prod_{j=1}^n x_j,\, x_1^{m_1}=\ldots=x_n^{m_n}=1\right.\right\rangle
	\end{equation}
In particular, the \cco\ groups with $g=0$ and $n=3$ are the triangle groups.
\end{rem}

The following result establishes that curve orbifold groups are quasi-projective.
\begin{prop}[{\cite[Prop. 1.19]{ACM-orbifoldgroups}}]\label{prop:realization}
Any curve orbifold group can be realized as the fundamental group of a smooth quasi-projective surface.
\end{prop}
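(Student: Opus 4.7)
The approach is to exhibit a smooth quasi-projective surface $U$ together with an admissible surjective holomorphic map $f\colon U \to C_{g,r}$ satisfying (i) the generic fiber of $f$ is simply connected, and (ii) $f$ has a multiple fiber of multiplicity exactly $m_i$ over each of $n$ chosen points $P_1,\ldots,P_n \in C_{g,r}$. Granting such an $f$, Remark~\ref{rem:inducedorb} produces a surjection
$f_*\colon \pi_1(U) \twoheadrightarrow \pi_1^{\orb}((C_{g,r})_{\bar m}) = \GG_{g,(r,\bar m)}$, and the simply-connectedness of the generic fiber, combined with the classical (Nori-type) exact homotopy sequence for fibrations with multiple fibers, forces $f_*$ to also be injective.

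To construct $U$, start with $C_g$ endowed with the orbifold structure $(C_g)_{\bar m}$ with marked points $P_1,\ldots,P_n$ of multiplicities $m_1,\ldots,m_n$, and choose an orbifold line bundle $L$ on $(C_g)_{\bar m}$ whose local monodromy at each $P_i$ is a primitive $m_i$-th root of unity (for concreteness, the character $\chi_i(\zeta)=\zeta^{-1}$). Such an $L$ exists because the orbifold Picard group of $(C_g)_{\bar m}$ surjects, via the local monodromy map, onto $\prod_{i=1}^n \ZZ/m_i\ZZ$. The total space of $L$, viewed as an algebraic variety, is a normal quasi-projective surface $V$ whose only singularities are $A_{m_i-1}$-type cyclic quotient singularities at the zero of the fiber over each $P_i$. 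Let $\widetilde X \to V$ be a minimal resolution; then $\widetilde X$ is a smooth quasi-projective surface equipped with a morphism $f\colon\widetilde X \to C_g$ whose generic fiber is $\A^1$ and whose scheme-theoretic fiber over each $P_i$ carries multiplicity $m_i$ (in the local model $V\cong\{XY=Z^{m_i}\}$ with projection $f=X$, the reduced fiber is the $Y$-axis and the total transform is $m_i$ times this component plus exceptional $(-2)$-curves of strictly lower multiplicities, as dictated by the Hirzebruch--Jung algorithm). Finally, set $U := \widetilde X \setminus f^{-1}(\{Q_1,\ldots,Q_r\})$ for $r$ further distinct chosen points $Q_k \in C_g$.

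The main obstacle is the verification, in the local model, that the resolution produces a multiple fiber of exactly the prescribed multiplicity $m_i$ and, globally, that $\pi_1(U)$ equals $\GG_{g,(r,\bar m)}$ rather than a proper quotient or extension thereof. The multiplicity claim follows from the explicit local computation just indicated. The fundamental group identification uses that cyclic quotient surface singularities are rational: their exceptional divisors are simply connected chains of $\PP^1$'s, so collapsing or inserting them neither introduces nor kills relations in $\pi_1$. Combined with the homotopy exact sequence applied to $f$ (whose generic fiber $\A^1$ is simply connected and whose multiple fibers contribute exactly the relations $\mu_{P_i}^{m_i}=1$ in the orbifold fundamental group of the base), this identifies $\pi_1(U)$ with $\GG_{g,(r,\bar m)}$.
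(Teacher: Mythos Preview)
Your construction has a genuine gap at the step where you claim the resolved fiber over $P_i$ has multiplicity $m_i$. In the local model $\{XY=Z^{m}\}\to\CC$ given by $X$, the minimal resolution carries the divisor
\[
\mathrm{div}(\pi^*X)\;=\;m\,\tilde D \;+\;(m-1)E_1\;+\;(m-2)E_2\;+\;\cdots\;+\;1\cdot E_{m-1},
\]
as one checks from the relations $(\mathrm{div}\,\pi^*X)\cdot E_j=0$ together with $\tilde D\cdot E_1=1$, $E_j^2=-2$, $E_j\cdot E_{j+1}=1$. You even note that the exceptional curves occur with ``strictly lower multiplicities''. But the multiplicity of a fiber in the sense of Definition~\ref{dfn:orbimorphism} is the $\gcd$ of the multiplicities of \emph{all} its irreducible components, and here $\gcd(m,m-1,\ldots,1)=1$. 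So after resolution the fiber over $P_i$ is \emph{not} multiple. Equivalently, the resolved local model is simply connected (it deformation retracts onto the chain of $(-2)$-curves), so a meridian around $P_i$ in the base bounds a disk in the total space and becomes trivial in $\pi_1(U)$. The exact sequence of Lemma~\ref{lem:exact} then gives $\pi_1(U)\cong\pi_1(C_{g,r})$, with no orbifold contribution, rather than $\GG_{g,(r,\bar m)}$.

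The underlying obstruction is structural: a smooth local model over a disk with simply connected generic fiber cannot carry a genuine multiple fiber, because the required free $\mu_{m_i}$-action on the fiber does not exist. One way to repair the argument is to abandon the $\A^1$-fiber and instead produce a fibration whose generic fiber is simply connected but higher-dimensional. Concretely, take the torsion-free normal subgroup $N\trianglelefteq\GG_{g,(r,\bar m)}$ of Lemma~\ref{lem:Fenchel}, the associated Galois branched cover $D\to C_{g,r}$ with group $H=\GG_{g,(r,\bar m)}/N$, and a smooth quasi-projective variety $Y$ that is simply connected and carries a free $H$-action (e.g.\ $\PP^N$ minus the fixed loci of a faithful linear $H$-action, for $N$ large). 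Then $(D\times Y)/H$ is smooth, its projection to $C_{g,r}$ has simply connected generic fiber and honest multiple fibers of multiplicity $m_i$ over each $P_i$, and Lemma~\ref{lem:exact} yields $\pi_1\cong\GG_{g,(r,\bar m)}$. A general linear section then cuts this down to a surface with the same fundamental group. (The paper itself does not give a proof, citing \cite[Prop.~1.19]{ACM-orbifoldgroups} instead; the construction there is of this flavor.)
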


We now shift our focus to \cco\ groups, and use the orbifold Euler characteristic to classify them into three different kinds.
 
\begin{dfn}\label{dfn:3cases}
The \cco\ group $\GG_{g,\bar m}$ is spherical (resp. Euclidean, resp. hyperbolic)
when its orbifold Euler characteristic $\chi_{g,\bar m}:=\chi_{g,(0,\bar m)}=2-2g-\sum_{j=1}^n\left(1-\frac{1}{m_j}\right)$
is $>0$ (resp. $=0$, resp. $<0$).
\end{dfn}

It is not clear from Definition~\ref{dfn:3cases} whether a group could be isomorphic to two \cco\
groups that were not of the same kind. The following remark implies that a spherical \cco\
group cannot be isomorphic to a Euclidean/hyperbolic \cco\ group, and the remaining cases
will be settled in Remark~\ref{rem:notiso}.
\begin{rem}[\cite{HKS-infinite}]\label{rem:infinite}
	$\GG_{g,\bar m}$ is an infinite group if and only if
	$\chi_{g,\bar m}\leq 0$.
\end{rem}

\begin{lst}[Compact curve orbifold groups which are not hyperbolic]\label{list} \ 
	\begin{enumerate}
		\item Spherical groups: They are finite by Remark~\ref{rem:infinite}.
		\begin{itemize}
			\item $\GG_{0,(m_1,\ldots,m_n)}$ for $n=0,1,2$, that is, finite cyclic groups.
			Note that if $g=0$ and $n=0,1$, then  $\GG_{g,\bar m}$ is the trivial group.
			If $g=0$ and $n=2$, then $\GG_{g,\bar m}$ is cyclic of order
			$\gcd(m_1,m_2)$.
			\item $\GG_{0,(m_1,m_2,m_3)}$ such that $\sum_{j=1}^3 \frac{1}{m_j}>1$. These are the
			spherical triangle groups, which are finite, and correspond to the following triples $\bar m$:
			\begin{itemize}
				\item $(2,3,3)$, which yields the alternating group $A_4$,
				\item $(2,3,4)$, which yields the symmetric group $S_4$,
				\item $(2,3,5)$, which yields the symmetric group $A_5$,
				\item $(2,2,2)$, which yields the Klein group $\ZZ_2\times\ZZ_2$, and
				\item $(2,2,n)$ for all $n\geq 3$, which yields the dihedral group of order $2n$.
			\end{itemize}
		\end{itemize}
		\item Euclidean groups: They are infinite by Remark~\ref{rem:infinite}, and all of them are wallpaper groups: $\GG_{0,(2,3,6)}$, $\GG_{0, (2,4,4)}$,
		$\GG_{0, (3,3,3)}$, $\GG_{0, (2,2,2,2)}$, and $\GG_1\cong \ZZ^2$.
	\end{enumerate}
\end{lst}

We recall some key facts about \cco\ groups which are well known to experts.

\begin{rem}\label{rem:propertieshyperbolic}
	Let $\GG_{g,\bar m}$ be a hyperbolic \cco\ group, i.e. a group with
	presentation~\eqref{eq:Ggm} such that $\chi_{g,\bar m}<0$. Then, the following
	properties hold:
	\begin{enumerate}
		\item\label{part:abelian}
		Any abelian subgroup of $\GG_{g,\bar m}$ is cyclic (\cite[Theorem 1]{Greenberg}).
		\item\label{part:finite}
		Any finite subgroup of $\GG_{g,\bar m}$ is cyclic, and conjugate to a subgroup of one
		of the groups $\langle x_j\rangle$ for some $j=1,\ldots,n$ in~\eqref{eq:Ggm}. In particular, any
		element of finite order of $\GG_{g,\bar m}$ is conjugate to a power of one of the $x_j$'s  (\cite[Theorem 1]{Greenberg}).
		\item\label{part:order}
		The order of $x_j$ is $m_j$ (\cite{HKS-infinite}).
		\item\label{part:finitecover}
		$\GG_{g,\bar m}$ is not abelian. Indeed, by Lemma~\ref{lem:Fenchel} below,
		$\GG_{g,\bar m}$ has a normal subgroup of finite index which is isomorphic to the
		fundamental group of a genus $g'>1$ smooth projective curve (see \cite{HKS-infinite} for a
		combinatorial proof), which is not abelian.
		\item\label{part:abelianFI}
		$\GG_{g,\bar m}$ does not have an abelian subgroup of index 2. If it did, it would be
		normal, and by part \eqref{part:abelian}, also cyclic. Its intersection with the normal subgroup of
		part \eqref{part:finitecover} would be a cyclic subgroup of finite index of the fundamental group
		of a genus $g'>1$ smooth projective curve, and this is impossible.
		\item\label{part:finiteNormal}
		$\GG_{g,\bar m}$ does not have any finite nontrivial normal subgroups. Indeed, parts \eqref{part:finitecover} and \eqref{part:abelianFI} imply that $\GG_{g,\bar m}$ is not quasi-abelian (in the notation of \cite{Greenberg}), and the result follows from \cite[Theorem 4]{Greenberg}
		and the fact that $\GG_{g,\bar m}$ is infinite.
	\end{enumerate}
\end{rem}

Some of these properties are also shared in the Euclidean case. 

\begin{rem}\label{rem:propertiesEuclidean}
		Let $\GG_{g,\bar m}$ be a Euclidean \cco\ group, i.e. a group with
		presentation~\eqref{eq:Ggm} such that $\chi_{g,\bar m}=0$.
		Then, the following properties hold:
	\begin{enumerate}
		\item\label{part:orderE}
		The order of $x_j$ is $m_j$: If $n=3$ and $g=0$ this follows by \cite[Theorem 1.2]{ChinyereHowie},
		which says that if $G_j$ is the subgroup of $\GG_{g,\bar m}$ generated by $x_j$, then
		the natural homomorphism $G_j\to \GG_{g,\bar m}$ is injective. The proof for
		$\GG_{0,(2,2,2,2)}$ can be obtained from the previous statement, noting that
		$\GG_{0,(2,2,2)}$ is a quotient. The statement is trivial for $\GG_1$.
		\item\label{part:finiteNormalE}
		$\GG_{g,\bar m}$ does not have any finite nontrivial normal subgroups. Indeed,
		$\GG_{g,\bar m}$ is a wallpaper group, so it can be seen as a subgroup of the group of
		isometries of $\RR^2$, which, in this case, is orientation preserving. If
		$\GG_{g,\bar m}=\GG_1\cong\ZZ^2$, the statement is trivial. Suppose that
		$\GG_{g,\bar m}$ is any of the other four Euclidean \cco\ groups.
		Every element of $\GG_{g,\bar m}$ can be interpreted as a translation (which has infinite
		order) or a rotation, and there exist nontrivial rotations. The (normal) subgroup of translations
		of $G$ is isomorphic to $\ZZ^2$, and it has finite index.  Conjugating a rotation by all the powers of a translation one
		obtains infinitely many different rotations.
	\end{enumerate}
\end{rem}

\begin{rem}\label{rem:notiso}
		As mentioned in Remark~\ref{rem:propertiesEuclidean}, all Euclidean \cco\ groups
		have a subgroup that is isomorphic to $\ZZ^2$, which is not cyclic. By
		Remark~\ref{rem:propertieshyperbolic} part~\eqref{part:abelian}, no Euclidean compact curve
		orbifold group can be isomorphic to a hyperbolic \cco\ group.
\end{rem}

The rest of this section contains some statements about curve orbifold groups which are well-known to experts,
although we include the proofs for clarity. The following lemma explains the geometric significance of the
orbifold Euler characteristic of curve orbifold groups.
\begin{lemma}\label{lem:Fenchel}
	Let $\GG_{g,(r,\bar m)}$ be a curve orbifold group. Let $C_{g,(r,\bar m)}$ be a smooth projective curve of
	genus $g$ endowed with the orbifold structure corresponding to $r$ points of multiplicity $0$ and $n$ marked
	points with multiplicities $\geq 2$ given by the tuple $\bar m=(m_1,\ldots,m_n)$. Let
	$P_j\in C_g$ be the point associated with the multiplicity $m_j$ for all $j=1,\ldots,n$, and let $M_0$ be the
	set of points in $C_g$ with multiplicity $0$. Then, the following hold:
	\begin{enumerate}
	\item There exists a finite index normal subgroup $N$ of $\GG_{g,(r,\bar m)}$ such that $N$ doesn't contain any torsion elements other than the identity.
	\item $N$ induces a ramified cover $C'\to C_g\setminus M_0$ ramified along $\{P_1,\ldots,P_n\}$, where $C'$ is
	a smooth quasi-projective curve such that $\pi_1(C')\cong N$. Moreover, $C'$ is projective if and only if $r=0$,
	and quasi-projective and non-compact otherwise.
	\item The branching divisor of this cover is $\sum_{j=1}^n m_j P_j$, unless perhaps if $r=0$ and
	$\GG_{g,\bar m}=\GG_{g,(r,\bar m)}$ is a spherical \cco\ group.
	\end{enumerate}
	
	Let $\rho$ be defined as follows:
	$$
	\rho:=\left\{\begin{array}{lr}
	\text{genus of }C' & \text{ if $r=0$},\\
	\text{rank of $\pi_1(C')$, which is a free group} & \text{ if $r\geq 1$.}
	\end{array}\right.
	$$
	
	Then,
	\begin{itemize}
		\item $\rho\geq 2$ $\Leftrightarrow$  $\chi_{g,(r,\bar m)}<0$.
		\item $\rho= 1$ $\Leftrightarrow$  $\chi_{g,(r,\bar m)}=0$.
		\item $\rho= 0$ $\Leftrightarrow$  $\chi_{g,(r,\bar m)}>0$.
	\end{itemize}
\end{lemma}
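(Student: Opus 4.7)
The plan is to establish parts~(1)–(3) in order and then derive the formula for $\rho$ from an Euler characteristic computation.

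\textbf{Step 1 (existence of $N$).} I would invoke Selberg's lemma, which applies because every curve orbifold group is finitely generated and linear. When $r\geq 1$, $\GG_{g,(r,\bar m)}$ is a free product of cyclic groups, which is well-known to be linear. When $r=0$, the uniformization of the orbifold $C_{g,\bar m}$ realizes $\GG_{g,\bar m}$ either as a cocompact Fuchsian group inside $\mathrm{PSL}(2,\RR)$ (hyperbolic case), as a wallpaper group inside $\mathrm{Isom}(\RR^2)$ (Euclidean case), or as a finite group (spherical case, cf. List~\ref{list}). Selberg's lemma then produces a torsion-free finite-index subgroup, and passing to its normal core yields a torsion-free normal subgroup $N\trianglelefteq\GG_{g,(r,\bar m)}$ of finite index, which we denote by $d$.

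\textbf{Step 2 (construction of $C'$, parts~(2)–(3)).} Set $M:=M_0\cup\{P_1,\dots,P_n\}$ and consider the natural surjection $\pi\colon\pi_1(C_g\setminus M)\twoheadrightarrow\GG_{g,(r,\bar m)}$ from Definition~\ref{dfn:orbipi1}. Then $\tilde N:=\pi^{-1}(N)$ has index $d$ in $\pi_1(C_g\setminus M)$ and corresponds to a finite unramified covering $\tilde C\to C_g\setminus M$. For each $j=1,\dots,n$, the image $x_j=\pi(\mu_{P_j})$ has exact order $m_j$ in $\GG_{g,(r,\bar m)}$ by Remarks~\ref{rem:propertieshyperbolic}\eqref{part:order} and \ref{rem:propertiesEuclidean}\eqref{part:orderE} in the non-spherical cases, which is why the branching assertion explicitly excludes spherical \cco\ groups. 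Combined with the torsion-freeness of $N$, this forces $\tilde N\cap\langle\mu_{P_j}\rangle=\langle\mu_{P_j}^{m_j}\rangle$, so the cover $\tilde C\to C_g\setminus M$ extends uniquely to a ramified cover $C'\to C_g\setminus M_0$, branched with index $m_j$ at each preimage of $P_j$ and not filled in above points of $M_0$. Then $\pi_1(C')\cong N$, the branching divisor equals $\sum_{j=1}^{n} m_j P_j$, and $C'$ is compact iff $M_0=\emptyset$, i.e.\ iff $r=0$; when $r\geq 1$, $C'$ is a non-compact Riemann surface and its fundamental group is therefore free.

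\textbf{Step 3 (formula for $\rho$).} Multiplicativity of the orbifold Euler characteristic under the degree-$d$ orbifold cover $C'\to C_{g,(r,\bar m)}$ (where $C'$ carries the trivial orbifold structure since $N$ is torsion-free) gives $\chi(C')=d\cdot\chi_{g,(r,\bar m)}$. In the compact case ($r=0$) this reads $2-2\rho=d\,\chi_{g,(r,\bar m)}$, while in the open case ($r\geq 1$) it reads $1-\rho=d\,\chi_{g,(r,\bar m)}$. Since $\rho$ is a non-negative integer and $d\geq 1$, the sign of $\chi_{g,(r,\bar m)}$ determines whether $\rho\geq 2$, $\rho=1$, or $\rho=0$, as claimed.

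The main obstacle is Step~1: the compact hyperbolic case requires either Selberg's lemma combined with the orbifold uniformization theorem, or a direct combinatorial argument in the spirit of Fenchel–Bundgaard–Nielsen (as used in \cite{HKS-infinite}, already cited above). Once $N$ is in hand, the rest of the argument is standard orbifold covering-space theory together with Euler-characteristic bookkeeping.
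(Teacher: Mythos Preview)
Your approach is essentially the same as the paper's: establish the existence of $N$, build the branched cover, then read off $\rho$ from a Riemann--Hurwitz/Euler-characteristic computation. Two differences are worth noting.

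First, for part~(1) you invoke Selberg's lemma after realizing $\GG_{g,\bar m}$ as a Fuchsian (or wallpaper, or finite) group via orbifold uniformization, whereas the paper simply cites the Fenchel conjecture directly (Nielsen for $r>0$, Bundgaard--Nielsen for $g>0$, Fox for $g=r=0$). Both routes are valid; the paper's citation is more self-contained since it avoids invoking uniformization, while your route makes the linearity of these groups explicit.

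Second, there is a small gap in your Step~3. You assert $\chi(C')=d\,\chi_{g,(r,\bar m)}$ by multiplicativity for the orbifold cover $C'\to C_{g,(r,\bar m)}$, but this map is a genuine orbifold cover only when the ramification index at each preimage of $P_j$ equals $m_j$, i.e.\ when $x_j$ has exact order $m_j$. You correctly flagged in Step~2 that this can fail for spherical \cco\ groups (e.g.\ $\GG_{0,(m_1,m_2)}$ with $\gcd(m_1,m_2)<m_1$), but you then apply the multiplicativity formula in Step~3 without revisiting that case. The paper handles this by writing Riemann--Hurwitz with the actual ramification indices $\tilde m_j\leq m_j$, obtaining the inequality $2-2\rho\geq d\,\chi_{g,\bar m}$, which still forces $\rho=0$ whenever $\chi_{g,\bar m}>0$; equality holds in the Euclidean and hyperbolic cases, giving $\rho=1$ and $\rho\geq 2$ respectively. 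Alternatively you can simply dispose of the spherical case at the outset: the group is finite, so any torsion-free subgroup is trivial, $C'\cong\PP^1$, and $\rho=0$.
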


\begin{proof}
The existence of $N$ is known as Fenchel's conjecture, proved in \cite{Nielsen-commutator} for $r>0$,
in \cite{BundgaardNielsen} for $g>0$ and in \cite{Fox-Fenchel} for $g=0$ and $r=0$.

We use facts about branched covers that can be found in \cite[Section 1]{Uludag}. The
normal subgroup $N$ yields a finite Galois cover $C'\to C_g\setminus M_0$ branched along $M_+=\{P_1,\ldots,P_n\}$,
where $\pi_1(C')=N$. In fact, the existence of such $N$ is equivalent to the
existence of such a cover. Note that $C'$ is a normal curve, so it is smooth. $C'$ is compact if and only if
$C_g\setminus M_0$ is compact, which happens if and only if~$r=0$.

The branching divisor is of the form $\sum_{j=1}^n \tilde m_j P_j$, with
$1\leq \tilde m_j\leq m_j$, and $m_j=\tilde m_j$ if and only if $x_j^{m}\neq 1\in\GG_{g,\bar m}$
for all $1\leq m<m_j$. Hence, by Remark~\ref{rem:propertieshyperbolic} part~\eqref{part:order} and
Remark~\ref{rem:propertiesEuclidean} part~\eqref{part:orderE}, the branching divisor is of the form
$\sum_{j=1}^n  m_j P_j$ if $r=0$ and $\GG_{g,(0,\bar m)}$ is hyperbolic or Euclidean. If $r\geq 1$,
$\GG_{g,(r,\bar m)}$ is a free product of cyclic groups and we also obtain that $\tilde m_j=m_j$.

Let $d$ be the degree of the cover $C'\to C_g\setminus M_0$, which is the index of $N\trianglelefteq\GG_{g,(r,\bar m)}$.
The rest of the statements follow from the following two equations. If $r=0$,
$$
2-2\rho=\chi(C')=d(\chi(C_g\setminus M_+))+\sum_{j=1}^n\frac{d}{\tilde m_j}\geq d\, \chi_{g,\bar m},
$$
which, if $\GG_{g,\bar m}$ is hyperbolic or Euclidean, is an equality. If $r\geq 1$,
$$
1-\rho=\chi(C')=d(\chi(C_g\setminus\left(M_0\cup M_+\right)))+\sum_{j=1}^n\frac{d}{m_j}=
d\left(2-2g-r-\sum_{j=1}^n\left(1-\frac{1}{m_j}\right)\right)= d\, \chi_{g,(r,\bar m)}.
$$
\end{proof}
	
\begin{rem}\label{rem:chiIsoClass}
It follows from the proof of Lemma~\ref{lem:Fenchel} that if the \cco\ group $\GG_{g,\bar m}$ is infinite
(that is, hyperbolic or Euclidean), the quantity $\chi_{g,\bar m}$ is an invariant of the isomorphism class
of the group $\GG_{g,\bar m}$. In particular, this gives a different proof of Remark~\ref{rem:notiso}.
\end{rem}

The following result clarifies when the tuple $(g,r,\bar m)$ is an invariant of the isomorphism class of
$\GG_{g,(r,\bar m)}$. 

\begin{thm}\label{thm:CCOnotFreeProduct}
Let $g,g',r,r',n,n'\geq 0$, let $\bar m=(m_1,\ldots,m_n)$ and $\bar m'=(m_1',\ldots,m_{n'}')$ be such that $2\leq m_1\leq m_2\leq\ldots\leq m_n$ and $2\leq m_1'\leq m_2'\leq\ldots\leq m_{n'}'$. Suppose that $\GG_{g,(r,\bar m)}\cong\GG_{g',(r',\bar m')}$ and that those groups are not finite cyclic groups. Then,
\begin{enumerate}
\item If $r=0$ (i.e. $\GG_{g,(r,\bar m)}$ is a \cco\ group), then
	$$
	(g,r,n,\bar m)=(g',r',n',\bar m').
	$$
\item If $r\geq 1$, then $r'\geq 1$, and
$$
(2g+r,n,\bar m)=(2g'+r',n',\bar m').
$$
\end{enumerate}
\end{thm}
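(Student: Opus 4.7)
The plan is threefold: separate the compact ($r=0$) and open ($r\geq 1$) cases, then identify the parameters within each. Suppose first that $\GG_{g,(r,\bar m)}$ is a non-finite-cyclic \cco\ group (so $r=0$); I would argue that it cannot be isomorphic to an open curve orbifold group $\GG_{g',(r',\bar m')}$ with $r'\geq 1$. By Remark~\ref{rem:freeProduct}, such an open group has the form $\FF_{2g'+r'-1}*\ZZ_{m_1'}*\cdots*\ZZ_{m_{n'}'}$, hence is virtually free (by Karrass-Pietrowski-Solitar) and every abelian subgroup is cyclic (by Kurosh's subgroup theorem). If the \cco\ group is hyperbolic, Lemma~\ref{lem:Fenchel} gives a finite-index surface subgroup of genus $\geq 2$, which has cohomological dimension $2$ and so is not virtually free. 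If it is Euclidean, the same lemma provides a finite-index copy of $\ZZ^2$, contradicting the cyclic-abelian-subgroup condition. If it is spherical and non-cyclic, the group is finite and non-cyclic, whereas any finite open curve orbifold group has at most one finite cyclic factor and trivial free part (the free product of two or more nontrivial groups is infinite by normal-form arguments), hence is itself cyclic. In each case one reaches a contradiction, establishing $r=0\Leftrightarrow r'=0$.

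If $r,r'\geq 1$, both groups decompose as free products of freely indecomposable factors, namely $2g+r-1$ copies of $\ZZ$ together with the finite cyclic groups $\ZZ_{m_j}$ (and similarly on the right). By the uniqueness part of the Grushko-Kurosh theorem, such a decomposition is unique up to isomorphism and conjugation of the factors; hence $2g+r=2g'+r'$ and the multisets $\{m_j\}$ and $\{m_j'\}$ coincide, which, together with the ordering convention $m_1\leq\cdots\leq m_n$, yields $\bar m=\bar m'$ and $n=n'$, proving part~(2).

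If $r=r'=0$, both are \cco\ groups, and the sign of the isomorphism invariant $\chi_{g,\bar m}$ (Remark~\ref{rem:chiIsoClass}) sorts $\GG_{g,\bar m}$ into the spherical, Euclidean, or hyperbolic sub-case, each handled separately. In the spherical case, the finite non-cyclic groups in List~\ref{list} are pairwise non-isomorphic (by direct comparison of orders and of the number of involutions). In the Euclidean case, the five groups $\GG_1$, $\GG_{0,(2,2,2,2)}$, $\GG_{0,(3,3,3)}$, $\GG_{0,(2,4,4)}$, and $\GG_{0,(2,3,6)}$ are pairwise non-isomorphic wallpaper groups. In the hyperbolic case, the free rank of $H_1(\GG_{g,\bar m};\ZZ)$ is $2g$, read off directly from the presentation~\eqref{eq:Ggm}; the multiplicities $\bar m$ are then recovered as the multiset of orders of the conjugacy classes of maximal finite cyclic subgroups, using parts~(2) and~(3) of Remark~\ref{rem:propertieshyperbolic}.

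The main technical obstacle appears in the hyperbolic step: one must verify that distinct indices $j$ yield distinct conjugacy classes of the subgroups $\langle x_j\rangle$, so that the multiset of orders of maximal finite cyclic subgroups is in fact $\bar m$ rather than a proper sub-multiset. This is the classical signature-rigidity of cocompact Fuchsian groups, extractable from the Greenberg-type results referenced in Remark~\ref{rem:propertieshyperbolic}. The remaining ingredients (Grushko-Kurosh uniqueness and the enumeration of the finite and Euclidean cases) are routine.
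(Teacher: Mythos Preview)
Your overall architecture matches the paper's: separate the open and compact cases, handle the open case by Grushko--Kurosh uniqueness, and in the compact case split according to the sign of $\chi_{g,\bar m}$. Your argument for $r=0\Leftrightarrow r'=0$ is slightly more conceptual than the paper's (you use virtual freeness and the $\ZZ^2$-subgroup obstruction, whereas the paper just compares the torsion-free finite-index subgroups produced by Lemma~\ref{lem:Fenchel} on both sides and observes that a nontrivial surface group cannot be free), but the content is the same.

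The genuine divergence is in the hyperbolic compact case. You want to recover $\bar m$ as the multiset of orders of conjugacy classes of maximal finite cyclic subgroups, and you correctly flag that this requires knowing the $\langle x_j\rangle$ are pairwise non-conjugate. That fact is classical signature rigidity for cocompact Fuchsian groups, but it is \emph{not} contained in Remark~\ref{rem:propertieshyperbolic} as stated: parts~\eqref{part:finite} and~\eqref{part:order} tell you every maximal finite cyclic subgroup is conjugate to some $\langle x_j\rangle$ of order $m_j$, but not that different $j$'s give different conjugacy classes. If you are willing to invoke signature rigidity as a black box (e.g.\ via the geometric fact that each $x_j$ fixes a distinct cone point of the quotient orbifold), your argument is complete and shorter than the paper's; but you should cite it precisely rather than say it is ``extractable'' from Greenberg.

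The paper avoids this black box and instead proceeds more explicitly: it first establishes $n=n'$ (quoting \cite[Prop.~2.11]{ACM-characteristic} when $n$ or $n'\notin\{0,1,2\}$, and using Remark~\ref{rem:chiIsoClass} together with Remark~\ref{rem:propertieshyperbolic}\eqref{part:finite},\eqref{part:order} for the small cases), then runs an induction that peels off the largest torsion order $m_n=m_n'$ and reduces to a shorter tuple. The base of the induction is the triangle-group case $\GG_{0,(m_1,m_2,m_3)}\cong\GG_{0,(m_1',m_2',m_3')}$, which the paper settles by a short number-theoretic argument combining the $\chi$-invariant (giving $\frac{1}{m_1}+\frac{1}{m_2}=\frac{1}{m_1'}+\frac{1}{m_2'}$) with the equality $\gcd(m_1,m_2)=\gcd(m_1',m_2')$ coming from the cyclic quotient $\GG_{0,(m_1,m_2)}$. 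Your route is cleaner once signature rigidity is on the table; the paper's route is more self-contained given only the tools assembled in Section~\ref{ss:cogroups}.
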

\begin{proof}
	Let us first show that, if $r=0$, then $r'=0$. We argue by contradiction. Suppose that $\GG_{g,(r,\bar m)}\cong\GG_{g',(r',\bar m')}$ is not finite cyclic and that $r'>0$. Then, $\GG_{g',(r',\bar m')}\cong\FF_{2g'+r'-1}*\ZZ_{m_1'}*\ldots*\ZZ_{m_{n'}'}$. Let $N$ be a finite index subgroup of $\GG_{g,(r,\bar m)}$ that doesn't contain any torsion elements other than the identity, as given by Lemma~\ref{lem:Fenchel}. Let $N'$ be the normal subgroup of $\GG_{g',(r',\bar m')}$ corresponding to $N$ by the isomorphism $\GG_{g,(r,\bar m)}\cong\GG_{g',(r',\bar m')}$. By Lemma~\ref{lem:Fenchel}, $N$ is the fundamental group of a smooth projective curve, and $N'$ is a free group. Since $N\cong N'$, this is only possible if $N$ and $N'$ are both trivial. In particular, $\GG_{g,(r,\bar m)}\cong\GG_{g',(r',\bar m')}$ is finite, and, since $\GG_{g',(r',\bar m')}$ is a free product of cyclic groups, that implies that $\GG_{g,(r,\bar m)}\cong\GG_{g',(r',\bar m')}$ is a finite cyclic group. Hence, if $r=0$, then $r'=0$. This also shows that if $r\geq 1$, then $r'\geq 1$.
	
	The result for $r,r'\geq 1$ follows from the classification of finitely generated free products of cyclic groups. Hence, it suffices to show the result for $r=r'=0$.
	
	Suppose that $\GG_{g,\bar m}\cong \GG_{g',\bar m'}$. If $\GG_{g,\bar m}$ is finite (but not cyclic), the result follows from List~\ref{list}. From now on,
we will assume that $\GG_{g,\bar m}$ is infinite.

If $\chi_{g,\bar m}=0$, this follows from the classification of wallpaper groups. Hence, we may assume that
$\chi_{g,\bar m}<0$.

The equality $g=g'$ is obtained by looking at the torsion-free parts of the abelianizations of both groups. The equality $n=n'$ can be obtained from \cite[Proposition 2.11]{ACM-characteristic} if $n$ or $n'$ are not in $\{0,1,2\}$. If $n,n'\in\{0,1,2\}$, the equality $n=n'$ can be obtained from Remark~\ref{rem:propertieshyperbolic} (parts \eqref{part:finite} and~\eqref{part:order}) and Remark~\ref{rem:chiIsoClass}. In particular, this
implies the result for $n=0$, and by Remark~\ref{rem:chiIsoClass}, also for $n\geq 1$. Suppose that the result is true for a certain $n\geq 1$, and that $\GG_{g,(m_1,\ldots,m_{n+1})}\cong \GG_{g,(m_1',\ldots,m_{n+1}')}$, where $\chi_{g,\bar m}<0$. By parts~\eqref{part:finite} and~\eqref{part:order} in
Remark~\ref{rem:propertieshyperbolic} $m_n=m'_n$, since that is the maximum order that a torsion element can
have in the group. The equality $m_n=m'_n$ and Remark~\ref{rem:propertieshyperbolic}\eqref{part:finite} can be used to show that
$\GG_{g,(m_1,\ldots,m_{n})}\cong \GG_{g,(m_1',\ldots,m_{n}')}$. If $n\geq 3$ or $g>0$, $\GG_{g,(m_1,\ldots,m_{n})}$ is not a cyclic groups and the result follows by induction.
Hence, it suffices to prove that, if $\GG_{0,(m_1,m_2,m_3)}\cong \GG_{0,(m_1',m_2',m_3')}$ are
hyperbolic for some $m_1$, $m_2$, $m_3$, $m_1'$, $m_2'$, $m_3'\in\ZZ_{\geq 2}$ with $m_1\leq m_2\leq m_3$ and
$m_1'\leq m_2'\leq m_3'$, then $m_i=m_i'$ for all $i=1, 2,3$. We already know that $m_3=m_3'$. By
Remark~\ref{rem:chiIsoClass}
\begin{equation}\label{eq:isoCCO1}
\frac{1}{m_1}+\frac{1}{m_2}=\frac{1}{m_1'}+\frac{1}{m_2'}.
\end{equation}
Also, since $\GG_{0,(m_1,m_{2})}\cong \GG_{0,(m_1',m_{2}')}$, List~\ref{list} implies that
$
\gcd(m_1,m_2)=\gcd(m_1',m_2')
$. 
Let $d$ be this greatest common divisor, and let $n_i=\frac{m_i}{d}$ and $n_i'=\frac{m_i'}{d}$ for $i=1,2$. Equation~\eqref{eq:isoCCO1} yields that $$\frac{n_1+n_2}{n_1n_2}=\frac{n_1'+n_2'}{n_1'n_2'},$$ and since both fractions are reduced,
\begin{equation}\label{eq:isoCCO2}
n_1+n_2=n_1'+n_2',\quad n_1n_2=n_1'n_2'.
\end{equation}
Since $\gcd(n_1,n_2)=\gcd(n_1',n_2')=1$, the last equation in \eqref{eq:isoCCO2} implies that there exist pairwise coprime positive integers $p,q,r,s$ such that $n_1=pq$, $n_2=rs$, $n_1'=pr$, $n_2'=qs$, in which case the first equation in \eqref{eq:isoCCO2} implies that $p(q-r)=s(q-r)$, which can only happen if $q=r=1$ or if $p=s=1$. Hence, $n_i=n_i'$ for $i=1,2$, which implies $m_i=m_i'$ for $i=1,2$.
\end{proof}

\begin{lemma}\label{lem:hopfian}
	Curve orbifold groups are Hopfian.
\end{lemma}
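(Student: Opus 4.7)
The plan is to invoke Malcev's theorem: every finitely generated residually finite group is Hopfian. Curve orbifold groups are finitely generated by their defining presentation~\eqref{eq:Fgroup}, so it suffices to prove that each $\GG_{g,(r,\bar m)}$ is residually finite. The finite curve orbifold groups (the finite cyclic groups and the spherical triangle groups of List~\ref{list}) are Hopfian for trivial reasons. In the open case $r \geq 1$, the group is a finitely generated free product of cyclic groups by Remark~\ref{rem:freeProduct}, hence residually finite by Gruenberg's classical theorem on free products of residually finite groups.

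For an infinite compact curve orbifold group $\GG_{g,\bar m}$, I would apply Lemma~\ref{lem:Fenchel} to obtain a torsion-free finite-index normal subgroup $N \trianglelefteq \GG_{g,\bar m}$ isomorphic to the fundamental group of a smooth projective curve of genus $\geq 1$. Such surface groups are well-known to be residually finite: in the Euclidean case $N \cong \ZZ^2$, while in the hyperbolic case $N$ embeds into $\SL(2,\RR)$, so the second Malcev theorem (that finitely generated linear groups are residually finite) applies. Residual finiteness then transfers from $N$ to $\GG_{g,\bar m}$ by a standard argument: given $1 \neq g \in \GG_{g,\bar m}$, either $g \notin N$ and the finite quotient $\GG_{g,\bar m}/N$ separates $g$, or $g \in N$, in which case one chooses a finite-index normal subgroup $K \trianglelefteq N$ with $g \notin K$ and intersects its $\GG_{g,\bar m}$-conjugates. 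This is a finite intersection, since $N$ is finitely generated by Reidemeister-Schreier and therefore has only finitely many subgroups of any given finite index, and the resulting subgroup is finite-index and normal in $\GG_{g,\bar m}$ and still avoids $g$.

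No step is genuinely difficult; the main bookkeeping point is to verify that Lemma~\ref{lem:Fenchel} uniformly supplies a residually finite finite-index subgroup in every infinite compact case, including the five Euclidean wallpaper groups of List~\ref{list}. Once residual finiteness is established in all three regimes (finite, open, compact infinite), Malcev's theorem delivers the Hopfian conclusion directly.
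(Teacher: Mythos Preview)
Your proposal is correct and follows essentially the same route as the paper: invoke Malcev's theorem after establishing residual finiteness via a finite-index normal subgroup that is either free or a surface group, supplied by Lemma~\ref{lem:Fenchel}. The only difference is organizational: the paper applies Lemma~\ref{lem:Fenchel} uniformly to all curve orbifold groups at once (it yields a free subgroup when $r\geq 1$ and a surface group when $r=0$), whereas you split into cases and invoke Gruenberg's theorem separately for the open case---an unnecessary detour, since Lemma~\ref{lem:Fenchel} already covers it.
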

\begin{proof}
	Let $G$ be a curve orbifold group. By Lemma~\ref{lem:Fenchel}, $G$ contains a finite index normal subgroup $N$ which is either a free group or the fundamental group of a smooth projective curve of genus $g\geq 0$. In particular, $N$ is residually finite, so $G$ is also residually finite and, since $G$ is finitely generated, it is Hopfian (cf.~\cite[Thm. 4.10]{Stillwell-combinatorial}).
\end{proof}

\subsection{Property \NINF}
The notion of \NINF\ groups is due to Catanese and it will be very useful
in the rest of the paper. In order to set definitions and basic consequences, we present a brief summary
of this property following~\cite{Catanese-Fibred}.
\begin{dfn}
A group $G$ satisfies property \NINF\ if every normal non-trivial subgroup of infinite index is not finitely generated.
\end{dfn}

\begin{exam}\label{exam:NINF}
	The free group $\FF_s$ satisfies property \NINF\ for any $s\geq 0$ (\cite[Lemma 3.3]{Catanese-Fibred}).
	Also, $\GG_g$ satisfies property \NINF\ for any $g\geq 2$ (\cite[Lemma 3.4]{Catanese-Fibred}).
\end{exam}

The following result exemplifies the usefulness of the \NINF\ property.

\begin{lemma}[Lemma 3.2 in \cite{Catanese-Fibred}]\label{lem:NINF}
	Let $G$ be an infinite group satisfying property \NINF, and let $1\to A \to B\xrightarrow{\varphi} C\to 1$
	be a short exact sequence of groups such that $A$ is finitely generated, $C$ is infinite and $\varphi$ factors
	as $\varphi=\rho\circ\psi$, where $\psi:B\to G$ is surjective. Then, $\rho:G\to C$ is an isomorphism.
\end{lemma}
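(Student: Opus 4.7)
The plan is to show that $\rho$ is an isomorphism by directly producing $\ker\rho$ as a finitely generated normal subgroup of infinite index in $G$, and then invoking property \NINF\ to conclude it is trivial.

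First I would handle surjectivity. Since $\varphi = \rho\circ\psi$ is surjective and $\psi$ is surjective, $\rho$ is automatically surjective, so $G/\ker\rho \cong C$. Because $C$ is infinite by hypothesis, the kernel $K := \ker\rho$ is a normal subgroup of $G$ of infinite index.

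Next I would identify $K$ as a homomorphic image of $A$. For $b\in B$, one has $b\in\psi^{-1}(K)$ iff $\rho(\psi(b))=1$ iff $\varphi(b)=1$ iff $b\in A$, so $\psi^{-1}(K)=A$. Since $\psi$ is surjective, this gives $\psi(A)=\psi(\psi^{-1}(K))=K$. As $A$ is finitely generated by hypothesis, its image $K$ is finitely generated as well.

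At this point $K\trianglelefteq G$ is finitely generated and of infinite index in the infinite group $G$. Property \NINF\ then forces $K$ to be trivial, so $\rho$ is injective, hence an isomorphism. There is essentially no obstacle here; the only point that requires a moment's care is the computation $\psi^{-1}(K)=A$, which is immediate from the factorization $\varphi=\rho\circ\psi$, and the use of surjectivity of $\psi$ to transfer finite generation from $A$ to $K$.
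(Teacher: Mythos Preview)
Your argument is correct. The paper does not supply its own proof of this lemma; it merely cites it as Lemma~3.2 of \cite{Catanese-Fibred}, so there is nothing to compare against beyond noting that your proof is exactly the natural one: surjectivity of $\rho$ is immediate, $\psi^{-1}(\ker\rho)=\ker\varphi=A$ gives $\ker\rho=\psi(A)$ finitely generated, and since $G/\ker\rho\cong C$ is infinite, property \NINF\ forces $\ker\rho=1$.
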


\begin{lemma}\label{lem:FgroupNINF}
	Let $G$ be a curve orbifold group which is not a Euclidean \cco\ group. Then, $G$ satisfies property \NINF.
\end{lemma}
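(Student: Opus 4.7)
The plan is to split curve orbifold groups $G$ that are not Euclidean \cco\ groups into three types and, in each case, exhibit a finite-index normal subgroup $N \trianglelefteq G$ known to satisfy \NINF, then transfer the property from $N$ to $G$ by ruling out the existence of non-trivial finite normal subgroups of $G$. The three types are: (a) finite groups, namely spherical \cco\ groups and finite cyclic open curve orbifold groups (see List~\ref{list} and Remark~\ref{rem:freeProduct}); (b) infinite hyperbolic \cco\ groups; and (c) infinite open curve orbifold groups. Case (a) is immediate, since every non-trivial subgroup of a finite group has finite index, so \NINF{} holds vacuously.

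In cases (b) and (c), Lemma~\ref{lem:Fenchel} provides a torsion-free normal subgroup $N$ of finite index in $G$. In case (b), the hyperbolicity condition $\chi_{g,\bar m}<0$ together with the genus part of Lemma~\ref{lem:Fenchel} yields $N \cong \GG_{g'}$ with $g' \geq 2$. In case (c), the assumption $r \geq 1$ together with $G$ being infinite gives $\chi_{g,(r,\bar m)} \leq 0$ by Remark~\ref{rem:freeProduct}, so Lemma~\ref{lem:Fenchel} provides $N \cong \FF_{\rho}$ with $\rho \geq 1$. In both situations Example~\ref{exam:NINF} guarantees that $N$ satisfies \NINF.

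Next, $G$ itself must have no non-trivial finite normal subgroup. In case (b) this is precisely Remark~\ref{rem:propertieshyperbolic}\eqref{part:finiteNormal}. In case (c), Remark~\ref{rem:freeProduct} expresses $G$ as $\FF_s * \ZZ_{m_1} * \ldots * \ZZ_{m_n}$; if $G \cong \ZZ$ there is nothing to check, and otherwise $G$ is a non-trivial free product of non-trivial groups, and the Kurosh subgroup theorem implies that any finite subgroup is conjugate to a subgroup of some free factor. Since a normal subgroup equals its conjugates, a hypothetical non-trivial finite normal subgroup would be contained in a single free factor; but conjugating a non-trivial element of that factor by a non-trivial element of a different factor produces a reduced word of length $3$ in the free product, which cannot lie in the factor, giving a contradiction.

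Finally, the transfer argument: given $K \trianglelefteq G$ non-trivial of infinite index, suppose for contradiction that $K$ is finitely generated. Then $K \cap N$ is normal in $N$, of index at most $[G:N]<\infty$ in $K$ and hence finitely generated, and of infinite index in $N$ (since $[G:K]=\infty$ together with $[G:N]<\infty$ force $[N:K \cap N]=\infty$). By \NINF{} for $N$, we conclude $K \cap N = \{1\}$, so $K$ embeds into the finite group $G/N$ and is itself finite, contradicting the absence of non-trivial finite normal subgroups in $G$. The hard part is the verification of the no-finite-normal-subgroup condition in case (c), which hinges on the Kurosh subgroup theorem; the rest of the argument is bookkeeping around Lemma~\ref{lem:Fenchel} and Example~\ref{exam:NINF}.
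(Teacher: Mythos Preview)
Your proof is correct and follows essentially the same approach as the paper: both arguments handle the finite cases trivially, use Lemma~\ref{lem:Fenchel} to produce a torsion-free finite-index normal subgroup $N$ satisfying \NINF, verify via Remark~\ref{rem:propertieshyperbolic}\eqref{part:finiteNormal} (hyperbolic case) and Kurosh (open case) that $G$ has no non-trivial finite normal subgroups, and then run the same transfer argument from $N$ to $G$. The only cosmetic difference is that the paper phrases the transfer directly (showing $N\cap H$ is infinitely generated, hence so is $H$) while you phrase it contrapositively (assuming $K$ finitely generated and deriving $K\cap N=\{1\}$), but these are the same argument.
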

\begin{proof}
	If $G$ is a	spherical \cco\ group, the result is trivial because $G$ is finite.
	Hence, from now on, we may assume that $G$ is an infinite curve orbifold group which is not a Euclidean \cco\ group. Note that, in those cases, $G$ does not contain any finite normal subgroups: if $G$ is a hyperbolic \cco\ group, this follows from Remark~\ref{rem:propertieshyperbolic}, part~\eqref{part:finiteNormal}, and if $G$ is a finite free product of cyclic groups, the result follows from the Kurosh subgroup theorem.
	
	We start by showing that, if $G$ is an infinite curve orbifold group, then it
	has a normal subgroup $N$ of finite index satisfying property \NINF\ such that every non-trivial
	normal subgroup $H$ satisfies that $N\cap H$ is non-trivial. We distinguish two cases. By Lemma~\ref{lem:Fenchel}, there exists a finite index normal subgroup $N$ of $G$ which contains no torsion elements. Moreover, if $G$ is an open curve orbifold group (i.e. a finitely generated free product of cyclic groups), then $N$ is a free group of finite rank, and if $G$ is a hyperbolic \cco\ group, then $N\cong \GG_g$ for some $g\geq 2$. In both of these cases, $N$ satisfies property \NINF. Now, let $H$ be a non-trivial normal subgroup of $G$. If $N\cap H$ was trivial, then there would exist a monomorphism from $H$ to the finite group $G/N$, and we obtain a contradiction.
	
	Let $H$ be a non-trivial normal subgroup of $G$ of infinite index, and let $N$ be as in the previous paragraph. Since $N\cap H$ is a normal subgroup of $N$ of infinite index, the subgroup $N\cap H$
	must be infinitely generated. Consider the short exact sequence
	$$
	1\to N\cap H\to H \to H/N\cap H\to 1,
	$$
	where $H/N\cap H$ is finite. If $H$ was finitely generated, so would $N\cap H$, so $H$ is infinitely generated.
\end{proof}

\subsection{The exact sequence associated to an admissible map}

This result is well known in different settings (cf. \cite[Lemma 4.2]{Catanese-Fibred},
\cite[Corollary 2.22]{ji-Eva-orbifold}), but we include it here for the sake of completeness and to unify notation.
\begin{lemma}\label{lem:exact}
	Let $U$ be a Zariski open subset of a compact K\"ahler manifold, let $C_g$ be a smooth projective curve of genus
	$g$ and let $C_{g,r}$ an $r$-punctured $C_g$. Consider $F:U\to C_{g,r}$ an admissible map.
	Let $C_{g,(r,\bar m)}$ be the maximal orbifold structure of $C_{g,r}$ with respect to $F$ for some
	$\bar m=(m_1,\dots,m_n)$. Then, the following sequence is exact
	$$
	\pi_1(F^{-1}(P))\to\pi_1(U)\xrightarrow{F_*}\pi_1^{\orb}(C_{g,(r,\bar m)})\to 1,
	$$
	where the first arrow is induced by the inclusion of a generic fiber $F^{-1}(P)$ over~$P\in C_{g,r}$.
\end{lemma}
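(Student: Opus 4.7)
The plan is to reduce the statement to a locally trivial fibration via Verdier's generic fibration theorem (Theorem~\ref{thm:Verdier}), and then compare the resulting homotopy long exact sequence with the sought-after one. Let $B_F\subset C_{g,r}$ be the finite set of atypical values of $F$, set $V:=C_{g,r}\setminus B_F$, and let $U_V:=F^{-1}(V)$; then $F|_{U_V}:U_V\to V$ is a locally trivial fibration with connected fibers (connectedness from admissibility, Definition~\ref{dfn:admissible}). The homotopy long exact sequence produces
\[
\pi_1(F^{-1}(P))\to\pi_1(U_V)\to\pi_1(V)\to 1,
\]
and the task reduces to comparing this sequence with $\pi_1(F^{-1}(P))\to\pi_1(U)\to\pi_1^{\orb}(C_{g,(r,\bar m)})\to 1$ through the vertical surjections induced by the inclusion $U_V\hookrightarrow U$ and by $F$.

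I would next describe both vertical kernels. On the source side, passing from $U_V$ to $U$ amounts to filling back in the atypical fibers of $F$ inside $U$, which form an analytic divisor in $U$; a standard meridional van Kampen argument (applied on a Kähler compactification of $U$ with simple normal crossings boundary, provided by Theorem~\ref{thm:holomorphicExt}) shows that $\pi_1(U_V)\twoheadrightarrow\pi_1(U)$ has kernel normally generated by meridians $\delta_D\in\pi_1(U_V)$ around the irreducible components $D$ of these atypical fibers. On the target side, filling in the punctures $B_F\setminus M_+$ and then imposing the orbifold relations of~\eqref{eq:rels} gives a surjection $\pi_1(V)\twoheadrightarrow\pi_1^{\orb}(C_{g,(r,\bar m)})$ whose kernel is the normal closure of $\{\mu_Q\mid Q\in B_F\setminus M_+\}\cup\{\mu_{P_i}^{m_i}\mid P_i\in M_+\}$.

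The key compatibility is the formula $F_*(\delta_D)=\mu_{F(D)}^{m_D}$ in $\pi_1(V)$, where $m_D$ is the multiplicity of $D$ in $F^*(F(D))$. By maximality of the orbifold structure, $\varphi(Q)=\gcd\{m_D\mid D\subset F^{-1}(Q)\}$ for every $Q\in F(U)$: if $Q\notin M_+$, the $m_D$'s are jointly coprime and a Bezout combination of the $F_*(\delta_D)$'s equals $\mu_Q$; if $Q=P_i\in M_+$, each $F_*(\delta_D)$ lies in $\langle\mu_{P_i}^{m_i}\rangle$. Hence $F_*$ sends the source-side kernel surjectively onto the target-side kernel, and a diagram chase on
\[
\begin{array}{ccccccc}
\pi_1(F^{-1}(P))&\to&\pi_1(U_V)&\to&\pi_1(V)&\to&1\\
& &\downarrow& &\downarrow& & \\
\pi_1(F^{-1}(P))&\to&\pi_1(U)&\xrightarrow{F_*}&\pi_1^{\orb}(C_{g,(r,\bar m)})&\to&1
\end{array}
\]
upgrades exactness of the top row to exactness of the bottom row: given $\gamma\in\ker F_*$, lift to $\tilde\gamma\in\pi_1(U_V)$, then modify $\tilde\gamma$ by lifts of kernel elements on the target side (which themselves lie in the kernel of $\pi_1(U_V)\twoheadrightarrow\pi_1(U)$) to obtain a representative mapping to $1$ in $\pi_1(V)$, and finally invoke exactness of the top row to express it as the image of a class in $\pi_1(F^{-1}(P))$.

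The main obstacle I expect is the clean justification of the source-side meridional presentation, since the atypical fibers of $F$ in $U$ are analytic divisors that need not have normal crossings in $U$ itself. I would handle this by extending $F$ to $\widehat F:\widehat X\to\overline C$ with $\widehat X\setminus U$ simple normal crossings via Theorem~\ref{thm:holomorphicExt}, refining $\widehat X$ if necessary so that $(\widehat X\setminus U)\cup\widehat F^{-1}(B_F)^{\red}$ becomes simple normal crossings, applying the standard meridional presentation for the complement of a SNC divisor in a smooth compact Kähler manifold, and then recovering $\pi_1(U)$ by quotienting out the meridians coming from the components of $\widehat X\setminus U$.
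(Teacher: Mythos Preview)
Your proof is correct and follows the classical ``remove all atypical fibers, then compare with the fibration exact sequence'' strategy. This is a genuinely different route from the paper's. The paper does not redo the open case at all: for $r\ge 1$ it quotes \cite[Corollary~2.22]{ji-Eva-orbifold} directly, and for $r=0$ it reduces to the $r=1$ case by removing a \emph{single generic} fiber $F^{-1}(Q)$, obtaining $U_Q\to C_g\setminus\{Q\}$. It then invokes an explicit presentation of $\pi_1(U_Q)/\im(\iota_*')$ from \cite[Lemma~4.1]{ji-Eva-orbifold}, observes that killing the meridian of $F^{-1}(Q)$ yields a group abstractly isomorphic to $\pi_1^{\orb}(C_{g,(0,\bar m)})$, and finishes by appealing to the Hopfian property of curve orbifold groups (Lemma~\ref{lem:hopfian}) to upgrade the resulting self-surjection to an isomorphism. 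Your approach is more self-contained (no external presentation lemma, no Hopfianity), handles all $r$ uniformly, and makes the role of the multiplicities transparent through the Bezout step; the cost is the careful justification of the meridional presentation of $\ker(\pi_1(U_V)\twoheadrightarrow\pi_1(U))$, which you correctly flag and resolve via an SNC compactification. The paper's argument is shorter precisely because it outsources that analysis to \cite{ji-Eva-orbifold} and replaces the diagram chase by the Hopfian trick.
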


\begin{proof}
The case where $r\geq 1$ is proved in \cite[Corollary 2.22]{ji-Eva-orbifold}. 

Suppose that $r=0$. Let $B_F$ be the (finite) set of atypical values of $F$, as in Theorem~\ref{thm:Verdier}.
Let $Q\in C_{g}\setminus(B_F\cup\{P\})$ and let $U_Q=U\setminus F^{-1}(Q)$.  Consider the commutative diagram:
	$$
	\begin{tikzcd}
		\pi_1(F^{-1}(P))\arrow[r,"\iota_*'"]\arrow[d,"\cong"] & \pi_1(U_Q) \arrow[r, "F_*"]\arrow[d, two heads] & 
		\pi_1^{\orb}(C_{g,(1,\bar m)})\arrow[r]\arrow[d, two heads] & 1\\
		\pi_1(F^{-1}(P))\arrow[r,"\iota_*"] & \pi_1(U)\arrow[r,  "F_*"] & \pi_1^{\orb}(C_{g,(0,\bar m)})
	\end{tikzcd}
	$$
	where $\iota':F^{-1}(P)\hookrightarrow\pi_1(U_Q)$ and
	$\iota:F^{-1}(P)\hookrightarrow\pi_1(U)$ are the inclusions. Here, the vertical arrows are all induced by inclusion,
	and the top row is exact by \cite[Corollary 2.22]{ji-Eva-orbifold}. 
	The surjectivity of $F_*$ in the bottom row follows from the diagram. 
	Also, $\im(\iota_*)\subseteq\ker(F_*:\pi_1(U)\to \pi_1^{\orb}(C_{g,(0,\bar m)}))$.
	
	Let us prove the other inclusion. Since $\im(\iota_*)$ is a quotient of a normal subgroup of 
	$\pi_1(U_Q)$, it is a normal subgroup of $\pi_1(U)$. Using the presentation of $\pi_1(U_Q)$ given by
	\cite[Lemma 4.1]{ji-Eva-orbifold}, we see that $\pi_1(U_Q)/\im(\iota_*')$ has a presentation given by
	$$
	\langle \gamma_1,\ldots,\gamma_{n+2g}\mid \gamma_1^{m_1}=1,\ldots,\gamma_n^{m_n}=1\rangle
	$$
	such that
	\begin{enumerate}
		\item $F_*(\gamma_j)$ is a positively oriented meridian around a point $P_j\in C_g$, for $j=1,\ldots,n$.
		\item $\tilde \gamma:= \left(\prod_{i=1}^g[\gamma_{n+2i-1},\gamma_{n+2i}]\right)\left(\gamma_1\cdot\ldots\cdot\gamma_n\right)^{-1}$ is the class in $\pi_1(U_Q)/\im(\iota_*')$ of a positively oriented meridian around $F^{-1}(Q)$, and
		\item $F_*(\tilde\gamma)$ is a positively oriented meridian around $Q$.
	\end{enumerate}
	Thus, to get from $\pi_1(U_Q)$ to $\pi_1(U)$, one needs to quotient by the normal closure of the subgroup generated by $\tilde \gamma$. Hence, $\pi_1(U)/\im(\iota_*)$ is isomorphic to
	$$
	\langle \gamma_1,\ldots,\gamma_{n+2g}\mid \gamma_1^{m_1}=1,\ldots,\gamma_n^{m_n}=1, \left(\prod_{i=1}^g[\gamma_{n+2i-1},\gamma_{n+2i}]\right)\left(\gamma_1\cdot\ldots\cdot\gamma_n\right)^{-1}=1\rangle,
	$$
	which is isomorphic to $\pi_1^{\orb}(C_{g,(0,\bar m)})$, and by Lemma~\ref{lem:hopfian},
	$$F_*:\pi_1(U)/\im(\iota_*)\twoheadrightarrow \pi_1^{\orb}(C_{g,(0,\bar m)})$$
	is an isomorphism.
\end{proof}

\begin{rem}\label{rem:fg}
Let $F:U\to C$ be an admissible map, where $U$ is a Zariski open set of a compact K\"ahler manifold and $C$ is a smooth quasi-projective curve. As a consequence of Lemma~\ref{lem:exact}, we obtain that the kernel of the morphism induced between (orbifold) fundamental groups by an admissible map (using the maximal orbifold structure on $C$ induced by $F$) is finitely generated. Hence, the finitely generated kernel assumption on $\psi$ appearing in the Orbifold Geometric Realizability Problem~\ref{pbm:orbifoldgeometricmorphism} is necessary for the geometric realization of $\psi$ by an admissible map.
\end{rem}

\section{Main Theorem}\label{s:main}

The purpose of this section is to prove the Realizability Theorem~\ref{thm:GOPintro}. We start by proving a
generalization of Theorem~\ref{thm:summaryCatanese}. In particular, the case where $U$ is non-compact and
$G=\GG_g$ for $g\geq 2$ is not covered by Theorem~\ref{thm:summaryCatanese}. Moreover, we also reprove
the cases covered in Theorem~\ref{thm:summaryCatanese} (proved in \cite{Catanese-Fibred}) using different methods.
More concretely, we do not use the isotropic subspace theorem used in \cite{Catanese-Fibred}, and use Arapura's
Proposition~\ref{prop:Arapura} instead.

Before we begin with the statement and proofs of the main results, note that base points in the Realizability
Problems~\ref{pbm:geometricmorphism} and~\ref{pbm:orbifoldgeometricmorphism} are not needed.

\begin{rem}\label{rem:basepoint}
	Let $u,u'\in U$. Let $\psi:\pi_1(U,u)\to G$ be an epimorphism, let $h:\pi_1(U,u')\to\pi_1(U,u)$
	be a particular change-of-base-point isomorphism constructed from a path from $u'$ to $u$ and let
	$\psi':=\psi\circ h$. Suppose that $F:U\to C$ is such that $F_*:\pi_1(U,u)\to\pi_1(C,F(u))$, coincides
	with $\psi$ up to isomorphism in the target. Then, $F_*:\pi_1(U,u')\to\pi_1(C,F(u'))$ also coincides
	with $\psi'$ up to isomorphism in the target. An analogous idea works for the orbifold case considering
	$\pi_1^{\orb}(C_{g,(r,\bar m)},c)$ as a quotient of $\pi_1(C_{g,r+n},c)$, where $C_{g,r+n}$ is a smooth projective genus $g$ curve with $r+n$ punctures.
\end{rem}

\begin{thm}\label{thm:mainNoMultiple}
	Let $U$ be a Zariski open subset of a compact K\"ahler manifold.
	\begin{enumerate}
	\item\label{item:noMult1} Suppose that there exists an
	epimorphism $\psi:\pi_1(U)\to \GG_g$ with finitely generated kernel $K$, where $g\geq 2$.
	Then, there exists a smooth projective
	curve $C_g$ of genus $g$ and an admissible map $F:U\to C_g$ without multiple fibers such that
	$$
	F_*: \pi_1(U)\to \GG_g,
	$$
where $F_*$ coincides with $\psi$ up to an isomorphism of the target.
\item\label{item:noMult2} Suppose that there exists an
epimorphism $\psi:\pi_1(U)\to \FF_s$ with finitely generated kernel $K$, where $s\geq 2$. Then, there exist
$r\geq 1$ and $g\geq 0$ such that $2g+r-1=s$, $C_{g,r}$ a smooth quasi-projective curve of genus $g$ with
$r$ points removed, and an admissible map $F:U\to C_{g,r}$ without multiple fibers such that
$$
F_*: \pi_1(U)\to \FF_s,
$$
where $F_*$ coincides with $\psi$ up to an isomorphism of the target.
\end{enumerate}
Moreover, in any case, one such $F$ is unique up to isomorphism of algebraic varieties in the target.
\end{thm}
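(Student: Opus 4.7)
The plan is to produce the admissible map from $\psi$ using Arapura's characterization of positive-dimensional components of $\Sigma_1$ (Proposition~\ref{prop:Arapura}), and then to identify the induced map on fundamental groups with $\psi$ using property \NINF\ of both $G$ and of the orbifold fundamental group of a curve of general type (Example~\ref{exam:NINF}, Lemma~\ref{lem:FgroupNINF}), via Lemma~\ref{lem:NINF}.

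First, I would embed $\psi^*H^1(G,\CC^*)$ in $\Sigma_1(U)$ using the Lyndon--Hochschild--Serre spectral sequence for $1\to K\to\pi_1(U)\to G\to 1$. For nontrivial $\chi\in H^1(G,\CC^*)$, the spectral sequence applied to the local system $\CC_{\psi^*\chi}$ yields an injection $H^1(G,\CC_\chi)\hookrightarrow H^1(U,\CC_{\psi^*\chi})$, and since $G\in\{\GG_g,\FF_s\}$ is a curve group of general type, $\dim H^1(G,\CC_\chi)\geq 1$. Thus $\psi^*H^1(G,\CC^*)$ is a positive-dimensional connected subtorus of $\Sigma_1(U)$ through the identity, and by Proposition~\ref{prop:Arapura} it lies in a component $V=F^*H^1(C,\CC^*)$ coming from an admissible map $F:U\to C$ to a smooth quasi-projective curve of general type. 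Letting $\bar m$ denote the maximal orbifold structure on $C$ with respect to $F$, Lemma~\ref{lem:exact} would give the exact sequence $\pi_1(F^{-1}(P))\xrightarrow{\iota_*}\pi_1(U)\xrightarrow{F_*}\pi_1^{\orb}(C_{\bar m})\to 1$, with $\ker F_*$ finitely generated by Remark~\ref{rem:fg}.

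Second, I would show $\ker F_*\subseteq\ker\psi$. The inclusion $\psi^*H^1(G,\CC^*)\subseteq F^*H^1(C,\CC^*)$ implies that for every character $\chi$ of $G$ there is a character $\eta_\chi$ of $C$ with $\psi^*\chi=F^*\eta_\chi$; evaluating on $\gamma\in\ker F_*$ gives $\chi(\psi(\gamma))=\eta_\chi(F_*\gamma)=1$ for all $\chi$, so $N:=\psi(\ker F_*)\subseteq[G,G]$. Since $G^{\mathrm{ab}}$ is infinite, any subgroup of $[G,G]$ has infinite index in $G$, so $N$ has infinite index; on the other hand $N$ is finitely generated as the image of $\ker F_*$. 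If $N$ were nontrivial, this would contradict property \NINF\ of $G$ (Example~\ref{exam:NINF}). Hence $N=1$, and $\psi$ factors as $\psi=\rho\circ F_*$ with $\rho:\pi_1^{\orb}(C_{\bar m})\twoheadrightarrow G$ surjective.

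Third, I would apply Lemma~\ref{lem:NINF} to $1\to\ker\psi\to\pi_1(U)\to G\to 1$ using the surjection $F_*:\pi_1(U)\twoheadrightarrow\pi_1^{\orb}(C_{\bar m})$ (whose target is infinite and \NINF\ by Lemma~\ref{lem:FgroupNINF}, since $C$ is of general type) and the factorization $\psi=\rho\circ F_*$, concluding that $\rho$ is an isomorphism. Since $G$ is torsion-free, so is $\pi_1^{\orb}(C_{\bar m})\cong G$, forcing $\bar m=\emptyset$ and hence no multiple fibers for $F$. A rank comparison of abelianizations then identifies $C$ as projective of genus $g$ in part~\ref{item:noMult1} and as $C_{g,r}$ with $2g+r-1=s$ in part~\ref{item:noMult2}. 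Uniqueness up to isomorphism of algebraic varieties in the target follows from the second statement of Proposition~\ref{prop:Arapura}, since any two admissible maps realizing $\psi$ produce the same component $V$ of $\Sigma_1(U)$. The main obstacle is the nonabelian factorization in the second step: the characteristic-variety inclusion is an abelian statement about rank-one local systems, and extracting from it the nonabelian factorization $\psi=\rho\circ F_*$ hinges on combining property \NINF\ of $G$ with the finite generation of $\ker F_*$ from Remark~\ref{rem:fg}.
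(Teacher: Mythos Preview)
Your argument is correct and follows the same overall strategy as the paper: embed $\psi^*H^1(G,\CC^*)$ into $\Sigma_1(U)$, invoke Arapura's Proposition~\ref{prop:Arapura} to produce an admissible $F:U\to C$ with $C$ of general type, and then use property \NINF\ together with Lemma~\ref{lem:NINF} to identify $F_*$ with $\psi$ and rule out multiple fibers.

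The tactical execution of the key step differs from the paper, however, and in a direction worth noting. The paper proves $\ker\psi\subseteq\ker F_*^{\orb}$ by showing $F_*^{\orb}(K)=1$: it first argues by contradiction that $F_*(K)\trianglelefteq\pi_1(C_{g',r'})$ is trivial (if not, \NINF\ forces it to have finite index, and a non-torsion character of $C$ then restricts nontrivially to $K$, contradicting $K=\ker\psi$), and then upgrades to $F_*^{\orb}(K)=1$ via a second \NINF\ argument for the orbifold group. This gives a factorization $F_*^{\orb}=\rho'\circ\psi$, and Lemma~\ref{lem:NINF} (applied with the \NINF\ group being $G$) shows $\rho'$ is an isomorphism. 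You instead prove the \emph{dual} inclusion $\ker F_*^{\orb}\subseteq\ker\psi$: the character containment $\psi^*H^1(G,\CC^*)\subseteq F^*H^1(C,\CC^*)$ forces $\psi(\ker F_*^{\orb})\subseteq[G,G]$, and a single application of \NINF\ for $G$ kills this finitely generated infinite-index normal subgroup. You then apply Lemma~\ref{lem:NINF} with the \NINF\ group being $\pi_1^{\orb}(C_{\bar m})$ (legitimate since $C$ is of general type, hence $\chi_{g',(r',\bar m)}<0$). Your route avoids the two-step and the contradiction argument, at the cost of invoking Lemma~\ref{lem:FgroupNINF} rather than only Example~\ref{exam:NINF}. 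Both approaches yield the same uniqueness conclusion via Proposition~\ref{prop:Arapura}, since once $\ker F_*=\ker\psi$ the component $F^*H^1(C,\CC^*)$ is determined by $\psi$ alone. One small point: your ``rank comparison of abelianizations'' for identifying $(g',r')$ implicitly uses that $\GG_g$ is not free for $g\geq 2$ and $\FF_s$ is not a surface group for $s\geq 2$; the paper packages this into Theorem~\ref{thm:CCOnotFreeProduct}, which you could cite directly.
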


\begin{proof}
Let $G=\GG_g, \FF_s$, depending on whether we are proving the first or the second statement.
Let $l=2g$ in the first case, and $l=s$ in the second case.
Since $\psi$ is an epimorphism, the induced map
$\psi^*:\Hom(G,\CC^*)\to \Hom(\pi_1(U),\CC^*)$
is a monomorphism and hence $\psi^*$ induces an injective morphism
$\Sigma_{1}(G)\cong (\CC^*)^{l}\to \Sigma_{1}(U)$.
This implies that $\Sigma_{1}(U)$ has a positive dimensional irreducible component $W$ containing
the image of $\psi^*$. By Proposition~\ref{prop:Arapura}, there exists
an admissible map $F:U\to C_{g',r'}$ onto a curve $C_{g',r'}$ of general type, which is a smooth
quasi-projective curve of genus $g'$ with $r'$ points removed, such that $W$ is the pullback by $F^*$ of
$\Sigma_{1}(C_{g',r'})=H^1(C_{g',r'},\CC^*)$. Note that $\pi_1(C_{g',r'})$ is an infinite group
satisfying property \NINF\ by Example~\ref{exam:NINF}.

Let $C_{g',(r',\bar m)}$ be the maximal orbifold structure with respect to $F$. The original morphism $F_*$
factors through $F^{\orb}_*:\pi_1(U)\twoheadrightarrow \pi_1^{\orb}\left(C_{g',(r',\bar m)}\right)$.
Note that, abusing notation, we have previously denoted $F^{\orb}_*$ by $F_*$,
but we now use $F^{\orb}_*$ to avoid ambiguity.

Let us show that, if $F_*^{\orb}(K)$ is trivial,
then statements \eqref{item:noMult1} and \eqref{item:noMult2}
hold. Indeed, suppose that $F_*^{\orb}(K)$ is trivial. Then, $F_*^{\orb}$  factors through $\psi$.
Since the kernel $L$ of $F_*^{\orb}$ is finitely generated by Lemma~\ref{lem:exact},
Lemma~\ref{lem:NINF} implies that $F_*^{\orb}$ and $\psi$ coincide up to isomorphism in the target.
In particular, $\pi_1^{\orb}\left(C_{g',(r',\bar m)}\right)\cong G$. If $G\cong\GG_g$, then
Theorem~\ref{thm:CCOnotFreeProduct} implies that $r'=0$, $g=g'$, and that $F$ has no multiple fibers,
so $F$ and $F^{\orb}$ coincide and \eqref{item:noMult1} holds. If $G\cong \FF_s$,
Theorem~\ref{thm:CCOnotFreeProduct} implies that
$r'\geq 1$, $2g'+r'-1=s$, and that $F$ has no multiple fibers, so $F$ and $F^{\orb}$ coincide
and \eqref{item:noMult2} holds. Hence, we just need to show that $F_*^{\orb}(K)$ is trivial.

Let us consider the normal subgroup $F_*(K)$ of $\pi_1(C_{g',r'})$. Suppose that $F_*(K)$
is not trivial. Since it is finitely generated and $\pi_1(C_{g',r'})$ satisfies the \NINF\
property by Example~\ref{exam:NINF}, $F_*(K)$ has finite index in $\pi_1(C_{g',r'})$. Let us
show that this cannot happen, arguing by contradiction.

Let $\iota:K\to \pi_1(U)$ be the inclusion. Note that, since $F_*(K)$ has finite index in
$\pi_1(C_{g',r'})$ by the \NINF\ property, every non-torsion character pulls back to a non trivial
character through the inclusion $j:F_*(K)\hookrightarrow \pi_1(C_{g',r'})$.
Since $F^*:\Sigma_{1}(C_{g',r'})\to W$ is a bijection, and the image of
$\psi^{*}:\Sigma_{1}(C_{g,r})\hookrightarrow \Sigma_{1}(U)$ is positive dimensional and contained
in $W$ by the definition of $W$, there exists a non-torsion character $\xi\in\Sigma_{1}(C_{g',r'})$
such that $F^*(\xi)\in\mathrm{Im}(\psi^*)$. The pull-back of $\xi$ through $j$ and
$(F_*)_|:K\twoheadrightarrow F_*(K)$ is a non-trivial element of
$\iota^*(\mathrm{Im}(\psi^*))\subset \iota^*(W)$, so $\iota^*(\mathrm{Im}(\psi^*))$ is not
trivial. However, since  $\psi\circ\iota$ is trivial, we get that $\iota^*(\mathrm{Im}(\psi^*))$
is trivial, which is a contradiction. Hence, $F_*(K)$ is trivial.

Let us use that $F_*(K)$ is trivial to prove that $F^{\orb}_*(K)$ is trivial.
Note that $F^{\orb}_*(K)\trianglelefteq \pi_1^{\orb}\left(C_{g',(r',\bar m)}\right)$ is contained in the kernel
of $\pi_1^{\orb}\left(C_{g',(r',\bar m)}\right)\twoheadrightarrow \pi_1(C_{g',r'})$, so
$F^{\orb}_*(K)$ has infinite index as a subgroup of $\pi_1^{\orb}\left(C_{g',(r',\bar m)}\right)$.
Since $F^{\orb}_*(K)$ is finitely generated and $\pi_1^{\orb}\left(C_{g',(r',\bar m)}\right)$ satisfies
property \NINF\ by Lemma~\ref{lem:FgroupNINF}, this implies that $F^{\orb}_*(K)$ is trivial.

The ``moreover'' part of the statement follows from Proposition~\ref{prop:Arapura}.
\end{proof}

The strategy to prove Theorem~\ref{thm:GOPintro} is as follows: from an epimorphism $\psi$ as in the
Orbifold Geometric Realizability Problem~\ref{pbm:orbifoldgeometricmorphism}, we build a finite cover
$\widetilde U$ of $U$ that we can apply Theorem~\ref{thm:mainNoMultiple} to and get an admissible map
$f:\widetilde U\to D$ to a quasi-projective curve $D$ with no multiple fibers. Then, we will use $f$
to find the desired $F:U\to C$ as in the Orbifold Geometric Realizability Problem~\ref{pbm:orbifoldgeometricmorphism}.
The following result describes how to obtain such a map $f$,
and provides the main tools that we will use for the construction of~$F$.

\begin{lemma}\label{lem:cover}
Let $U$ be a Zariski open set in a compact K\"ahler manifold, and let $\psi:\pi_1(U)\to\GG_{g,(r,\bar m)}$ be
an epimorphism such that its kernel $K$ is finitely generated, with $\chi_{g,(r,\bar m)}<0$. Let $G$ be a finite index normal
subgroup of $\GG_{g,(r,\bar m)}$ which contains no torsion elements other than the identity, as given by
Lemma~\ref{lem:Fenchel}, and let $p:\widetilde U\to U$ be the finite regular covering map corresponding to the
normal subgroup $\psi^{-1}(G)$ of $\pi_1(U)$. Then,
\begin{enumerate}
	\item\label{item:exist}
	There exists a smooth quasi-projective curve $D$ of general type and an admissible map $f:\widetilde U \to D$ with no multiple
	fibers such that
	$$
	f_*:\pi_1(\widetilde U)\to \pi_1(D)
	$$
	coincides with $\psi_|\circ p_*:\pi_1(\widetilde U)\to G$ up to isomorphism in the target. Moreover, $p$ induces an isomorphism between $\ker f_*$ and $K$.
	\item\label{item:deckcharacteristic} For every deck transformation $d:\widetilde U\to\widetilde U$ of the
	covering map $p$, the maps
	$$
	(f\circ d)^*:\Sigma_1(D)\to\Sigma_1(\widetilde U)
	$$
	and
	$$
	f^*:\Sigma_1(D)\to\Sigma_1(\widetilde U)
	$$
	have the same image.
	\item\label{item:algebraic} For every deck transformation $d:\widetilde U\to\widetilde U$ of the covering map
	$p$, there exists a unique algebraic isomorphism $d':D\to D$ such that
	$$
	f\circ d=d'\circ f.
	$$
	Hence, the deck transformation group of $p$ induces a compatible action on $D$ by algebraic isomorphisms.
	\end{enumerate}
\end{lemma}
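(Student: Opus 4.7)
The plan is to address the three items in order. For part~\eqref{item:exist}, I first invoke Theorem~\ref{thm:finitecover} so that $\widetilde U$ lives as a Zariski open set inside a compact K\"ahler manifold. The subgroup $\psi^{-1}(G)\trianglelefteq\pi_1(U)$ is normal as the preimage of the normal subgroup $G$ of $\GG_{g,(r,\bar m)}$, so $p$ is regular with deck group $\pi_1(U)/\psi^{-1}(G)\cong \GG_{g,(r,\bar m)}/G$, and $p_*$ identifies $\pi_1(\widetilde U)$ with $\psi^{-1}(G)$. Since $K\subseteq\psi^{-1}(G)$, the composition $\psi\circ p_*:\pi_1(\widetilde U)\twoheadrightarrow G$ has kernel $p_*^{-1}(K)\cong K$, which is finitely generated by hypothesis. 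The assumption $\chi_{g,(r,\bar m)}<0$ combined with Lemma~\ref{lem:Fenchel} shows that $G$ is either $\GG_{g'}$ with $g'\geq 2$ (when $r=0$) or $\FF_s$ with $s\geq 2$ (when $r\geq 1$). Theorem~\ref{thm:mainNoMultiple} then produces the required smooth quasi-projective curve $D$ of general type and an admissible map $f:\widetilde U\to D$ without multiple fibers, with $f_*$ agreeing with $\psi\circ p_*$ up to isomorphism in the target. In particular $\ker f_*=\ker(\psi\circ p_*)$, and the injectivity of $p_*$ upgrades this to an isomorphism $\ker f_*\cong K$.

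For part~\eqref{item:deckcharacteristic}, the crucial observation is that $d_*(\ker f_*)=\ker f_*$ for every deck transformation $d$. Under the identification $p_*:\pi_1(\widetilde U)\hookrightarrow \pi_1(U)$ with image $\psi^{-1}(G)$, the induced map $d_*$ on $\pi_1(\widetilde U)$ corresponds to conjugation by some lift $\gamma\in\pi_1(U)$ of the class of $d$ in the deck group. Since $K\trianglelefteq \pi_1(U)$, this conjugation preserves $K$, and hence $d_*(\ker f_*)=\ker f_*$. Consequently $f_*$ and $(f\circ d)_*=f_*\circ d_*$ have the same kernel, so $f^*$ and $(f\circ d)^*$ have the same image as maps $H^1(D,\CC^*)\to H^1(\widetilde U,\CC^*)$, namely the subgroup of characters of $\pi_1(\widetilde U)$ vanishing on $\ker f_*$. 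For a curve $D$ of general type, $\pi_1(D)$ is either $\GG_{g'}$ with $g'\geq 2$ or a non-abelian free group, and in either case a direct Euler-characteristic computation yields $\dim H^1(D,\CC_\xi)\geq 1$ for every character $\xi$, so $\Sigma_1(D)=H^1(D,\CC^*)$ and the equality of images transfers to $\Sigma_1(D)$.

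For part~\eqref{item:algebraic}, both $f$ and $f\circ d$ are admissible maps from $\widetilde U$ onto the curve of general type $D$, and by part~\eqref{item:deckcharacteristic} the components of $\Sigma_1(\widetilde U)$ they determine coincide. The uniqueness clause in Proposition~\ref{prop:Arapura} then yields a unique algebraic isomorphism $d':D\to D$ with $f\circ d=d'\circ f$; that same uniqueness forces $d\mapsto d'$ to be a group homomorphism, giving the compatible action of the deck group on $D$ by algebraic isomorphisms.

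The main obstacle is part~\eqref{item:deckcharacteristic}: everything hinges on choosing $p$ so that it is regular (via $\psi^{-1}(G)\trianglelefteq \pi_1(U)$) while having the conjugation action of the deck group preserve $\ker f_*$, which in turn requires $K\trianglelefteq \pi_1(U)$ rather than merely $K\trianglelefteq \psi^{-1}(G)$. The identification $\Sigma_1(D)=H^1(D,\CC^*)$ for curves of general type is routine but must be checked. The remaining parts then cascade, with Proposition~\ref{prop:Arapura} doing the heavy lifting for part~\eqref{item:algebraic}.
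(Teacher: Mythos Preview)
Your proof is correct. Parts~\eqref{item:exist} and~\eqref{item:algebraic} follow the paper essentially verbatim, but your treatment of part~\eqref{item:deckcharacteristic} is genuinely different and cleaner than the paper's.

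The paper proves part~\eqref{item:deckcharacteristic} by introducing an auxiliary branched cover $p':D'\to C_{g,r}$ realizing $G\trianglelefteq\GG_{g,(r,\bar m)}$ geometrically, matching each deck transformation $d$ of $p$ with a deck transformation $d_2$ of $p'$ via $\phi\circ\psi$, and then chasing explicit commutative diagrams (first on $\pi_1$ with change-of-basepoint maps, then on $H_1$) to obtain $d^*\circ f^*\circ(\phi_|)^*=f^*\circ(\phi_|)^*\circ(d_2)^*$, from which the equality of images follows. Your argument bypasses this geometric scaffolding entirely: you observe directly that under $p_*$ the self-map of $\pi_1(\widetilde U)$ induced by $d$ is conjugation by an element of $\pi_1(U)$, which preserves $K$ by normality, hence $\ker f_*=\ker(f\circ d)_*$; since both maps are surjective onto $\pi_1(D)$, the images of $f^*$ and $(f\circ d)^*$ in $H^1(\widetilde U,\CC^*)$ are both the annihilator of $\ker f_*$. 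Combined with $\Sigma_1(D)=H^1(D,\CC^*)$ for curves of general type, this gives the claim. Your route is shorter and isolates the group-theoretic core of the statement; the paper's route has the advantage of making explicit the compatibility with an actual branched cover of the target orbifold (the map $d_2$), which foreshadows the construction in Lemma~\ref{lem:Steincover} and the descent argument in Theorem~\ref{thm:GOP}, but that compatibility is not logically required for the lemma as stated.
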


\begin{proof}
	
	We start by fixing a base point $u\in U$, and consider $\psi$ as an epimorphism from $\pi_1(U,u)\to \GG_{g,(r,\bar m)}$. Let $\widetilde u\in p^{-1}(u)\subset\widetilde U$.
	
	By Theorem~\ref{thm:finitecover}, $\widetilde U$ is also a Zariski open set in a compact K\"ahler manifold. Also note that the kernel of $\psi_|\circ p_*$ is isomorphic to $K\cap \psi^{-1}(G)=K$, so it is finitely generated. 	Lemma~\ref{lem:Fenchel} implies that $G$ satisfies that $G\cong \GG_{g'}$ for some $g'\geq 2$ if $r=0$ or  $G\cong \FF_s$ for some $s\geq 2$ if $r\geq 1$. Part~\eqref{item:exist} follows from Theorem~\ref{thm:mainNoMultiple} applied to $\psi_|\circ p_*$.
	
	Let $f$ be as in part~\eqref{item:exist}, and let $\varphi:\pi_1(D,f(\widetilde u))\to \GG_{g,(r,\bar m)}$
	be the inclusion onto $G$ such that $\varphi\circ f_*=\psi\circ p_*$. Let $C_{g,r}$ be a smooth quasi-projective
	curve of genus $g$ with $r$ points removed, let $c\in C_{g,r}$ be any point, let $C_{g,(r,\bar m)}$ the orbifold
	structure on $C_{g,r}$ obtained from choosing $n$ points (all different from $c$) with multiplicities
	$m_1,\ldots,m_n$, and let $\phi:\GG_{g,(r,\bar m)}\to \pi_1^{\orb}\left(C_{g,(r,\bar m)},c\right)$ be an isomorphism.
	By Lemma~\ref{lem:Fenchel}, the Galois cover of $p':D'\to C_{g,r}$ corresponding to the subgroup $\phi(G)$ satisfies
	that the branch divisor is $n$ points with multiplicities $m_1,\ldots,m_n$. Let $x\in (p')^{-1}(c)$. Note that $\phi$
	induces an isomorphism $\phi_|:\pi_1(D,f(\widetilde u))\to \pi_1(D',x)$ such that $\phi\circ\varphi=(p')_*\circ \phi_|$.
	In other words, we have the following commutative diagram:
	$$
	\begin{tikzcd}
	\pi_1(\widetilde U,\widetilde u)\arrow[r,two heads,"f_*"]\arrow[d, hook, "p_*"] & \pi_1(D,f(\widetilde u))\arrow[r,"\phi_|","\cong"']\arrow[d,hook,"\varphi"] & \pi_1(D',x)\arrow[d,hook,"(p')_*"]\\
	\pi_1(U,u) \arrow[r, two heads,"\psi"]& \GG_{g,(r,\bar m)} \arrow[r,"\phi","\cong"'] & \pi_1^{\orb}\left(C_{g,(r,\bar m)},c\right)
	\end{tikzcd}
	$$
	
	Let us show part~\eqref{item:deckcharacteristic}. Let $d:\widetilde U\to\widetilde U$ be the deck
	transformation corresponding to a certain element $[\gamma]$ of
	$\pi_1(U,u)/p_*\left(\pi_1(\widetilde U,\widetilde u)\right)$, where $\gamma$ is a loop in $U$ starting at $u$. That is, $\widetilde\gamma_{\widetilde u}(1)=d(\widetilde u)$, where $\widetilde\gamma_{\widetilde u}$ is the lift of $\gamma$ starting at $\widetilde u$. Note that
	$$\pi_1(U,u)/p_*\left(\pi_1(\widetilde U,\widetilde u)\right)\overset{\psi}{\cong}\GG_{g,(r,\bar m)}/G\overset{\phi}{\cong}\pi_1^{\orb}\left(C_{g,(r,\bar m)},c\right)/(p')_*(\pi_1(D',x))$$
	Let $d_2:D'\to D'$ be the covering transformation corresponding to
	$d\in \pi_1(U,u)/p_*\left(\pi_1(\widetilde U,\widetilde u)\right)$ through $\phi\circ\psi$, and let $[\delta]:=\phi\circ\psi([\gamma])\in \pi_1^{\orb}\left(C_{g,(r,\bar m)},c\right)$.
	Let $M_+\subset C_{g,r}$ be the set of orbifold points of multiplicity $\geq 2$. Note that, since the natural
	projection $\pi_1(C_{g,r}\setminus M_+, c)\twoheadrightarrow \pi_1^{\orb}(C_{g,r}, c)$
	is surjective, we may assume that $\delta$ is a loop in $C_{g,r}\setminus M_+$ starting at $c$.
	
	Consider the following commutative diagram:
	\begin{equation}\label{eq:commbase}
	\begin{tikzcd}
		\pi_1(U, u)\arrow[r,"\mathrm{Id}", "\cong"']\arrow[d,two heads, "\phi\circ\psi"]&\pi_1(U,u)\arrow[r,"\gamma_{\#}", "\cong"']&\pi_1(U,u)\arrow[d,two heads, "\phi\circ\psi"]\\
		\pi_1^{\orb}(C_{g,r}, c)\arrow[r,"\mathrm{Id}", "\cong"']& \pi_1^{\orb}(C_{g,r}, c)\arrow[r,"\delta_{\#}", "\cong"']&\pi_1^{\orb}(C_{g,r}, c),
	\end{tikzcd}
	\end{equation}
	where $\gamma_{\#}$ (resp. $\delta_{\#}$) denotes the conjugation map $[\alpha]\mapsto [\gamma]\cdot[\alpha]\cdot[\gamma]^{-1}$ (resp. $[\beta]\mapsto [\delta]\cdot[\beta]\cdot[\delta]^{-1}$).
	
	Let $\mathcal Q:=(p')^{-1}(M_+)$. Let $\widetilde \delta_x$ be the lift of $\delta$ by $p'$ starting at $x$, which exists and is unique because $p'_|:D'\setminus\mathcal Q\to C_{g,r}\setminus M_+$ is an unramified cover. Interpreting the fundamental groups of the covering spaces and as subgroups of the fundamental group of the bases through the monomorphism induced on fundamental groups by the corresponding covering maps, the previous commutative diagram determines the following unique commutative diagram between the corresponding subgroups:
	\begin{equation}\label{eq:commtop}
		\begin{tikzcd}
		\pi_1(\widetilde{U},\widetilde u)\arrow[r,"d_*", "\cong"']\arrow[d, two heads, "\phi_|\circ f_*"]&\pi_1(\widetilde{U},d(\widetilde u))\arrow[r,"\left(\widetilde{\gamma}_{\widetilde u}\right)_{\#}", "\cong"']&\pi_1(\widetilde{U},\widetilde u)\arrow[d, two heads, "\phi_|\circ f_*"]\\
		\pi_1(D', x)\arrow[r,"(d_2)_*", "\cong"']& \pi_1(D', d_2(x))\arrow[r,"\left(\widetilde{\delta}_x\right)_{\#}", "\cong"']&\pi_1(D', x),
	\end{tikzcd}
	\end{equation}
	where $\left(\widetilde{\gamma}_{\widetilde u}\right)_{\#}$ (resp. $\left(\widetilde{\delta}_x\right)_{\#}$) denotes the  map $[\alpha']\mapsto \left[\widetilde{\gamma}_{\widetilde u}*\alpha'*\overline{\left(\widetilde{\gamma}_{\widetilde u}\right)}\right]$ (resp. $[\beta']\mapsto \left[\widetilde{\delta}_x*\beta'*\overline{\left(\widetilde{\delta}_x\right)}\right]$). Indeed, each of the rows of \eqref{eq:commbase} lift uniquely to the rows of \eqref{eq:commtop}, so \eqref{eq:commtop} must commute.
	Now, we consider the natural (abelianization) morphisms from the fundamental group to the first homology with integer coefficients. The previous commutative diagram induces the following commutative diagram in homology: 
	$$
	\begin{tikzcd}
		H_1(\widetilde{U},\ZZ)\arrow[r,"d_*", "\cong"']\arrow[d,two heads, "\mathrm{Ab}(\phi_|)\circ f_*"]&H_1(\widetilde{U},\ZZ)\arrow[r,"\mathrm{Id}", "\cong"']&H_1(\widetilde{U},\ZZ)\arrow[d,two heads, "\mathrm{Ab}(\phi_|)\circ f_*"]\\
		H_1(D', \ZZ)\arrow[r,"(d_2)_*", "\cong"']& H_1(D', \ZZ)\arrow[r,"\mathrm{Id}", "\cong"']&H_1(D', \ZZ)
	\end{tikzcd}
	$$
	This implies that
	$$
	d^*\circ f^* \circ (\phi_|)^*,f^* \circ (\phi_|)^*\circ (d_2)^*:\Hom_{\ZZ}(H_1(D',\ZZ),\CC^*)\to\Hom_{\ZZ}(H_1(\widetilde U,\ZZ),\CC^*)
	$$
	coincide, so both induce the same morphism at the level of characteristic varieties. In particular, since $(d_2)^*:\Sigma_1(D')\to\Sigma_1(D')$ and $(\phi_|)^*:\Sigma_1(D')\to\Sigma_1(D)$ are isomorphisms, $f^*$ and $d^*\circ f^* $ (seen as morphisms between first characteristic varieties) have the same image. This concludes the proof of part~\eqref{item:deckcharacteristic}.
	
	Note that $f\circ d$, like $f$, is an admissible map. Part~\eqref{item:algebraic} follows from part~\eqref{item:deckcharacteristic} and Proposition~\ref{prop:Arapura}.
\end{proof}

 The following result allows us to lift admissible maps to suitable branched covers.
 
 \begin{lemma}\label{lem:Steincover}
	Let $U$ be a Zariski open set inside of a compact K\"ahler manifold, and let $F:U\to C_{g,r}$ be an admissible map
	to a smooth quasi-projective curve $C_{g,r}$. Let $C_{g,(r,\bar m)}$ be the maximal orbifold structure on $C_{g,r}$
	with respect to $F$. Suppose that $\chi_{g,(r,\bar m)}<0$. Let $N\trianglelefteq \pi_1^{\orb}(C_{g,(r,\bar m)})$
	be a finite index normal subgroup with no non-trivial torsion elements, whose existence is guaranteed by
	Lemma~\ref{lem:Fenchel}. Let $p:\widetilde U\to U$ be the regular (unramified) cover associated to the
	subgroup $(F_*)^{-1}(N)$, and let $q:D\to C_{g,r}$ be the (Galois) branched cover corresponding to $N$ as in
	Lemma~\ref{lem:Fenchel}. Then, $F$ lifts to an admissible map $f:\widetilde U\to D$ with no multiple fibers
	such that the following diagram commutes:
	\begin{equation}\label{eq:cover}
	\begin{tikzcd}
	\pi_1(\widetilde U)\arrow[r, two heads, "f_*"]\arrow[d, hook , "p_*"]& \pi_1(D)\arrow[d,hook,"q_*"]\\
	\pi_1(U)\arrow[r, two heads, "F_*"] & \pi_1^{\orb}(C_{g,(r,\bar m)}).
	\end{tikzcd}
	\end{equation}
\end{lemma}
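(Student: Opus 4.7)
The plan is to construct $f$ in two stages: first lift $F\circ p$ to $D$ on the unramified locus using the standard covering space lifting criterion, and then extend the lift holomorphically across the multiple-fiber divisor, using the $m_j$-divisibility of $F^{*}(P_j)$ built into the orbifold structure.

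First I would set $M_+=\{P_1,\dots,P_n\}\subset C_{g,r}$, $C':=C_{g,r}\setminus M_+$, $D':=D\setminus q^{-1}(M_+)$, $U':=U\setminus F^{-1}(M_+)$, and $\widetilde U':=p^{-1}(U')$. By Lemma~\ref{lem:Fenchel}, since $N$ is torsion-free, $q|_{D'}\colon D'\to C'$ is an unramified Galois cover corresponding to the subgroup $N'\subset\pi_1(C')$ defined as the preimage of $N$ under the projection $\pi_1(C')\twoheadrightarrow\pi_1^{\orb}(C_{g,(r,\bar m)})$. Restricting $p$ gives an unramified cover $p|_{\widetilde U'}\colon\widetilde U'\to U'$. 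The image of $\pi_1(\widetilde U')$ under $(F|_{U'})_*$ lies in $N'$, since its image in $\pi_1^{\orb}(C_{g,(r,\bar m)})$ is contained in $F_*(p_*(\pi_1(\widetilde U)))=N$. The standard lifting criterion then produces a unique holomorphic lift $\tilde f\colon\widetilde U'\to D'$ with $q\circ\tilde f=F\circ p|_{\widetilde U'}$.

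Next I would extend $\tilde f$ holomorphically across $\widetilde U\setminus\widetilde U'=p^{-1}(F^{-1}(M_+))$. Fix $y$ in this locus with $F(p(y))=P_j$. Since $F^{*}(P_j)$ is an $m_j$-multiple divisor and $p$ is unramified, $(F\circ p)^{*}(P_j)$ has all component multiplicities divisible by $m_j$ near $y$. Using that $q$ has ramification index exactly $m_j$ at every point of $q^{-1}(P_j)$, in appropriate local coordinates on $D$ the map $q$ takes the form $z\mapsto z^{m_j}$, and a holomorphic $m_j$-th root of the local form of $F\circ p$---with branch pinned down by $\tilde f$ on the punctured neighborhood of $y$---gives the extension at $y$. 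Equivalently, $\tilde f$ is bounded near $y$ (its image stays in a relatively compact neighborhood of a preimage of $P_j$), so Riemann's extension theorem across the boundary divisor yields the holomorphic extension. Gluing these local extensions produces $f\colon\widetilde U\to D$ with $q\circ f=F\circ p$.

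It remains to verify the stated properties. Surjectivity of $f$ holds because $f(\widetilde U)$ is an analytic subset of the curve $D$ whose image under $q$ exhausts $C_{g,r}$, hence cannot be finite. Admissibility is obtained by applying Theorem~\ref{thm:holomorphicExt} to $f$ together with Stein factorization (Theorem~\ref{thm:Stein}, Remark~\ref{rem:SteinAdmissible}); connectedness of generic fibers of $f$ is inherited from $F$ via the Galois structure of both covers $p$ and $q$. The absence of multiple fibers---the heart of the statement---follows from the local computation $q\circ f=F\circ p$ with $q$ of ramification index $m_j$: each component of $f^{-1}(\tilde P_j)$ has multiplicity $k_{j,l}/m_j$, where $k_{j,l}$ is the corresponding multiplicity in $F^{-1}(P_j)$, and maximality of the orbifold structure forces $\gcd_l(k_{j,l})=m_j$, so the gcd drops to $1$ after dividing (and over points outside $M_+$ maximality already gives $\gcd=1$). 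Commutativity of the fundamental-group square is immediate from $q\circ f=F\circ p$, and surjectivity of $f_*$ follows from $q_*(f_*(\pi_1(\widetilde U)))=F_*(p_*(\pi_1(\widetilde U)))=N=\im(q_*)$. The main obstacle I anticipate is the local extension step, where the consistency of the $m_j$-th root branches across components of the multiple-fiber divisor meeting $y$ relies essentially on the uniqueness of $\tilde f$ on the connected punctured neighborhood, which in turn uses the maximality of the orbifold structure.
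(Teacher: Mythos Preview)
Your argument is correct in outline and reaches the same conclusion, but the route differs from the paper's in two substantive ways, and there is one step where your sketch hides a nontrivial point.

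\textbf{Construction of the lift.} The paper does not lift on the unramified locus and then extend. Instead it compactifies $F$ to $\overline F:X\to C_g$, forms the fiber product $Y=X\times_{C_g}\overline D$ with the compactified branched cover $\overline q:\overline D\to C_g$, and normalizes $Y$. Connectedness of the fibers of $f$ is then immediate from the fiber-product description (the fiber of $Y\to\overline D$ over $\tilde P$ is $\overline F^{-1}(q(\tilde P))$), and the identification of the resulting branched cover of $X$ with the given $p:\widetilde U\to U$ is obtained from the uniqueness clause in the Grauert--Remmert theorem. Your covering-space lift plus local $m_j$-th root (or Riemann extension) is more elementary and avoids Grauert--Remmert, but you then have to supply the connectedness of generic fibers separately via $\Deck(p)$--$\Deck(q)$ equivariance of $f$, which you only allude to.

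\textbf{No multiple fibers.} This is the more interesting divergence. The paper does not compute local multiplicities at all: it observes that $\ker f_*$ is finitely generated (it is $p_*^{-1}(\ker F_*)$, and $\ker F_*\subset\im p_*$), that $D$ is of general type so $\pi_1^{\orb}(D_{\bar m'})$ satisfies property \NINF\ for any $\bar m'$, and then applies Lemma~\ref{lem:NINF} to the factorization $\pi_1(\widetilde U)\twoheadrightarrow\pi_1^{\orb}(D_{\bar m'})\twoheadrightarrow\pi_1(D)$ to conclude $\pi_1^{\orb}(D_{\bar m'})\cong\pi_1(D)$; Theorem~\ref{thm:CCOnotFreeProduct} then forces $\bar m'=\emptyset$. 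Your direct multiplicity computation works too, but note a gap: asserting $\gcd_l(k_{j,l}/m_j)=1$ from $\gcd_l(k_{j,l})=m_j$ requires that \emph{every} component $E_l$ of $F^{-1}(P_j)$ actually contributes to $f^{-1}(\tilde P_j)$, for each fixed $\tilde P_j\in q^{-1}(P_j)$. This is true, but it is not automatic: it needs the equivariance of $f$ with respect to the identified deck groups of $p$ and $q$, together with transitivity of the deck action on both $q^{-1}(P_j)$ and the set of components of $p^{-1}(E_l)$. Once you make that equivariance explicit (it is the same ingredient you need for connectedness of generic fibers), your argument closes.
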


\begin{proof}
Let $M_+\subset C_{g,r}$ the set of $n$ points with multiplicities in the orbifold structure $C_{g,(r,\bar m)}$.
We use facts about branched covers which can be found in \cite[Section 1]{Uludag}.
Consider $\overline F:X\to C_g$ be a holomorphic enlargement of $F$, where $X$ is a compact K\"ahler manifold such
that $X\setminus U$ is a simple normal crossings divisor. Note that $q$ also extends to a branched cover of smooth
compact projective curves $\overline q:\overline D\to C_g$ such that $D=\overline q^{-1}(C_{g,r})$. Consider the
following pullback diagram:
$$
\begin{tikzcd}
Y\arrow[r, "\overline f"]\arrow[dr, phantom, "\lrcorner", very near start] \arrow[d, "\overline \alpha"] &
\overline D\arrow[d, "\overline q"]\\
X\arrow[r, "\overline F"]& C_g
\end{tikzcd}
$$
Note that $\overline\alpha$ is a proper complex analytic morphism with finite fibers between compact complex spaces,
and that $\overline f$ has connected fibers. Let $\eta:Y'\to Y$ be the normalization of $Y$. Then,
$\overline\alpha\circ\eta$ is a proper surjective complex analytic morphism with finite fibers between two normal
compact complex spaces, so it is a branched cover. Note that, since the normalization morphism is a biholomorphic
over the set of normal points \cite[Chapter 8, \S 4]{GR-sheaves} and
$q_|:D\setminus q^{-1}(M_+)\to C_{g,r}\setminus M_+$ is a regular cover, then
$$
\overline\alpha\circ\eta_|:(\overline\alpha\circ\eta)^{-1}(\overline F^{-1}(C_{g,r}\setminus M_+))\to
\overline F^{-1}(C_{g,r}\setminus M_+)
$$
is a regular cover, so, further restricting the base to $U\setminus F^{-1}(M_+)$, we get that
$$
\overline\alpha\circ\eta_|:(\overline\alpha\circ\eta)^{-1}(U\setminus F^{-1}(M_+))\to  U\setminus F^{-1}(M_+)
$$
is a regular cover. In fact, this is precisely the regular cover of $U\setminus F^{-1}(M_+)$ obtained by the pullback diagram
$$
\begin{tikzcd}
V\arrow[r]\arrow[dr, phantom, "\lrcorner", very near start] \arrow[d] &  D\arrow[d, "q"]\\
U\arrow[r, "F"]& C_{g,r}
\end{tikzcd}
$$
By construction, this regular cover is equivalent to $p_|:\widetilde U\setminus p^{-1}(F^{-1}(M_+))\to U\setminus F^{-1}(M_+)$. By the uniqueness in the Grauert-Remmert theorem~\cite{GrauertRemmert} (as stated in \cite[Theorem 1]{NambaGerms}), $p$ coincides with $\overline\alpha\circ\eta$ over $U$, so the lift $f$ of $F$ exists and has connected generic fibers. By Theorem~\ref{thm:finitecover} and Remark~\ref{rem:SteinAdmissible}, $f$ is an admissible map from a Zariski open set in a compact K\"ahler manifold to $D$, which, by Lemma~\ref{lem:Fenchel}, is a curve of general type.

Note that the maximal orbifold structure on $C_{g,r}$ with respect to $q$ is also $C_{g,(r,\bar m)}$, so the map $q_*$
in diagram \eqref{eq:cover} is well defined and injective by \cite[Lemma 1.3]{Uludag}. Also, diagram \eqref{eq:cover}
necessarily commutes by how the induced morphisms between (orbifold) fundamental groups are defined. Since the image of
$p_*$ contains $\ker F_*$ by construction, $\ker f_*$ is finitely generated. Note that, since $D$ is of general type,
the curve orbifold group $\pi_1^{\orb}(D_{\bar m'})$ has negative Euler characteristic for every orbifold structure
$\bar m'$ on $D$, and in particular it satisfies property \NINF\ by Lemma~\ref{lem:FgroupNINF}. In particular, if
$\bar m'$ is maximal with respect to $f$, then $f_*$ factors through $\pi_1^{\orb}(D_{\bar m'})$, and
Lemma~\ref{lem:NINF} implies that $\pi_1^{\orb}(D_{\bar m'})\cong\pi_1(D)$. Hence, by
Theorem~\ref{thm:CCOnotFreeProduct}, $f$ does not have multiple fibers.
\end{proof}

Now, we are ready to prove Theorem~\ref{thm:GOPintro}

\begin{thm}\label{thm:GOP}
	Let $U$ be a Zariski open set inside of a compact K\"ahler manifold. Suppose that there exists an epimorphism
	$\psi:\pi_1(U)\to \GG_{g,(r,\bar m)}$ with finitely generated kernel $K$, where $\chi_{g,(r,\bar m)}<0$. Then,
	there exists a smooth quasi-projective curve $C_{g',r'}$ and an admissible map $F:U\to C_{g',r'}$ such that the
	following hold:
	\begin{enumerate}
	\item \label{item:mult}
$F$ induces an orbifold morphism
$$
F:U\to C_{g',(r',\bar m)},
$$
where $C_{g',(r',\bar m)}$ is maximal with respect to $F$.
\item \label{item:psi}
$F_*:\pi_1(U)\to \pi_1^{\orb}(C_{g',(r',\bar m)})$ coincides with $\psi$ up to isomorphism in the target.
	\item\label{item:proj} $C_{g',r'}$ is projective if and only if $r=0$.
	\item\label{item:r0} If $r=0$, then $g=g'$. If $r\geq 1$, then $2g'+r'=2g+r$.
	\end{enumerate}
	
	Moreover, one such $F$ is unique up to isomorphism of algebraic varieties in the target.
\end{thm}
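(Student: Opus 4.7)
The plan is to descend an admissible map defined on a finite regular cover of $U$ to the desired $F$. First, apply Lemma~\ref{lem:cover}: choose a torsion-free finite-index normal subgroup $G\trianglelefteq\GG_{g,(r,\bar m)}$ (Lemma~\ref{lem:Fenchel}), let $p:\widetilde U\to U$ be the regular cover associated with $\psi^{-1}(G)$, obtain an admissible map $f:\widetilde U\to D$ to a smooth quasi-projective curve $D$ of general type with no multiple fibers, an isomorphism $\varphi:\pi_1(D)\xrightarrow{\sim} G$ such that $\varphi\circ f_*=\psi\circ p_*$, and an algebraic action of $\Deck(p)\cong\GG_{g,(r,\bar m)}/G$ on $D$ that makes $f$ equivariant. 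Set $C:=D/\Deck(p)$; since $\Deck(p)$ is a finite group acting by algebraic automorphisms on a smooth quasi-projective curve, $C$ is a smooth quasi-projective curve and the quotient map $\pi:D\to C$ is a finite Galois morphism. The $\Deck(p)$-equivariance of $f$ produces a surjective holomorphic descent $F:U\to C$ with $\pi\circ f=F\circ p$.

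Next, I would check that $F$ is admissible and compute its orbifold type. The map $F$ has connected generic fibers: if $c\in C$ is a regular value of $\pi$, then $F^{-1}(c)$ equals $p(f^{-1}(d))$ for any $d\in\pi^{-1}(c)$, which is the $p$-image of a single connected generic fiber of~$f$; combined with Theorem~\ref{thm:holomorphicExt} and Remark~\ref{rem:SteinAdmissible}, this yields admissibility of $F$. Let $C_{g',(r',\bar m')}$ be its maximal orbifold structure. To show $F_*$ coincides with $\psi$ up to isomorphism in the target, use the identity $F_*\circ p_*=\pi_*\circ f_*=\pi_*\circ\varphi^{-1}\circ\psi\circ p_*$ to deduce that $F_*$ and $(\pi_*\circ\varphi^{-1})\circ\psi$ agree on $p_*(\pi_1(\widetilde U))=\psi^{-1}(G)$. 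Since $K=\ker\psi\subseteq\psi^{-1}(G)$, this forces $K\subseteq\ker F_*$, so $F_*$ factors as $\rho\circ\psi$ for some epimorphism $\rho:\GG_{g,(r,\bar m)}\to\pi_1^{\orb}(C_{g',(r',\bar m')})$. The group $\ker F_*$ is finitely generated (Lemma~\ref{lem:exact}), the target is infinite (it contains $\pi_1(D)$), and $\GG_{g,(r,\bar m)}$ is infinite and satisfies property \NINF\ (Lemma~\ref{lem:FgroupNINF}); hence Lemma~\ref{lem:NINF} forces $\rho$ to be an isomorphism, proving~(\ref{item:psi}).

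From the isomorphism $\pi_1^{\orb}(C_{g',(r',\bar m')})\cong\GG_{g,(r,\bar m)}$ (hyperbolic by hypothesis), Theorem~\ref{thm:CCOnotFreeProduct} forces $\bar m'=\bar m$ up to reordering, so the multiple fibers of $F$ are governed by $\bar m$ as claimed. Item~(\ref{item:proj}) follows because $D$, hence $C$, is projective iff $r=0$ (Lemma~\ref{lem:Fenchel} and finiteness of $\pi$), and item~(\ref{item:r0}) is another direct application of Theorem~\ref{thm:CCOnotFreeProduct}. For uniqueness, given another admissible $F':U\to C'$ satisfying the conclusions, apply Lemma~\ref{lem:Steincover} to $F$ and $F'$ using $G$ (transported through the iso $\rho$ to $\pi_1^{\orb}$ of the target): since the corresponding pre-images in $\pi_1(U)$ both equal $\psi^{-1}(G)$, the two lifts $f:\widetilde U\to D$ and $f':\widetilde U\to D'$ are admissible maps without multiple fibers realizing the same epimorphism $\psi\circ p_*$, so Theorem~\ref{thm:mainNoMultiple} supplies a unique algebraic isomorphism $\phi:D\to D'$ intertwining them. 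An application of the uniqueness clause in Lemma~\ref{lem:cover}(\ref{item:algebraic}) to $\phi\circ d'$ and $d''\circ\phi$ for each $d\in\Deck(p)$ shows $\phi$ is $\Deck(p)$-equivariant, hence descends to an algebraic isomorphism $C\to C'$ intertwining $F$ and $F'$. The main obstacle will be the second paragraph: carefully verifying admissibility of $F$ (controlling generic connectedness and compatibility of compactifications) and, above all, pinning down the inclusion $K\subseteq\ker F_*$ needed to unlock the \NINF\ argument that identifies the target orbifold group.
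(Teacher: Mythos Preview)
Your proposal follows the paper's strategy closely: pass to a finite cover via Lemma~\ref{lem:cover}, use the induced $\Deck(p)$-action on $D$ to descend $f$ to a map $F:U\to C$, verify that $F$ realizes $\psi$, and deduce uniqueness via Lemma~\ref{lem:Steincover} together with the uniqueness clause of Theorem~\ref{thm:mainNoMultiple}. Your shortcuts (taking $C=D/\Deck(p)$ as already smooth and checking generic connectedness of fibers of $F$ directly) are legitimate for curves and simply streamline the paper's normalization-plus-Stein step.

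The one substantive difference is in establishing $\ker F_*=K$. You invoke Lemma~\ref{lem:NINF}, which requires the target $\pi_1^{\orb}(C_{g',(r',\bar m')})$ to be infinite; your justification ``it contains $\pi_1(D)$'' presupposes that $\pi_*$ is injective into this orbifold group, i.e.\ that the $F$-maximal orbifold structure on $C$ coincides with the $\pi$-maximal one. This is true (it follows from $f$ having no multiple fibers and $p$ being \'etale, via a short divisor computation showing the ramification index of $\pi$ over each point equals the multiplicity of the corresponding fiber of $F$), but you have not supplied it, and it is exactly the subtlety you flag at the end. The paper sidesteps this entirely: it uses Lemma~\ref{lem:exact} for both $f$ and $F$ to identify $K$ and $L:=\ker F_*$ with images of generic-fiber fundamental groups, observes that $p$ restricts to a finite cover $f^{-1}(P')\to F^{-1}(P)$ between such fibers, deduces $K\subseteq L$ with $[L:K]<\infty$, and then kills $L/K$ using that $\GG_{g,(r,\bar m)}$ has no nontrivial finite normal subgroups (Remark~\ref{rem:propertieshyperbolic}\eqref{part:finiteNormal} and Kurosh). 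This gives $L=K$ directly---and hence infiniteness of the target as a byproduct---without ever analyzing the orbifold structure of~$\pi$.
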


\begin{proof}
Let $p:\widetilde U\to U$, $D$ and $f:\widetilde U\to D$ as in Lemma~\ref{lem:cover}.
	
Since $D$ is a curve, it has a cover by affine open sets which are invariant under the action of $\mathrm{Deck}(p:\widetilde U\to U)$ from Lemma~\ref{lem:cover}, part~\eqref{item:algebraic} (the complement of the orbits of a finite number of points). Thus, the quotient $\theta:D\to C'$ map by this action exists as an algebraic morphism, where $C'$ is a 
curve (hence quasi-projective, but not necessarily smooth).
The map $f:\widetilde U\to D$ induces a complex analytic morphism $f':U\to C'$ between the quotients. By the universal property of
the normalization \cite[Chapter 8, \S 4]{GR-sheaves}, $f'$ lifts to $F':U\to S$, where $S$ is the
normalization of $C'$. In particular, $S$ is a smooth quasi-projective curve. Applying Stein factorization to a holomorphic enlargement of  $F':U\to S$ as in Remark~\ref{rem:SteinAdmissible}, we obtain that $F'=g \circ F$, where $C$ is a smooth quasi-projective curve,
$F:U\to C$ is admissible, and $g:C\to S$ is a finite morphism.

Since the normalization $S\to  C'$ is a birational equivalence, a generic fiber of $f'$
is also a generic fiber of $F'$, which is a disjoint union of generic fibers of $F$.
Moreover, since the generic fiber of $\theta$ is finite, the preimage through $p$
of a generic fiber of $f'$ is a disjoint union of generic fibers of $f$. Restricting to connected
components, one finds $P\in C\setminus B_{F}$ and $P'\in D\setminus{B_{f}}$ such that
$p:f^{-1}(P')\to F^{-1}(P)$ is a finite covering map, where $B_F$ and $B_{f}$ are the sets of atypical values of the corresponding maps.

Let us check that the morphism $F_*:\pi_1(U)\to \pi_1^{\orb}(C)$ coincides with $\psi$ up to
isomorphism in the target, where $C$ is endowed with the maximal orbifold structure with respect
to $F:U\to C$. Let $L$ be the kernel of $F_*:\pi_1(U)\to \pi_1^{\orb}(C)$. Consider the commutative diagram
$$
\begin{tikzcd}
	\pi_1(f^{-1}(P'))\arrow[r, "\iota_*"]\arrow[d, "p_*"] & \pi_1(\widetilde U)\arrow[d, "p_*"]&\\
	\pi_1(F^{-1}(P))\arrow[r, "i_*"]&\pi_1(U)\arrow[r, "F_*"]& \pi_1^{\orb}(C),
\end{tikzcd}
$$
where the horizontal arrows are induced by the inclusions. The vertical arrow on the left is
a finite covering space, so the image of $p_*\circ\iota_*$ (which is equal to $K$ by Lemma~\ref{lem:exact} and Lemma~\ref{lem:cover}\eqref{item:exist}) is a
finite index subgroup of the image of $i_*$ (which is equal to $L$ by Lemma~\ref{lem:exact}). Thus,
$$
\pi_1^{\orb}(C)\cong \pi_1(U)/L\cong (\pi_1(U)/K)/(L/K)
$$
is the quotient of an infinite group $\pi_1(U)/K\cong \GG_{g,(r,\bar m)}$ by a finite normal subgroup $L/K$ (so in particular, $\pi_1^{\orb}(C)$ is an infinite group). However, by Remark~\ref{rem:propertieshyperbolic} part~\eqref{part:finiteNormal} if $r=0$ and the Kurosh subgroup theorem if $r\geq 1$, $\GG_{g,(r,\bar m)}$ does not have any non-trivial finite normal subgroups. Thus, $L=K$, and, by Lemma~\ref{lem:NINF}, $F_*$ coincides with $\psi$ up to isomorphism in the target. Together with Theorem~\ref{thm:CCOnotFreeProduct}, this proves parts \eqref{item:mult} and \eqref{item:psi}. Parts \eqref{item:proj} and \eqref{item:r0} also follow from Theorem~\ref{thm:CCOnotFreeProduct}.

Let us prove the ``moreover'' part of the statement, regarding the uniqueness of $F$ up to algebraic isomorphism in
the target. Let $F_i:U\to C_i$ be admissible maps satisfying parts~\eqref{item:mult} to~\eqref{item:r0} for $i=1,2$. $(F_i)_*$ coincides with $\psi$ up to isomorphism in the target, so let $N_i\trianglelefteq \pi_1^{\orb}\left((C_i)_{\bar m}\right)$ be the normal subgroups corresponding to those identifications to $G\trianglelefteq \GG_{g,(r,\bar m)}$ as in Lemma~\ref{lem:cover}, which was the one used to construct $p:\widetilde U\to U$. Let $q_i:D_i\to C$ be the branched cover associated to $N_i$ as in Lemma~\ref{lem:Fenchel}, so $D_i$ is of general type. By Lemma~\ref{lem:Steincover}, $F_i$ lifts to an admissible map with no multiple fibers $f_i:\widetilde U\to D_i$. The commutativity of diagram~\eqref{eq:cover} in  Lemma~\ref{lem:Steincover} and the uniqueness in Theorem~\ref{thm:mainNoMultiple} imply that there exists an algebraic isomorphism $h:D_1\to D_2$ such that
\begin{equation}\label{eq:factor}
f_2=h\circ f_1
\end{equation}Moreover, by Lemma~\ref{lem:cover}\eqref{item:algebraic} and the fact that $(F_i)_*$ identifies the Galois groups of $p$ and $q_i:D_i\to C$ by the commutativity of diagram~\eqref{eq:cover} in Lemma~\ref{lem:Steincover}, equation~\eqref{eq:factor} descends to the quotients by the Galois groups as
$$
F_2=H\circ F_1,
$$
where $H:C_1\to C_2$ is an isomorphism.
\end{proof}

\section{The Orbifold Geometric Realizability Problem for groups with $\chi_{g,(r,\bar m)}\geq 0$}\label{s:sharp}
The goal of this section is to show that the Orbifold Geometric Realizability Problem~\ref{pbm:orbifoldgeometricmorphism} does not have a
positive answer for curve orbifold groups $\GG_{g,(r,\bar m)}$ with $\chi_{g,(r,\bar m)}\geq 0$ and
thus Theorem~\ref{thm:GOP} is sharp. We will do this by constructing examples for each one of those groups. 
By Remarks~\ref{rem:freeProduct} and~\ref{rem:infinite}, those groups are:
\begin{itemize}
 \item The Euclidean (resp. spherical) \cco\ groups (with $\chi_{g,\bar m}=0$, resp. $\chi_{g,\bar m}>0$)
 from List~\ref{list}.
 \item $\ZZ$ and $\ZZ_2*\ZZ_2$,
\end{itemize}
Note that finite cyclic groups are both open and (spherical) compact curve orbifold groups.

The following example shows that Theorem~\ref{thm:GOP} cannot be extended to any of the
Euclidean \cco\ groups. In it, we find quasi-projective varieties $U$ such that $\pi_1(U)$ is a
Euclidean \cco\ group however, $U$ does not admit an admissible map $F:U\to S$ onto a smooth projective
curve such that $\pi_1(U)\cong\pi^{\orb}_1(S_{\bar m})$ for some orbifold structure given by the multiple
fibers of~$F$. 

\begin{exam}[Euclidean  \cco\ groups]
For the case $\GG_1\cong\ZZ^2$, Hodge theory provides an obstruction: Let $U=(\CC^*)^2$, which is an
affine variety such that $\pi_1(U)\cong\ZZ^2$. Note that $H^1((\CC^*)^2,\QQ)$ is pure of type $(1,1)$, so in
particular, it has weight~$2$, whereas the first
cohomology of a smooth projective curve of genus $1$ has weight $1$. Hence, there does not exist any surjective algebraic morphism $F:U\to S$ inducing an isomorphism at the level of fundamental groups, where $S$ is a smooth projective curve of genus $1$.

Let us deal with the remaining four cases. Consider the following free and properly discontinuous action
$\sigma_k$ of $\ZZ_k$ by algebraic automorphisms on $(\CC^*)^2$. The points $\cP_k\subset (\CC^*)^2$
are those with non-trivial isotropy group.
\begin{enumerate}
\item $\sigma_2(s,t)=\left(\frac{1}{s},\frac{1}{t}\right)$ and $\cP_2:=\{(\pm 1,\pm 1),(\pm 1,\mp 1)\}$,
\item $\sigma_3(s,t)=\left(\frac{1}{t},\frac{s}{t}\right)$ and $\cP_3:=\{(1,1),(w,\bar w), (\bar w,w)\}$,
\item $\sigma_4(s,t)=\left(\frac{1}{t},s\right)$ and $\cP_4:=\cP_2$,
\item $\sigma_6(s,t)=\left(st,\frac{1}{s}\right)$ and $\cP_6:=\cP_2\cup\{(w,w), (\bar w,\bar w)\}$,
\end{enumerate}
where $w^2+w+1=0$. Consider $\bar\pi_k:(\CC^*)^2\to \bar U_k$ the corresponding quotient.
Since $(\CC^*)^2$ is affine, the restriction $\pi_k:V_k\to U_k$, $V_k=(\CC^*)^2\setminus \cP_k$ produces an
unbranched cover over a smooth quasi-projective variety.
\begin{enumerate}
\item $\bar U_2=\{(x,y,z)\in\CC^3\mid x^2+y^2+z^2-xyz=4\}$,
$\bar\pi_2(s,t)=(\frac{s^2+1}{s},\frac{t^2+1}{t},\frac{s^2t^2+1}{st})$, and
$U_2=\bar U_2\setminus \bar\pi_2(\cP_2)$.
\item $\bar U_3=\{(x,y,z)\in\CC^3\mid x^3+y^3+z^2-xyz-6xy+3z+9=0\}$,
$\bar\pi_3(s,t)=(\frac{s^2t+s+t^2}{st},\frac{st^2+t+s^2}{st},\frac{s^3t^3+s^3+t^3}{s^2t^2})$, and
$U_3=\bar U_3\setminus \bar\pi_3(\cP_3)$.
\item $\bar U_4=\{(x,y,z)\in\CC^3\mid x^4-7x^2y+y^3-xyz-3x^2+8y^2+2xz+z^2+16y=0\}$,
$\bar\pi_4(s,t)=\left(\frac{(st+1)(s+t)}{st},\frac{(s^2+1)(t^2+1)}{st},\frac{(st^3+1)(s^3+t)}{s^2t^2}\right)$,
and $U_4=\bar U_4\setminus \bar\pi_4(\cP_4)$.
\item
$\bar U_6=\{(x,y,z)\in\CC^3\mid f_6(x,y,z)=0\}$, where
$$
{\scriptstyle{
f_6 = x^{5} + x^{4} - 8 x^{3} y - 23 x^{3} - 9 x^{2} y + 14 x y^{2} + y^{3}
+ 2 x^{2} z - x y z - 20 x^{2} + 82 x y + 31 y^{2} - 2 x z - 4 y z + z^{2} + 120 x + 132 y - 12 z + 144,
}}
$$
$$
\bar\pi_6(s,t)=
{\scriptscriptstyle{
\left(
\frac{s^{2} t^{2} + s^{2} t + s t^{2} + s + t + 1}{st},
\frac{s^{4} t^{3} + s^{3} t^{4} + s^{3} t + s t^{3} + s + t}{s^2t^2},
\frac{s^{4} t^{4} + s^{4} t^{2} + s^{2} t^{4} + s^{2} + t^{2} + 1}{s^2t^2}\right),
}}
$$
and $U_6=\bar U_6\setminus \bar\pi_6(\cP_6)$.
\end{enumerate}
Note that $\pi_1(V_k)\cong\ZZ^2$ and $\Deck(\pi_k)=\ZZ_k$.
On the other hand, one has the following exact sequence:
\begin{equation}\label{eq:ses}
1\to\ZZ^2\to G_k\ \rightmap{\varphi_k}\ \ZZ_k\to 1,
\end{equation}
where $G_2=\GG_{0,(2,2,2,2)}$, $G_3=\GG_{0,(3,3,3)}$, $G_4=\GG_{0,(2,4,4)}$,
$G_6=\GG_{0,(2,3,6)}$, and $\varphi_k(x_i)=\frac{k}{m_i}$.
For each of these, it is straightforward to describe $\ZZ^2$ as a normal subgroup of $G_k$ as:
\begin{enumerate}
\item $\ZZ (x_2x_1)\times \ZZ (x_3x_1)\trianglelefteq G_2$,
\item $\ZZ (x_2x_1^2)\times \ZZ (x_1x_2x_1)\trianglelefteq G_3$.
\item $\ZZ (x_1x_2^2)\times \ZZ (x_2x_1x_2)\trianglelefteq G_4$.
\item $\ZZ( [x_1,x_2])\times \ZZ (x_2[x_1,x_2]x_2^{-1})\trianglelefteq G_6$.
\end{enumerate}
In fact, the short exact sequence~\eqref{eq:ses} splits because $x_3$ has order $k$ in $G_k$
by Remark~\ref{rem:propertiesEuclidean} part~\eqref{part:orderE}. Hence, the abelian extension $G_k$
is determined by the corresponding
action $h_k$ of $\ZZ_k$ on $\ZZ^2=\ZZ a\times \ZZ b$. Note that
$h_2(a,b)=(-a,-b)$, $h_3(a,b)=(-b,a-b)$, $h_4(a,b)=(-b,a)$, and $h_6(a,b)=(a+b,-a)$.
This coincides with the action by deck transformations of $\ZZ_k$ on $\pi_1(V_k)$.
Moreover, the ramified covering $\tilde \pi_k:(\CC^*)^2\to \bar U_k$ totally ramifies at
$(1,1)\in(\CC^*)^2$, since $\bar \pi_k^{-1}((1,1))=\{(1,1)\}$.
This implies that the short exact sequence
\begin{equation}\label{eq:sespi1}
1\to\pi_1(V_k)\xrightarrow{(\pi_k)_*}\pi_1(U_k)\to \ZZ_k\to 1
\end{equation}
splits, and hence~$\pi_1(U_k)\cong G_k$.

Let us see that there is no algebraic morphism $F:U_k\to S$ such that
$F_*:\pi_1(U_k)\to\pi_1^{\orb}(S_{\bar m})$
is an isomorphism, where $S$ and $\bar m$ are as in Theorem~\ref{thm:GOP}.
Since $H_1(U_k;\QQ)=0$, one would have that $S=\PP^1$. Moreover, since $G_k$ is not hyperbolic,
the orbifold structure on $\PP^1$ would have to be $(2,2,2,2)$, $(3,3,3)$, $(2,4,4)$, and $(2,3,6)$,
for $k=2,3,4$, and $6$ respectively, since those are the only non-hyperbolic \cco\ infinite groups
with the prescribed abelianizations. By the proof of Lemma~\ref{lem:Fenchel}, there would exist a
finite Galois cover $E_k\to\PP^1_{\bar m}$ associated with the kernel of $\pi_1(U_k)\to\ZZ_k$ such
that the branching divisor is given by the tuple $\bar m$ and $E_k$ is an elliptic curve. Since the
action is equivariant, there would exist an induced rational map $\PP^2\dashrightarrow E_k$, which
yields a contradiction since $E_k$ is not simply connected.
\end{exam}

The open curve orbifold groups with $\chi_{g,(r,\bar m)}=0$ are $\ZZ$ and $\ZZ_2*\ZZ_2$. The following two examples shows that Theorem~\ref{thm:GOP} cannot be extended to these groups either.

\begin{exam}[$\ZZ$]
Let $E$ be an elliptic curve, so $\pi_1(E)\cong\ZZ^2$, and let $\psi:\pi_1(E)\to\ZZ$ be an epimorphism.
Hence, $\ker \psi\cong \ZZ$. However, since $E$ is compact and $\ZZ$ is not a \cco\ group, $\psi$ cannot
be realized by an algebraic morphism from $E$ to a curve.
\end{exam}

\begin{exam}[$\ZZ_2*\ZZ_2$]
By \cite[Theorem II.2.3]{FriedmanMorgan}, there exists a smooth projective variety $X$ such that
$\pi_1(X)\cong\GG_{0,(2,2,2,2)}$. Consider the group $\ZZ_2*\ZZ_2$ and denote by $\alpha$ a generator
of the first free factor, and $\beta$ a generator of the second free factor. Consider the presentation
of $\GG_{0,(2,2,2,2)}$ of equation~\eqref{eq:Ggm}. The rule $x_1,x_2\mapsto \alpha$, $x_3,x_4\mapsto \beta$
defines an epimorphism $\psi:\GG_{0,(2,2,2,2)}\to \ZZ_2*\ZZ_2$ which cannot be realized by an algebraic
map $F$ from $X$ to a curve with an extra orbifold structure, since $X$ is compact and $\ZZ_2*\ZZ_2$
is not a \cco\ group. However, the kernel of $\psi$ is the cyclic group generated by $x_1x_2$, which is
finitely generated.
\end{exam}

We have shown that the Orbifold Geometric Realizability Problem~\ref{pbm:orbifoldgeometricmorphism} does not have a positive answer for curve orbifold groups $\GG_{g,(r,\bar m)}$ with $\chi_{g,(r,\bar m)}=0$, which, by Remark~\ref{rem:infinite}, are all infinite groups. Now, let us show it for the groups with $\chi_{g,(r,\bar m)}>0$. These are the spherical \cco\ groups appearing in List~\ref{list}, which are all finite.

\begin{exam}[Finite cyclic groups]
	Let $E$ be an elliptic curve, so $\pi_1(E)\cong\ZZ^2$. Consider any epimorphism $\psi:\pi_1(E)\to\ZZ_k$, for $k\geq 1$. Suppose that $\psi$ is realized geometrically. Finite cyclic groups can be both open and compact curve orbifold groups, but since $E$ is projective, then a geometric realization of $\psi$ would need to be given by an admissible map $F:E\to\PP^1$, which cannot exist since $E$ and $\PP^1$ are not birationally equivalent curves.
\end{exam}

\begin{exam}[Spherical \cco\ groups which are not cyclic]
	$G=\GG_{g,(r,\bar m)}$ is a spherical \cco\ group which is  not cyclic if and only if $g=r=0$ and $\bar m=(m_1,m_2,m_3)$ with $\sum_{j=1}^3\frac{1}{m_j}>1$. In that case, $G$ can be generated by two elements, and there exists an epimorphism $\psi:\mathbb F_2\to G$. Note that the kernel of $\psi$ is a finite index subgroup of $\FF_2$, so it is a finitely generated free group.
	
	Let $U$ be an elliptic curve minus a point, and note that $\pi_1(U)\cong \FF_2$. Suppose that $\psi$ was realized by an admissible
	map $F:U\to\PP^1$. Since algebraic maps are compactifiable, $F$ would extend to an admissible map from a genus $1$ curve to $\PP^1$,
	and, like in the previous example, this is impossible.
\end{exam}

Even if the Orbifold Geometric Realizability Problem~\ref{pbm:orbifoldgeometricmorphism} does not have a positive answer
for any curve orbifold groups $\GG_{g,(r,\bar m)}$ with $\chi_{g,(r,\bar m)}\geq 0$, it is worth noting that it is still
an interesting problem to find restrictions on either $\psi$ or $U$ that could give a positive answer. For example, by
Theorem~\ref{thm:ji-Eva}, the Orbifold Geometric Realizability Problem~\ref{pbm:orbifoldgeometricmorphism} has a solution
for $\ZZ$ and for $\ZZ_2*\ZZ_2$ if $\psi$ is assumed to be an isomorphism (in fact, the proof also works if the
abelianization of the kernel of $\psi$ is finite). In a different direction, the existence of epimorphisms to spherical
\cco\ groups can yield interesting geometric information: let $C\subset\PP^2$ be a sextic curve, and suppose that there
exists an epimorphism $\psi:\pi_1(\PP^2\setminus C)\to D_6$ to the dihedral group of order $6$.
In \cite[Theorem 1.1]{Degtyarev-OkaConjectureII} Degtyarev shows that this implies that $C$ is of torus-type, i.e.,
$C=V(f_2^3+f_3^2)$, where $f_2$ and $f_3$ are homogeneous polynomials of degrees $2$ and $3$ respectively.
Moreover, when $C\subset\PP^2$ has a special type of singularities (including nodes and cusps), then the existence
of certain roots of the Alexander polynomial of $U=\PP^2\setminus C$ determine a positive answer to the Orbifold
Geometric Realization Problem $F:U\to \PP^1_{\bar m}$ such that $\chi_{0,(0,\bar m)}=0$
(see~\cite[Thm. 4.7 and 5.16]{ji-Libgober-mw}).

We end the section by discussing an interesting example where $\chi_{g,(r,\bar m)}>0$ (so $\GG_{g,(r,\bar m)}$ is finite).
\begin{exam}\label{ex:quartic}
		The following is an example of the Orbifold Geometric Realization Problem~\ref{pbm:orbifoldgeometricmorphism}
		for surjections onto non-abelian \emph{spherical} \cco\
		groups with non-trivial finite kernels. This is given by the tricuspidal quartic $C_4\subset\PP^2$.
		Zariski (\cite{Zariski-rational}) used a beautiful argument using the Lefschetz hyperplane section
		Theorem backwards to calculate the fundamental group of the complement of the dual curve to a rational
		nodal curve of degree $d$, since they can be obtained as the plane section of the discriminant variety
		$\Delta_d\subset\PP^3$ of degree $d$ homogeneous polynomials in 2 variables, that is, the configuration
		space of $d$ points in $\PP^1$. Since $C_4$ can be obtained as the dual of a nodal cubic $C_3$, one
		obtains
		$$G=\pi_1(\PP^2\setminus C_4)\cong \pi_1(\PP^3\setminus \Delta_3)\cong \BB_3(\PP^1)
		\cong \langle x,y: xyx=yxy, xy^2x=1\rangle$$
		is the finite metacyclic group of order 12.
		Changing the set of generators $\alpha=xy$, $\beta=xyx$, one can check that
		$G\cong\langle \alpha,\beta: \alpha^2=\beta^3=(\alpha\beta)^2\rangle.$
		Note that $G/\langle \alpha^2\rangle$ is isomorphic to the spherical group $\GG_{0,(2,2,3)}$.
		The existence of the admissible map $F:U=\PP^2\setminus C_4\to \PP^1_{0,(2,2,3)}$ is guaranteed by
		the following functional equation $f_3^2+f_2^3=f_1^2 f_4$ called quasi-toric decomposition
		(see~\cite{ji-Libgober-mw}), where $f_i$ is a homogeneous polynomial in $\CC[x,y,z]$ of degree $i$,
		$C_4=\{f_4=0\}$, and $L=\{f_1=0\}$ is the bitangent line to $C_4$, whose existence can be deduced
		as the dual line of the node of the cubic $C_3$. The conic $V(f_2)$ passes through the three cusps
		and tangent to $L$ at one of the intersections $L\cup C_4$. The map $F=[f_3^2:f_2^3]$ is well
		defined on $U$, $F^*([0:1])$ (resp. $F^*([1:0])$) is a multiple fiber of multiplicity 2 (resp. 3).
		Moreover, $F^*([1:-1])$ is also multiple of multiplicity 2. Hence $F$ induces an orbifold morphism onto
		$\PP^1_{0,(2,2,3)}$ as expected. Maximality is immediate by \cite[Proposition 2.8]{ji-Libgober-mw},
		since the pencil is generated by two multiple curves.

		By construction, there are at least two such pencils depending on the choice of the tangency at
		$L\cup C_4$ resulting in at least two such quasi-toric decompositions. Each decomposition results
		in a different morphism. Note that uniqueness of the realization map $F$ is only guaranteed if
		$\chi_{g,(r,\bar m)}<0$, which is not satisfied in this example, since $\chi_{0,(0,(2,2,3))}=\frac{1}{3}>0$.
	\end{exam}
	
\section{Examples}\label{s:examples}

In this section we discuss some interesting incarnations of Theorem~\ref{thm:GOP} in which $U$ is the
complement of a curve in $\PP^2$.

\begin{exam}[Maximal cuspidal sextic]
	A well-known incarnation of Theorem~\ref{thm:GOP} for $r=1$ and a non-trivial finite kernel
	is given by the maximal cuspidal sextic $C_6\subset\PP^2$, which is dual to the rational nodal quartic.
	Note that $C_6$ has six cusps and four nodes. Using Zariski's argument as in Example~\ref{ex:quartic},
	$$
	\begin{array}{rcl}
		G&=&\pi_1(\PP^2\setminus C_6)\cong \pi_1(\PP^3\setminus \Delta_4)\cong \BB_4(\PP^1)\\
		&\cong& \langle a_1,a_2,a_3: a_ia_{i+1}a_i=a_{i+1}a_ia_{i+1}, i=1,2, a_1a_3=a_3a_1,
		a_1a_2a_3^2a_2a_1=1\rangle.
	\end{array}
	$$
	Changing the set of generators $x=a_1a_2a_1$, $y=a_1a_2$, $z=a_3$ and adding the relation $x^2=1$ one
	can check that
	$G/\langle x^2,zxy\rangle\cong \ZZ_2*\ZZ_3$. The kernel of this map is finite
	(see for instance~\cite[Prop.~2.7]{Margalit-mappingclass}). 
	The existence of the admissible map $F:U=\PP^2\setminus C_6\to \PP^1_{1,(2,3)}$ is guaranteed by the fact
	that $C_6$ must be of torus type, that is $f_6=f_3^2+f_2^2$, where $f_i$ is a homogeneous polynomial in
	$\CC[x,y,z]$ of degree $i$, and $C_6=\{f_6=0\}$. The map $F=[f_3^2:f_2^3]$ is well defined on $U$,
	$F^*([0:1])$ (resp. $F^*([1:0])$) is a multiple fiber of multiplicity 2 (resp. 3). Maximality is immediate
	by \cite[Proposition 2.8]{ji-Libgober-mw}, since the pencil is generated by two multiple curves.
\end{exam}

\begin{exam}[Irreducible quintic $\cC_5$ such that $\pi_1(\PP^2\setminus \cC_5)$ is infinite and non-abelian]\label{exam:quintic}
	As an application of Theorem~\ref{thm:GOP} to projective plane curve complements,
	consider $\cC_5$ an irreducible quintic in $\PP^2$ whose singularities are $\A_6\sqcup 3\A_2$.
	In~\cite{Artal-quintic}, it is shown that
	\begin{equation}
	\label{eq:G237}
	G=\pi_1(\PP^2\setminus \cC_5)=\langle \alpha,\beta,\gamma,v:
	v^5=\alpha^2=\beta^3=\gamma^7=\alpha\beta\gamma,
	[v,\alpha]=[v,\beta]=[v,\gamma]=1\rangle.
	\end{equation}
	Note that $v$ is a central element and $G/\langle v\rangle\cong \GG_{0,(2,3,7)}$.
	Since $\GG_{0,(2,3,7)}$ is a hyperbolic \cco\ group and the kernel of the projection of $G$ onto
	$G/\langle v\rangle$ is cyclic (and thus finitely generated), Theorem~\ref{thm:GOP} implies the
	existence of a pencil inducing an orbifold morphism $\PP^2\setminus \cC_5\to \PP^1_{0,(2,3,7)}$.
	In order to construct this pencil, consider a tricuspidal quartic $\cC_4=\{f_4=0\}$ together with
	its bitangent line $L_1$. Denote by $Q_1$ and $Q_2$ the bitangencies. One can blow up $Q_1$ and
	two of its infinitely near points on the quartic. After that, the self intersection of
	$\tilde L_1$ (the strict transform of $L_1$) is $-1$. One can hence blow down $\tilde L_1$, and
	also $E_2$, and $E_1$ (the second and first exceptional divisors of the blow-ups). Let $\sigma$
	denote the special Cremona transformation obtained by the composition of the three blow-ups
	followed by three blow-downs described above. Note that $\sigma^*(f_4)=g_5g^3_1$, where $g_d$
	is a homogeneous polynomial of degree $d$, $g_1$ is an equation for the strict transform of $E_3$
	and $\cC_5=\{g_5=0\}$ is a quintic with the prescribed singularities. Also, $\sigma^*(l_1)=g^2_1$,
	where $l_1$ is an equation for $L_1$. On the other hand, it is well known that $f_4$ satisfies
	a polynomial identity $f_4l_1^2=f_3^2+f_2^3$, where $\cC_2=\{f_2=0\}$ (resp. $\cC_3=\{f_3=0\}$)
	is a conic (resp. a nodal cubic) passing through all three cusps. One can check that
	$\sigma^*(f_2)=g_4$ (resp. $\sigma^*(f_3)=g_6$). After transforming the polynomial identity
	by $\sigma^*$ one obtains $g_5g_1^7=g_6^2+g_4^3$. Therefore, the map $[x:y:z]\mapsto [g_6^2:g_4^3]$
	is well defined on $\PP^2\setminus \cC_5$ and it induces an orbifold morphism over $\PP^1$ with
	multiplicities $2$ (resp. $3$, resp. $7$) at $[0:1]$ (resp. $[1:0]$, resp. $[1:-1]$).
\end{exam}

\begin{exam}[Rational cuspidal curves \cite{Artal-rational}]\label{exam:Artal}
	Let $d,a,b\in\ZZ$ such that $d>3$, $a\geq b>0$ and $a+b=d-2$. Let $C_{d,a,b}\subset\PP^2$ be a curve of degree $d$ having three singular points $P$, $A$ and $B$ such that the germs $(C_{d,a,b},P)$, $(C_{d,a,b},A)$ and $(C_{d,a,b},B)$ have exactly one Puiseux pair $(d-1,d-2)$, $(2a+1,2)$ and $(2b+1,2)$. Such a curve exists and is unique up to projective equivalence by \cite{FZ}.
	
	Let $n\geq 0$ be such that $2n+1=\gcd(2a+1,2b+1)$. By the main theorem in \cite{Artal-rational} and the presentation given in the proof of \cite[Corollary 1]{Artal-rational},
	$$
	\pi_1(\PP^2\setminus C_{d,a,b})\cong\langle u,v\mid u^2=v^{2n+1}, (v^{-n}u)^{d-2}=v^{d-1}\rangle.
	$$
	Moreover, by \cite[Corollary 1]{Artal-rational}, if $n>0$, $\pi_1(\PP^2\setminus C_{d,a,b})$ is a central extension of the \cco\ group $\GG_{0,(2,2n+1,d-2)}$ (which is a triangle group) as follows:
	\begin{equation}\label{eq:central}
		\begin{array}{ccccrclcc}
			1 &\to & K &\to &\pi_1(\PP^2\setminus C_{d,a,b})&\to &\GG_{0,(2,2n+1,d-2)}&\to& 1\\
			&&&& u&\mapsto  & x_1\\
			&&&& v&\mapsto  & x_2^2\\
		\end{array}
	\end{equation}
 where $K$ is the cyclic group generated by $u^2$, and $\GG_{0,(2,2n+1,d-2)}$ has the presentation given in equation~\ref{eq:Ggm}. 
 
 Note that, if $n>0$ and $(d,a,b)\neq (4,1,1), (7,4,1)$, then $\chi_{0,(2,2n+1,d-2)}<0$. In that case, the
 epimorphism from \eqref{eq:central} provides infinitely many examples satisfying the hypotheses of
 Theorem~\ref{thm:GOP}. Note that the geometric orbifold morphisms onto $\PP^1$ obtained by applying Theorem~\ref{thm:GOP} to these examples were already
 described in~\cite[Prop. 2.13]{ACM-multiple-fibers}.
\end{exam}

\begin{exam}\label{ex:Oka2}
	An example of Theorem~\ref{thm:GOP} for $r>0$ and a non-trivial kernel can be found in~\cite[Cor.~1.4]{Oka-genericRjoin}. Here, Oka computes the fundamental group $G$ of the complement of
	a projective curve $C=\cup_{k=0}^{r} C_{[1:\omega^k]}$ which is a union of
	$r$ curves of type $C_{[1:\omega^k]}=\{\omega^k f- g=0\}\subset\PP^2$, where
	\begin{itemize}
		\item $\omega$ is a primitive $r$-th root of unity,
		\item $d=\deg(f)=\deg(g)$,
		\item $f=\prod_{j=1}^{l_1} (X-\beta_jZ)^{m_{1j}}$ and $g=\prod_{i=1}^{l_2} (Y-\alpha_iZ)^{m_{2i}}$, where $\alpha_1,\ldots,\alpha_{l_2},\beta_1,\ldots,\beta_{l_1}$ are mutually distinct complex numbers,
		\item $m_1=\gcd(m_{1j})$ and $m_2=\gcd(m_{2i})$ are coprime, and
		\item the singularities of $C$ are contained in $V(f)\cap V(g)$.
	\end{itemize}
	Then, Oka proves that $G$ is a finite central extension of an open curve orbifold group as follows
	$$
	1\to \ZZ_n \to G \to \FF_{r-1} * \ZZ_{m_1} * \ZZ_{m_2}\to 1,
	$$
	where $n=\frac{d}{m_1m_2}$.
	
	The existence of the
	admissible map $$F:\PP^2 \setminus C \to \PP^1\setminus \cup_k\{[1:\omega^k]\}$$
	realizing the epimorphism $ G \to \FF_{r-1} * \ZZ_{m_1} * \ZZ_{m_2}$ is guaranteed by the construction of $C$. That is, the pencil map
	$F=[f:g]$ is admissible, and it induces an orbifold morphism $F:\PP^2 \setminus C \to \PP^1_{r,(m_1,m_2)}$ because the preimage
	of $[0:1]$ (resp. $[1:0]$) is a fiber of multiplicity $m_1$ (resp. $m_2$). If the orbifold Euler characteristic of $\FF_{r-1} * \ZZ_{m_1} * \ZZ_{m_2}$ is negative, maximality follows by Lemmas~\ref{lem:NINF} and~\ref{lem:FgroupNINF}, and in particular the multiple fibers of $F$ are contained in $\{[0:1],[1:0]\}$.
\end{exam}

\section{Serre's question in $\PP^2$ for curve orbifold groups}\label{s:planecurves}

In this section we address Problem~\ref{prob:serre} for curve orbifold groups, that is, the question of what curve orbifold groups that can be realized as
fundamental groups of curve complements in~$\PP^2$.

\begin{thm}\label{thm:realizable-spherical-euclidean}
	The only spherical and Euclidean \cco\ groups that can be realized as the complement of a curve in $\PP^2$
	are $\ZZ^2$ and the finite cyclic groups.
\end{thm}

\begin{proof}
	Note that $\ZZ^2\cong\pi_1((\CC^*)^2)$, and $(\CC^*)^2$ is the complement in $\PP^2$ of three non-concurrent lines.
	Also, the complement of a smooth curve of degree $d$ in $\PP^2$ is cyclic of order $d$. Hence, all that is left to
	prove is that the other groups in List~\ref{list} cannot be realized as the complement of a curve in $\PP^2$.
	
	Let us compute the abelianization of these groups:
	$$
	\begin{array}{|c|c|}
		\hline
		\text{Groups}&\text{Abelianization}\\\hline
		A_5=\GG_{0,(2,3,5)} & \{0\}\\
		S_4=\GG_{0,(2,3,4)}, \GG_{0,(2,2,2k+1)}\,\forall k\geq 1 & \ZZ_2\\
		A_4=\GG_{0,(2,3,3)} & \ZZ_3\\
		\GG_{0,(2,3,6)} & \ZZ_6\\
		\GG_{0,(2,2,2k)}\,\forall k\geq 1 &\ZZ_2\times\ZZ_2\\
		\GG_{0,(3,3,3)} &\ZZ_3\times\ZZ_3\\
		\GG_{0,(2,2,2,2)} &\ZZ_2\times\ZZ_2\times \ZZ_2\\
		\GG_{0,(2,4,4)} &\ZZ_2\times\ZZ_4\\
		\hline
	\end{array}
	$$
	The fundamental group of a curve complement in $\PP^2$ has a finite abelianization if and only
	if the curve is irreducible, in which case the abelianization is cyclic and its order coincides
	with the degree of the curve. This rules out the last four cases in the table above. If the
	abelianization is $\ZZ_2$ (resp. $\ZZ_3$), the curve would need to be an irreducible conic
	(resp. cubic). The fundamental of the complement of irreducible curves of degrees less than four
	is abelian, which rules out all the remaining cases except for~$\GG_{0,(2,3,6)}$.
	
Suppose that there exists a curve $C$ in $\PP^2$ such that $\GG_{0,(2,3,6)}$. Then, $C$ must be an
irreducible sextic. Note that $\GG_{0,(2,3,6)}\cong\langle x,y\mid x^2=y^3=(xy)^6=1\rangle$ surjects
onto the dihedral group of order $6$, sending $x$ to a reflection and $y$ to a rotation.
By~\cite[Theorem 1.1]{Degtyarev-OkaConjectureII} $C$ is of torus-type, i.e., $C=V(f_2^3+f_3^2)$,
where $f_2$ and $f_3$ are homogeneous polynomials of degrees $2$ and $3$ respectively. Since the
generic fiber of the map $F=[f_2^3:f_3^2]:\PP^2\setminus C\to\PP^1\setminus\{[1:-1]\}$ is irreducible,
it induces an epimorphism (see~\cite[Prop. 1.4]{ACM-multiple-fibers})
$F_*:\pi_1(\PP^2\setminus C)\to \pi_1^{\orb}(\PP^1_{1,(2,3)})$, where $\PP^1\setminus\{[1:-1]\}$ is
endowed with the maximal orbifold structure with respect to $F$, in which the points of multiplicity
greater than $1$ are $[0:1]$ (with multiplicity 3) and $[1:0]$ (with multiplicity $2$). Hence,
$$
F_*:\GG_{0,(2,3,6)}\twoheadrightarrow \ZZ_2*\ZZ_3,
$$
and this is impossible, as no such epimorphism exists. Indeed, $\ZZ_2*\ZZ_3$ surjects onto
$\GG_{0,(2,3,6)}$, and $\ZZ_2*\ZZ_3$ is Hopfian, but $\GG_{0,(2,3,6)}$ is not isomorphic
to $\ZZ_2*\ZZ_3$ (for example the commutator group of the first one is isomorphic to $\ZZ^2$, and the
commutator group of the second one is free).
\end{proof}

The following result rules out infinitely many hyperbolic \cco\ groups 
from being the fundamental group of a plane curve complement.

\begin{lemma}\label{lem:pi1CCO}
	Let $\mathbb G_{g,\bar m}$ be a hyperbolic \cco\ group, where $\bar m=(m_1,\ldots,m_n)$, $m_i\geq 2$ for all $i=1,\ldots,n$, and $n,g\geq 0$. 
	Suppose that there exists a plane curve $C\subset\PP^2$ such that $\pi_1(\PP^2\setminus C)\cong \GG_{g,\bar m}$. Then,
	\begin{enumerate}
		\item\label{pi1CCO1} $g=0$ and $n=3$.
		\item\label{pi1CCO2} $C$ is irreducible of degree $d=\gcd(m_1m_2,m_2m_3,m_3m_1)$.
		\item\label{pi1CCO3}\label{pi1CCO4} Up to reordering of the entries of $\bar m$, we may assume that
		$$
		\mathbb G_{g,\bar m}=\langle x,y,z\mid x^{m_1}=y^{m_2}=z^{m_3}=xyz=1\rangle,
		$$
		where $\gcd(m_1,m_2)=1$ and moreover 
		$\mathbb G_{g,\bar m}$ can be generated by $d-2$ conjugates of $z^k$ for some $k\geq 1$ such that $\gcd(k,m_3)=1$ .
	\end{enumerate} 
\end{lemma}
\begin{proof}
	By Theorem~\ref{thm:GOP}, there
	exists an admissible map $F:\PP^2\setminus C\to D$ onto a compact curve $D$ of genus $g$
	such that $F_*:\pi_1(\PP^2\setminus C)\to \pi_1^{\orb}\left(D_{\bar m}\right)$ is an isomorphism, and
	the orbifold structure $\bar m$ is maximal. Since $F$ defines a surjective rational map $F:\PP^2\dashrightarrow D$, then $g=0$ and $D_{\bar m}=\PP^1_{\bar m}$. Note that $H_1(\PP^2\setminus C,\ZZ)$ is a
	finite group, and thus $C$ is irreducible (see part~\eqref{pi1CCO2}) and $H_1(\PP^2\setminus C,\ZZ)\cong\ZZ_d$, where $d$ the
	degree of $C$ \cite[Prop 1.3, Chapter 4]{Dimca-singularities}. Also note that $\chi_{0,\bar m}<0$ implies
	$\bar m=(m_1,\ldots, m_n)$ is such that~$n\geq 3$.
	
	We may extend $F$ to its maximal domain of definition in $\PP^2$, which will be the complement of a finite number of points $\mathcal B$. By \cite[Proposition 2.8]{ji-Libgober-mw}, 
	$F:\PP^2\setminus\mathcal B\to \PP^1$ has at most $2$ multiple fibers. This is only
	possible if $n=3$ and up to reordering of the $m_j$'s, $F(C\setminus\mathcal B)$ is the point
	$P_3\in\PP^1$ of multiplicity $m_3$. In particular, this concludes the proof of part~\eqref{pi1CCO1}. 
	Note also that since $F$ is admissible, the multiplicities $m_1$
	and $m_2$ must be coprime, and thus the first statement of part~\eqref{pi1CCO3} follows.
	
	Since $m_1$ and $m_2$ are coprime, the abelianization of $\GG_{0,(m_1,m_2,m_3)}$ is the cyclic
	group of order $\gcd(m_1m_2,m_3)=\gcd(m_1m_2,m_2m_3,m_3m_1)$, which coincides with the degree $d$ of $C$. This concludes the proof of part~\eqref{pi1CCO2}
	
	Up to change of coordinates in $\PP^1$, $F$ is of the form $[f_{lm_1}^{m_2}:f_{lm_2}^{m_1}]$, where $l\geq 1$ and $f_{lm_i}$ is homogeneous of degree $lm_i$ in $3$ variables for $i=1,2$. Furthermore, we may assume that $C$ lies in the fiber over $[1:-1]$. Let $g$ be a (reduced) defining polynomial of $C$. Then, there exists a non-constant homogeneous polynomial $h$ such that the following quasi-toric relation holds for some $k\geq 1$ such that $\gcd(k,m_3)=1$:
	$$
	f_{lm_1}^{m_2}+f_{lm_2}^{m_1}=h^{m_3}g^k.
	$$
	The conjugacy class in $\pi_1(\PP^2\setminus C)$ of the class of any meridian around $C$ consists of all the classes of  meridians around $C$ with the same orientation \cite[Prop. 1.34]{ji-pau}. Using the isomorphism $$F_*:\pi_1(\PP^2\setminus C)\to\pi_1^{\orb}(\PP^1_{(m_1,m_2,m_3)})\cong\GG_{0,(m_1,m_2,m_3)},$$
	one obtains that $z^k$ is the image of a meridian of $C$.
	
	Note that if $d=1,2$, $\pi_1(\PP^2\setminus C)$ is  finite (and abelian) and thus not a hyperbolic \cco\ group. Hence, part~\eqref{pi1CCO3} is a consequence of the following statement: If $C$ is an irreducible curve of degree $d\geq 3$, then $\pi_1(\PP^2\setminus C)$ can be generated by $d-2$ or less conjugates of $\gamma$, where $\gamma$ is any meridian around $C$. Let us prove the statement. If $C$ is smooth, then $\pi_1(\PP^2\setminus C)$ is abelian (\cite[Cor 3.18, Chapter 4]{Dimca-singularities}) and thus $\pi_1(\PP^2\setminus C)$ is finite cyclic. In particular, $\pi_1(\PP^2\setminus C)$ is generated by one meridian around $C$. If $C$ is not smooth, let $P$ be a singularity of $C$, and after a change of coordinates in $\PP^2$ we may assume that $P=[1:0:0]$. Consider the projection $\PP^2\setminus\{P\}\to\PP^1$ given by $[a:b:c]\to[b:c]$. Note that the closure of every fiber of this projection is a line in $\PP^2$ going through the singular point $P$ which is transversal to $C$ at every  intersection point other than $P$, and these other intersection points are smooth points of $C$. Hence, the generic fibers of this projection are isomorphic to $\PP^1$ minus at most $d-1$ points (namely $P$ and at most $d-2$ smooth points of $C$), and the fundamental group of any of these generic fibers is generated by at most $d-2$ meridians around $C$. Since the map $\PP^2\setminus C\to\PP^1$ is admissible, with no multiple fibers, and $\PP^1$ is simply connected, the result follows from Lemma~\ref{lem:exact}.
\end{proof}

The following result, joint with Lemma~\ref{lem:pi1CCO}, implies that no irreducible curve of degree $d\leq 6$ in $\PP^2$ can be such that the fundamental group of its complement is isomorphic to a hyperbolic \cco\ group. Note that these fundamental groups are classified up to degree $d=5$, but not in degree $6$ or higher. The proof in the case $d=6$ uses Lemma~\ref{lem:someEvenDegree}, which will be proved later, and the characterization of sextics of torus type due to Degtyarev in \cite[Theorem 1.1]{Degtyarev-OkaConjectureII}.

\begin{thm}\label{thm:upToSextic}
	Let $\GG_{0,(m_1,m_2,m_3)}$ be a hyperbolic triangle group, where $$d=\gcd(m_1m_2,m_2m_3,m_3m_1)\leq 6.$$ Then, $\GG_{0,(m_1,m_2,m_3)}$ is not isomorphic to $\pi_1(\PP^2\setminus C)$ for any plane curve $C$.
\end{thm}

\begin{proof}
	We argue by contradiction. Suppose that there exists a curve $C\subset\PP^2$ such that $\pi_1(\PP^2\setminus C)\cong \GG_{0,(m_1,m_2,m_3)}$. By Lemma~\ref{lem:pi1CCO}\eqref{pi1CCO2}, $C$ is irreducible of degree $d$. 
	
	Fundamental groups of complements of irreducible projective plane curves of degrees up to 5 are known to
	be finite with the only exception of the quintic shown in Example~\ref{exam:quintic}
	(see~\cite{Degtyarev-quintic} for the calculation of fundamental groups for degree 5, the other
	cases are well known in the literature), which is not a hyperbolic triangle group: if it was,
	it would satisfy property \NINF\ by Lemma~\ref{lem:FgroupNINF}. This contradicts the fact that
	it has a non-trivial finitely generated normal subgroup of infinite index, namely the central
	subgroup generated by the central element $v$ in the presentation~\eqref{eq:G237}. Hence, we reach a contradiction if $d\leq 5$.
	
	Suppose that $d=6$. By Lemma~\ref{lem:pi1CCO}\eqref{pi1CCO3}, we may reorder the entries of $(m_1,m_2,m_3)$ and assume that $\gcd(m_1,m_2)=1$, so $6\vert m_3$. Hence, up to reordering, $(m_1,m_2,m_3)$ is as in one of these two cases:
	\begin{itemize}
		\item Case 1: $(m_1,m_2,m_3)=(2a,3b,6c)$, where $\gcd(2a,3b)=1=\gcd(ab,c)$ and $a,b,c\geq 1$. Moreover, at least one of $a,b,c$ is $\geq 2$ (or else $\GG_{0,(m_1,m_2,m_3)}$ would not be a hyperbolic triangle group).
		\item Case 2: $(m_1,m_2,m_3)=(6k\pm 1,6b,6c)$, where $\gcd(6k\pm 1,6b)=1=\gcd((6k\pm 1)\cdot b,c)$ and $k,b,c\geq 1$.
	\end{itemize}
	
	Lemma~\ref{lem:someEvenDegree} implies a contradiction in case 2. Finally, assume that case 1 holds. Consider the presentation
	$$
	\GG_{0,(2a,3b,6c)}=\langle x, y, z\mid x^{2a}=y^{3b}=z^{6c}=xyz=1\rangle
	$$
	and the presentation of the dihedral group $D_6$ of order $6$
	$$
	D_6=\langle u, v\mid u^2=v^3=1, uvu=v^{-1}\rangle.
	$$
	Consider the epimorphism $\GG_{0,(2a,3b,6c)}\twoheadrightarrow D_6$ such that $x\mapsto u$, $y\mapsto v$ and $z\mapsto (uv)^{-1}$. By \cite[Theorem 1.1]{Degtyarev-OkaConjectureII}, the existence of this epimorphism implies that the sextic $C$ is of torus type, i.e. $C=V(f_2^3+f_3^2)$ for some homogeneous polynomials $f_2,f_3\in\CC[x,y,z]$ of degrees $2$ and $3$ respectively. In particular the (necessarily admissible) map $[f_2^3:f_3^2]:\PP^2\setminus C\to \PP^1\setminus \{[-1:1]\}$ induces an epimorphism
	$$
	\GG_{0,(2a,3b,6c)}\cong\pi_1(\PP^2\setminus C)\twoheadrightarrow \ZZ_2*\ZZ_3
	$$
	with finitely generated kernel at the level of (orbifold) fundamental groups (see Lemma~\ref{lem:exact}). Since $\GG_{0,(2a,3b,6c)}$ and $\ZZ_2*\ZZ_3$ both satisfy property~\NINF\ by Lemma~\ref{lem:FgroupNINF}, Lemma~\ref{lem:NINF} implies that $\GG_{0,(2a,3b,6c)}\cong \ZZ_2*\ZZ_3$, which contradicts Theorem~\ref{thm:CCOnotFreeProduct}.
\end{proof}

The proof of Theorem~\ref{thm:upToSextic} ends after proving the following.

\begin{lemma}\label{lem:someEvenDegree}
Let $\GG_{0,(m_1,m_2,m_3)}$ be a hyperbolic triangle group, and that, up to reordering,  $(m_1,m_2,m_3)=(a,2db,2dc)$ where $a=2dk\pm 1$, $k,b,c\geq 1$, $d\geq 3$ and $\gcd(a,b)=\gcd(b,c)=\gcd(c,a)=1$. Then, $\GG_{0,(m_1,m_2,m_3)}$ is not isomorphic to $\pi_1(\PP^2\setminus C)$ for any plane curve $C$.
\end{lemma}
\begin{proof}
Let us argue by contradiction. Suppose that there exists a curve $C$ such that $\pi_1(\PP^2\setminus C)$ is isomorphic to such triangle group. By Lemma~\ref{lem:pi1CCO}\eqref{pi1CCO2}, $C$ is irreducible of degree $2d$. By Lemma~\ref{lem:pi1CCO}\eqref{pi1CCO3}, we may reorder $b$ and $c$ to find the following presentation of $\pi_1(\PP^2\setminus C)$:
$$
\langle x,y,z\mid x^{2dk\pm 1}=y^{2db}=z^{2dc}=xyz=1\rangle,
$$
where $z^s$ is a meridian around $C$ for some $s\geq 1$ such that $\gcd(s,2dc)=1$. In particular, $s$ is odd.

In each of the following cases, consider the following group homomorphism $\varphi$ to the symmetric group $S_n$ on $n$ elements:
\begin{itemize}
	\item Case 1: $a=2dk-1$. Then, $n=2dk$ and
	$$
\begin{array}{cccl}
	\varphi: & \mathbb G_{0,(2dk-1,2db,2dc)}&\longrightarrow & S_{2dk}\\
	& x &\longmapsto &\underbrace{(1,2,\ldots,2dk-1)}_{(2dk-1)-\text{cycle}}\\
	& y &\longmapsto & \left(1,2dk,2dk-1,\ldots,2d(k-1)+2\right)\cdot\\
	&&&\left(2,2d(k-1)+1,2d(k-1),\ldots,2d(k-2)+3\right)\cdot\\
	&&&\ldots\cdot\left(k,k+2d-1,k+2d-2,\ldots,k+1\right)\\
	& z &\longmapsto & \left(1,2dk\right)\left(2,2d(k-1)+2\right)\left(3,2d(k-2)+3\right)\ldots\left(k,k+2d\right),
\end{array}
$$
	where the image of $y$ is the product of $k$ disjoint $2d$-cycles, and the image of $z$ is the product of a $k$ disjoint transpositions.
	\item Case 2: $a=2dk+1$. Then, $n=a=2dk+1$, and
	$$
\begin{array}{cccl}
	\varphi: & \mathbb G_{0,(2dk+1,2db,2dc)}&\longrightarrow & S_{2dk+1}\\
	& x &\longmapsto &\underbrace{(1,2,\ldots,2dk+1)}_{(2dk+1)-\text{cycle}}\\
	& y &\longmapsto &  (2,2dk+1,2dk,\ldots,2d(k-1)+3)\cdot\\
	&&& (3,2d(k-1)+2,2d(k-1)+1,\ldots,2d(k-2)+4)\cdot\\
	&&& \ldots\cdot(k+1,k+2d,k+2d-1,\ldots,k+2)\\
	& z &\longmapsto & (1,2)(3,2d(k-1)+3)(4,2d(k-2)+4)\ldots(k+1,k+2d+1),
\end{array}
$$
	where the image of $y$ is the product of $k$ disjoint $2d$-cycles and the image of $z$ is the product of $k$ disjoint transpositions.
\end{itemize}

By Lemma~\ref{lem:pi1CCO}\eqref{pi1CCO3} there exist $\gamma_i\in \mathbb G_{(2dk\pm 1,2db,2dc)}$ conjugate to $z^s$ for $i=1,\ldots, 2d-2$ such that $\gamma_1,\ldots,\gamma_{2d-2}$ generate $\mathbb G_{(2dk\pm 1,2db,2dc)}$. Then, $\varphi(\gamma_1),\ldots,\varphi(\gamma_{2d-2})$ generate $\im(\varphi)$, which is a transitive subgroup of $S_n$.	

Consider the graph with $n$ vertices labeled $1,\ldots,n$, and such that vertices $j$ and $l$ are joined by an edge if $\varphi(\gamma_i)(j)=l$ for some $i=1,\ldots, 2d-2$. Since $\varphi(\gamma_1),\ldots,\varphi(\gamma_{2d-2})$ generate the transitive subgroup $\im(\varphi)$, this graph must be connected. However, since $s$ is odd, $\varphi(\gamma_i)$ contributes the same number of edges to the graph as $\varphi(z)$ would, that is, $k$ edges. Hence, in case 1 (resp. in case 2) the graph has $n=2dk$ (resp. $n=2dk+1$) vertices and $(2d-2)k$  edges, so this graph cannot be connected and we reach a contradiction.
\end{proof}

\begin{cor}\label{cor:Serre}
	The table in Figure~\ref{fig:serre} summarizes the current state of Serre's problem~\ref{prob:serre} for curve orbifold groups.
\end{cor}

\begin{proof}
This problem has a positive answer for open curve orbifold groups if and only if there are at most two orbifold points of coprime multiplicity, that is, 
$\textrm{length}(\bar m)\leq 2$ and $\gcd(m_1,m_2)=1$ (the top-left cell in Figure~\ref{fig:serre}) in Figure. Realizations for this case can be found in~\cite[Thm. 1.2]{ji-Eva-orbifold} and Example~\ref{ex:Oka2}. A proof that this condition is necessary (the top-right cell  in Figure~\ref{fig:serre}) can be found in~\cite[Cor. 3.14]{ji-Eva-orbifold}.
The other positive answer comes for compact curve orbifold groups when either $g=0$ and $\textrm{length}( \bar m)\leq 2$ (finite cyclic groups)
or $g=1$ and no orbifold points ($\ZZ^2$) and it is shown in Theorem~\ref{thm:realizable-spherical-euclidean}.
Negative answers to the problem are shown in Lemma~\ref{lem:pi1CCO}, Theorem~\ref{thm:upToSextic} and Lemma~\ref{lem:someEvenDegree}.
The problem is open for the remaining groups, namely, the leftover hyperbolic triangle groups.
\end{proof}

\bibliographystyle{amsplain}
\bibliography{eva-ji}

\end{document}